\DeclareMathOperator{\im}{im}
\DeclareMathOperator{\coker}{coker}
\DeclareMathOperator{\ob}{ob}
\DeclareMathOperator{\Hom}{Hom}
\DeclareMathOperator{\Mor}{Mor}
\DeclareMathOperator{\Mod}{\R-\sf Mod}
\DeclareMathOperator{\GABan}{(\G,\A)-\sf Ban}
\DeclareMathOperator{\DeckMod}{\Deck(\mathnormal{X})-\sf Mod}
\DeclareMathOperator{\normedmod}{\R-\sf Mod_{\| \cdot\|}}
\DeclareMathOperator{\normedmodone}{\R-\sf Mod_{\| \cdot\|}^1}
\DeclareMathOperator{\normedgmod}{\G-\sf Mod_{\| \cdot\|}}
\DeclareMathOperator{\normedamod}{\A-\sf Mod_{\| \cdot\|}}
  \DeclareMathOperator{\Grp}{{\sf Grp}}
  \DeclareMathOperator{\GrpBan}{{\sf GrpBan}}
  \DeclareMathOperator{\GrpBanc}{{\sf Grp\overline{Ban}}}
  \DeclareMathOperator{\GrppBan}{{\sf Grp^2Ban}}
  \DeclareMathOperator{\GrppBanc}{{\sf Grp^2\overline{Ban}}}
  \DeclareMathOperator{\TopBanc}{{\sf Top\overline{Ban}}}
  \DeclareMathOperator{\ToppBanc}{{\sf Top^2\overline{Ban}}}
  \DeclareMathOperator{\Modgrn}{{\sf \R-Mod_{\ast}^{\|\cdot\|}}}
  \DeclareMathOperator{\RCh}{{\sf{}_{\R}\! Ch}}
  \DeclareMathOperator{\GACh}{{\sf{}_{(\G,\A)}\! Ch^{\|\cdot\|}}}
  \DeclareMathOperator{\Top2}{{\sf Top^2}}
\DeclareMathOperator{\Alt}{Alt}
\DeclareMathOperator{\sgn}{sgn}
\DeclareMathOperator{\sing}{sing}
\DeclareMathOperator{\Deck}{Deck}
\DeclareMathOperator{\id}{id}
  \newcommand{\N}{\ensuremath{\mathbb{N}}}
  \newcommand{\R}{\ensuremath{\mathbb{R}}}
  \newcommand{\G}{\ensuremath{\mathcal{G}}}
  \newcommand{\A}{\ensuremath{\mathcal{A}}}
  \newcommand{\B}{\ensuremath{\mathcal{B}}}
  \newcommand{\HG}{\ensuremath{\mathcal{H}}}
  \newcommand*{\longhookrightarrow}{\ensuremath{\lhook\joinrel\relbar\joinrel\rightarrow}}
 \newcommand{\fa}[1]{\forall_{#1}\;\;\;}
\begin{document}
\title{Relative Bounded Cohomology for Groupoids}
\author{Matthias Blank}
\subjclass[2010]{55N35; 20L05, 18G60, 46M10}
\date{\today}
\renewcommand{\labelenumi}{(\roman{enumi})}
\newtheorem{Satz}{Satz}[section]
\newtheorem{thm}[Satz]{Theorem}
\newtheorem*{thm*}{Theorem}
\newtheorem{thmi}{Theorem}
\newtheorem{prop}[Satz]{Proposition}
\newtheorem{kor}[Satz]{Korollar}
\newtheorem{con}[Satz]{Consequence}
\newtheorem{notation}[Satz]{Notation}
\newtheorem*{Ziel}{Ziel}
\newenvironment{bew}{\begin{proof}[Beweis:]}{\end{proof}}
\theoremstyle{definition}
\newtheorem{qst}[Satz]{Question}
\newtheorem{exa}[Satz]{Example}
\newtheorem{def.}[Satz]{Definition}
\newtheorem*{def.*}{Definition}
\newtheorem*{out*}{Outlook}

\newtheorem*{mot}{Motivation}
\newtheorem*{rec}{Recall}
\newtheorem*{as}{Assumption}
\newtheorem{cor}[Satz]{Corollary}
\newtheorem*{cor*}{Corollary}
\newtheorem{rem}[Satz]{Remark}
\newtheorem{lemma}[Satz]{Lemma}

\begin{abstract}
We introduce bounded cohomology for (pairs of) groupoids and develop homological algebra to deal with it. We generalise results of Ivanov, Frigerio and Pagliantini to this setting and show that (under topological conditions) the bounded cohomology of a pair of topological spaces is isometrically isomorphic to the bounded cohomology of the pair of fundamental groupoids. Furthermore, we prove that bounded cohomology relative to an amenable groupoid is isometrically isomorphic to the bounded cohomology of the ambient groupoid. 
\end{abstract}
\maketitle
\section{Introduction}

Bounded cohomology is a functional-analytic variant of regular cohomology, first developed into an extensive theory with many applications by Gromov~\cite{Gr82}. 
To construct bounded cohomology, instead of looking at general cochains as for (singular or group) cohomology, one only considers \emph{bounded} cochains with respect to the canonical $\ell^1$-norm regarding the basis of simplices in the chain complex. The cohomology of the bounded cochain complex together with the induced semi-norm is then called bounded cohomology and denoted by $H_b^{\ast}$. The importance of the semi-norm can for instance be explained by the fact that, via the Hahn-Banach Theorem, bounded cohomology of a manifold relative to its boundary encompasses the so called \emph{simplicial volume}.  Via this relation, bounded cohomology enters Gromov's proof of Mostow's rigidity theorem~\cite{Gr82, Mu80} and more recently, the streamlined proof by Bessi\`eres, Besson, Boileau, Maillot and Porti of Perelman's geometrisation theorem~\cite{BBBMP10}. Furthermore, bounded cohomology can be used to show various super rigidity results~\cite{MS04,MMS04,CFI14,BF02,BBI13}. Finally, bounded cohomology is 
deeply related to geometric properties of groups and can for example characterise both amenable~\cite{Jo72,No90} and (relatively) hyperbolic groups~\cite{Mi02,Fr15}. 

Bounded cohomology behaves very differently from regular cohomology. Surprisingly for instance, bounded cohomology of a space essentially only depends on the fundamental group:
\begin{thm*}[The Mapping Theorem, {\cite[Theorem 4.1]{I}\cite{Gr82,Bue11}}]
Let $X$ be a connected CW-complex. Then for each $x\in X$, the classifying map induces an isometric isomorphism
 $H_b^{\ast}(X;\R)\longrightarrow H_b^{\ast}(\pi_1(X,x);\R)$.
\end{thm*}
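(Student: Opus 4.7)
The plan is to reduce the theorem to Ivanov's homological algebra for bounded cohomology of discrete groups by computing $H_b^*(X;\R)$ via the universal cover. Let $G=\pi_1(X,x)$ and let $p\colon\widetilde{X}\to X$ denote the universal cover. Since $G$ acts freely and properly on $\widetilde X$, pullback along $p$ induces an isometric isomorphism of bounded cochain complexes $C_b^*(X;\R)\cong C_b^*(\widetilde X;\R)^G$, so it suffices to identify the cohomology of the right-hand side isometrically with $H_b^*(G;\R)$.

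For this identification I would invoke Ivanov's fundamental principle: if $(B^*,\delta)$ is a relatively injective strong resolution of the trivial Banach $G$-module $\R$, then the cohomology of the subcomplex of $G$-invariants computes $H_b^*(G;\R)$ isometrically. So the next step is to equip $C_b^*(\widetilde X;\R)$ with its canonical structure as a complex of Banach $G$-modules and verify the two hypotheses. Relative injectivity of each $C_b^n(\widetilde X;\R)$ follows from a standard averaging argument using that $G$ acts freely on the set of singular $n$-simplices of $\widetilde X$: one builds the required extension operator by choosing representatives of $G$-orbits and averaging fiberwise, and this operator has norm $1$.

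The genuinely hard step is to prove that the augmented complex
\[
0\longrightarrow\R\longrightarrow C_b^0(\widetilde X;\R)\longrightarrow C_b^1(\widetilde X;\R)\longrightarrow\cdots
\]
is a \emph{strong} resolution, i.e.\ admits a contracting chain homotopy by bounded linear (not necessarily $G$-equivariant) operators of norm $\le 1$. This is where simple connectedness of $\widetilde X$ enters in an essential way: one needs, for every singular $n$-simplex $\sigma\colon\Delta^n\to\widetilde X$, a bounded and coherent choice of $(n+1)$-simplex filling the cone on $\sigma$. Following Ivanov, I would construct such a choice combinatorially, using a fixed basepoint of $\widetilde X$ and a system of paths from the basepoint to every point that, thanks to $\pi_1(\widetilde X)=0$, can be assembled into homotopies between any two such paths; iterating yields the required coning operator on singular simplices, whose transpose provides the bounded contracting homotopy on $C_b^*(\widetilde X;\R)$.

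Once these three ingredients are in place, Ivanov's theorem yields a canonical isometric isomorphism $H^*(C_b^*(\widetilde X;\R)^G)\cong H_b^*(G;\R)$, and composing with the identification from the first step gives the desired isometric isomorphism $H_b^*(X;\R)\to H_b^*(G;\R)$. The final bookkeeping is to check that this composition is the map induced by the classifying map $X\to BG$, which follows by functoriality of $H_b^*$ and uniqueness (up to $G$-equivariant chain homotopy) of morphisms between relatively injective strong resolutions. The principal obstacle is the bounded contracting homotopy in the third paragraph; the rest is formal once one has set up the appropriate notion of relative homological algebra for Banach $G$-modules.
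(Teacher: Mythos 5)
Your overall architecture (identify $C_b^\ast(X;\R)$ with the $G$-invariants of $C_b^\ast(\widetilde X;\R)$, check relative injectivity using freeness of the deck action on simplices, then invoke the fundamental lemma of relative homological algebra) is the standard route, and it is essentially how this paper proves its groupoid generalisation in Corollary~\ref{c:absmappingthm}. But the step you yourself single out as the crux is where your argument breaks down. You propose to build the norm-bounded contracting homotopy of $0\to\R\to C_b^0(\widetilde X;\R)\to C_b^1(\widetilde X;\R)\to\cdots$ as the transpose of a coning operator on singular simplices of $\widetilde X$, obtained from a basepoint, a system of paths and ``iterated'' path homotopies using $\pi_1(\widetilde X)=0$. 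This cannot work. First, simple connectedness only lets you fill loops; already at the next stage the coherence you need amounts to filling $2$-spheres, which requires control of $\pi_2$, and no amount of iterating path homotopies supplies it. Second, and more fundamentally, a chain-level coning operator would be a chain contraction of the augmented singular chain complex of $\widetilde X$, which in general does not exist: $\widetilde X$ is simply connected but need not be acyclic (take $X=\R P^2$, so $\widetilde X=S^2$ and $H_2(\widetilde X)\neq 0$). The contracting homotopy of the \emph{bounded cochain} complex is not dual to any chain contraction; it must be constructed directly on $C_b^\ast(\widetilde X;\R)$, and this is exactly the hard content of Ivanov's Theorem~2.4 (cited in this paper as Theorem~\ref{t:ivaloeh}), whose proof goes through a tower of principal $K(\pi_n,n)$-fibrations and averaging over the amenable (abelian) groups $\pi_n$, with Löh's extension to dual twisted coefficients and Bühler's removal of the countability hypothesis. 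Either cite that result, as the paper does, or you owe a genuinely different proof of it; the naive coning sketch is not repairable.

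A secondary gap: the fundamental-lemma machinery by itself only yields a \emph{semi-norm non-increasing} isomorphism $H^\ast\bigl(C_b^\ast(\widetilde X;\R)^G\bigr)\to H_b^\ast(G;\R)$ (compare Corollary~\ref{c:strongrelin}); Ivanov's Theorem~3.6 does not hand you isometry for a particular resolution. To upgrade to an isometric isomorphism you also need a norm non-increasing map in the opposite direction, induced by the explicit ``straightening'' chain map sending a simplex of $\widetilde X$ to the tuple of group elements determined by its vertices (the analogue of $\Phi^X_\ast$ in Proposition~\ref{p:mappingthminvers}); the two norm non-increasing maps are mutually inverse on cohomology by uniqueness in the fundamental lemma, and only then is the isomorphism isometric. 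Your final paragraph asserts the isometry without this reverse map.
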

The mapping theorem and its applications have been one of the hallmarks in the subject. It implies for instance that the simplicial volume of a space with an amenable fundamental group vanishes. In order to study manifolds with boundary, it would be useful to have also a relative version of the mapping theorem. Frigerio and Pagliantini~\cite{FP} have shown such a theorem for pairs of connected spaces under additional topological conditions. One goal of this article is to extend the mapping theorem also to the non-connected setting, so that we can apply the mapping theorem to manifolds with non-connected boundaries:
 \begin{thmi}[Relative Mapping Theorem, Theorem~\ref{t:relmap}]\label{t:intro1} Let $i\colon A\longhookrightarrow X$ be a CW-pair, such that $i$ is $\pi_1$-injective and induces isomorphisms between the higher homotopy groups on each connected component of $A$. Let $V$ be a Banach $\pi_1(X)$-module. Then there is a canonical isometric isomorphism 
\begin{align*}
 H_b^\ast(X,A;V')&\longrightarrow H_b^\ast(\pi_1(X),\pi_1(A);V').
\end{align*}
\end{thmi}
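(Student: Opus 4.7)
The plan is to exhibit strong relatively injective resolutions on both sides of the proposed isomorphism and compare them via the classifying map of pairs, using the homological-algebra machinery for bounded cohomology of groupoids developed earlier in the paper. Concretely, I would model $H_b^\ast(X,A;V')$ by the relative bounded singular cochain complex of the universal cover $(\tilde X, \tilde A)$ equipped with its $\pi_1(X)$-action, and $H_b^\ast(\pi_1(X),\pi_1(A);V')$ by the standard bar resolution of the groupoid pair. The classifying map $(X,A)\to (B\pi_1(X),B\pi_1(A))$ then induces a chain map between the two resolutions, and an application of the fundamental lemma of homological algebra produces an isomorphism at the level of cohomology.

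The key geometric input is a consequence of the hypotheses. By $\pi_1$-injectivity, a connected component of $\tilde A$ lying over a component $A_j\subset A$ is identified with the universal cover $\widetilde{A_j}$. The additional higher-homotopy-group hypothesis then gives $\pi_n(\widetilde{A_j})\cong\pi_n(A_j)\cong\pi_n(X)\cong\pi_n(\tilde X)$ for all $n\ge 2$, so that the inclusion $\widetilde{A_j}\hookrightarrow \tilde X$ is a weak equivalence between simply connected CW-complexes, hence a homotopy equivalence by Whitehead. This is exactly what is needed to verify that the relative singular cochain complex of $(\tilde X,\tilde A)$ provides a strong relatively injective resolution of $V'$ as a Banach module over the groupoid pair $(\pi_1(X),\pi_1(A))$, and it provides the geometric reason why the classifying map induces an iso in the first place.

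The main obstacle I anticipate is the isometric nature of the statement in the non-connected relative setting. In the connected case Ivanov's original argument, and its extension to pairs by Frigerio and Pagliantini, produces the isometry via carefully chosen amenable, norm-preserving constructions; here, because $A$ may have several components, the groupoid $\pi_1(A)$ is a disjoint union of vertex groups, and the resolutions must assemble across components in a way that respects the $\ell^\infty$-norm on cochains as a supremum rather than a sum. Verifying this decomposition, and checking that the classifying-map comparison remains norm-preserving through the passage to the groupoid setting, is where the groupoid framework of the paper should really earn its keep, and is the main labour of the proof.
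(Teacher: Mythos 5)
Your overall architecture matches the paper's: exhibit the topological pair resolution and the bar resolution of the groupoid pair as strong, relatively injective $(\pi_1(X),\pi_1(A))$-resolutions of $V'$, compare them by cochain maps extending the identity, and invoke the fundamental lemma for pairs. But there is a genuine gap at the crucial step. You assert that the homotopy equivalence $\widetilde{A_j}\hookrightarrow \widetilde X$ (obtained from $\pi_1$-injectivity, the hypothesis on higher homotopy groups, and Whitehead) is ``exactly what is needed'' to see that the bounded cochain complex of the pair of universal covers is a \emph{strong} resolution. It is not: strongness requires \emph{norm non-increasing} cochain contractions of $B(C^{\sing}_\ast(\widetilde X;\R),V')$ and $B(C^{\sing}_\ast(\widetilde A;\R),V')$ that are \emph{compatible} with restriction (and can be taken pointed and $\Deck$-equivariant). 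Soft homotopy theory does not produce these: even in the absolute case, simple connectivity of $\widetilde X$ gives a strong contraction only via Ivanov's nontrivial construction (extended to dual coefficients by L\"oh), and the relative, compatible version is precisely the content of Frigerio--Pagliantini's result, which uses the higher-homotopy hypothesis in an essential and delicate way. That this cannot be waved through is underlined by the fact that Park claimed the compatible contractions exist without the higher-homotopy hypothesis, while Pagliantini showed Ivanov's argument does not generalise directly to the relative setting. Your proposal treats the hardest input of the proof as a consequence of Whitehead's theorem.

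Two further points. First, you do not give a mechanism for the isometry: in the paper it comes from having norm non-increasing cochain maps extending $(\id_{V'},\id_{\pi_1(i)^\ast V'})$ in \emph{both} directions --- one produced from the contraction via the relative analogue of the standard lifting argument, the other the explicit map dual to $(\sigma,x)\mapsto(\gamma_{\sigma_0,x},\gamma_{\sigma_1,\sigma_0},\dots)$ --- together with uniqueness up to homotopy from the fundamental lemma; a classifying-map comparison alone only yields an isomorphism, not an isometry, unless you check it is norm non-increasing and invertible by another norm non-increasing map. Second, your assessment that the non-connectedness of $A$ is the main labour is somewhat misplaced: once the connected-case compatible contractions are available, the assembly across components is handled cleanly by pointed $\Deck$-equivariant families of contractions, and the sup-norm bookkeeping is routine; the real work sits in the connected-case analytic input you assumed away.
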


To make sense of the theorems right-hand side for non-connected spaces, this article will introduce bounded cohomology for pairs of groupoids. This is a natural way to define bounded cohomology relative to families of subgroups and allows us to easily extend many results about groups directly to groupoids. In particular, we will develop a homological algebra framework in the spirit of Ivanov to study resolutions that can calculate bounded cohomology for groupoids and show the following:
\begin{thmi}[Corollary~\ref{c:strongrelin}]\label{t:intro2}
 Let $\G$ be a groupoid and $V$ a Banach $\G$-module. Moreover, let~$((D^n,\delta^n_D)_{n\in\N}, \varepsilon \colon V\longrightarrow D^0)$ be a strong relatively injective~\G-resolution of $V$. Then there exists a canonical semi-norm non-increasing isomorphism of graded $\R$-modules
\[
 H^\ast (D^{\ast\G}) \longrightarrow H^\ast_b(\G;V).
\]
\end{thmi}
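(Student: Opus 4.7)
The plan is to follow Ivanov's strategy for groups, adapted to the groupoid setting using the relative homological algebra presumably developed earlier in the paper. I would first single out a canonical strong relatively injective resolution of $V$ that computes $H_b^\ast(\G; V)$ by definition; the natural candidate is the bar/homogeneous resolution $C_b^\ast(\G;V)$, whose $\G$-invariants form the cochain complex defining $H_b^\ast(\G;V)$ together with its quotient semi-norm. Assuming it has already been verified that this resolution is strong and relatively injective (the standard check, essentially as in Ivanov), the theorem becomes a consequence of a general comparison principle.

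Next I would invoke the fundamental lemma of this relative homological algebra: if $E^\ast$ is a strong resolution of $V$ and $F^\ast$ is a relatively injective resolution of $V$, then there is a chain map $E^\ast \to F^\ast$ extending $\id_V$, constructed inductively by using the contracting homotopy of the strong side together with the defining extension property of the injective objects on the right; this chain map is norm non-increasing in each degree, and any two such extensions are $\G$-equivariantly chain homotopic. Applied to the pair $(D^\ast, C_b^\ast(\G;V))$, this produces a canonical (up to chain homotopy) norm non-increasing chain map $\varphi \colon D^\ast \to C_b^\ast(\G;V)$. Applied symmetrically to $(C_b^\ast(\G;V), D^\ast)$, it produces a chain map $\psi \colon C_b^\ast(\G;V) \to D^\ast$. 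By the uniqueness clause, $\psi\circ\varphi$ and $\varphi\circ\psi$ are $\G$-equivariantly chain homotopic to the respective identities, so both are chain-homotopy equivalences.

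Now I would apply the $\G$-invariants functor $(-)^\G$. Since this functor is norm non-increasing and sends equivariant chain homotopies to chain homotopies, $\varphi^\G \colon D^{\ast\G} \to C_b^\ast(\G;V)^\G$ is a norm non-increasing chain map and a chain-homotopy equivalence. Passing to cohomology therefore gives a canonical isomorphism of graded $\R$-modules $H^\ast(D^{\ast\G}) \to H_b^\ast(\G;V)$, and the norm non-increasing property on cochains descends to the quotient semi-norms, yielding the semi-norm non-increasing estimate claimed in the statement. Canonicity follows from the fact that any two valid choices of $\varphi$ are equivariantly chain homotopic, so the induced map on cohomology does not depend on the choice.

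The main obstacle is entirely in the comparison principle of paragraph two, not in the present corollary: one must set up the inductive extension so that the extensions produced by relative injectivity are controlled in norm by the contracting homotopy from the strong side, delivering a chain map of norm at most one in each degree. In the groupoid setting this is more delicate than for groups because the contracting homotopy is only linear over the base (the object algebra $\A$) and not over the whole groupoid, so the relative injectivity property must be correctly formulated to accept such $\A$-linear data and extend it $\G$-equivariantly. Once this machinery is in place, the corollary is a formal consequence, and the norm non-increasing property is automatic from the construction.
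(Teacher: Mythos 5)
Your overall architecture is the right one (and matches the paper's): produce a norm non-increasing cochain map $D^\ast \longrightarrow B(C_\ast(\G),V)$ extending $\id_V$, note that the standard resolution is strong and relatively injective, and use the fundamental lemma's existence-and-uniqueness-up-to-$\G$-homotopy to conclude that the induced map $H^\ast(D^{\ast\G})\longrightarrow H^\ast_b(\G;V)$ is a canonical isomorphism. However, there is a genuine gap exactly at the point you yourself flag and then wave away: the claim that the comparison map produced by the fundamental lemma (i.e.\ by the relative injectivity extension property of the target) is norm non-increasing in each degree. This is not true, and it is not how the norm control is obtained. In the inductive construction of the fundamental lemma, the map one extends at stage $n$ is built from the previously constructed map composed with the coboundary operators of the complexes (and/or the contraction), and the extension property only guarantees $\|\beta\|_\infty\leq\|\alpha\|_\infty$ for that single extension step. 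Since the coboundaries of the bar complex have operator norm roughly $n+1$, the resulting cochain map is merely bounded, with norms that may grow with the degree. Relative injectivity of $B(C_n(\G),V)$ therefore cannot deliver the semi-norm non-increasing statement; if it could, the same argument run in the opposite direction would make the comparison an isometry, which is false in general.

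The paper obtains the norm bound by a separate, explicit Ivanov-style construction (its Proposition on norm non-increasing maps into the standard resolution): given a norm non-increasing contraction $s^\ast$ of $(D^\ast,\varepsilon)$, one defines inductively
\[
\alpha^n_{t(g_0)}(\varphi)(g_0,\dots,g_n) \;=\; \alpha^{n-1}_{t(g_0)}\bigl(g_0\cdot s^{n}_{s(g_0)}(g_0^{-1}\cdot \varphi)\bigr)(g_0\cdot g_1,\dots,g_n),
\]
which uses only the concrete structure of $B(C_\ast(\G),V)$ (an element is a function on tuples, so one can peel off the first entry) and the isometry of the $\G$-action; no coboundary ever enters, so $\|\alpha^n\|_\infty\leq 1$ by induction. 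Relative injectivity plays no role in this direction; it is used only for $D^\ast$, via the fundamental lemma applied to the strong resolution $B(C_\ast(\G),V)$, to get a map back and to conclude that the induced map on cohomology is an isomorphism independent of choices. So your proof becomes correct once you replace ``the fundamental lemma gives a norm non-increasing chain map'' by this explicit construction into the standard resolution; as written, the key quantitative step is unproved and, as a general statement about strong-to-injective comparisons, false.
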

Using this result, we can straightforwardly generalise the mapping theorem to groupoids: 
\begin{thmi}[Absolute Mapping Theorem for Groupoids, Corollary~\ref{c:absmappingthm}]\label{t:intro3}
Let $X$ be a CW-complex and let $V$ be a Banach~$\pi_1(X)$-module. Then there is a canonical isometric isomorphism of graded semi-normed $\R$-modules
\begin{align*}
 H_b^{\ast}(X;V')\longrightarrow H_b^{\ast}(\pi_1(X);V')
\end{align*}
\end{thmi}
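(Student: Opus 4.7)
The plan is to apply Theorem~\ref{t:intro2} to a canonical $\pi_1(X)$-equivariant lift of the bounded singular cochain complex of~$X$, and then to upgrade the resulting semi-norm non-increasing isomorphism to an isometric one via an inverse comparison map.

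For each connected component $X_\alpha$ of $X$, I would fix a basepoint and form the universal cover $p_\alpha\colon \widetilde{X_\alpha}\to X_\alpha$. The bounded singular cochain complex $C_b^{\ast}(\widetilde{X_\alpha};V')$ carries the natural deck action, and since the fundamental groupoid $\pi_1(X)$ is (equivalent to) the disjoint union of the vertex groups $\pi_1(X_\alpha)$, these assemble into a single Banach $\pi_1(X)$-cochain complex~$D^{\ast}$. Pullback along $p_\alpha$ identifies the deck-invariant bounded cochains on $\widetilde{X_\alpha}$ isometrically with the bounded cochains on $X_\alpha$, so the invariants $D^{\ast\pi_1(X)}$ are isometrically isomorphic to $C_b^{\ast}(X;V')$, giving $H^{\ast}(D^{\ast\pi_1(X)})=H_b^{\ast}(X;V')$ as graded semi-normed $\R$-modules.

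The next step would be to verify that $D^{\ast}$, augmented by the inclusion of locally constant cochains $V'\longrightarrow C_b^0(\widetilde{X};V')$, is a strong relatively injective $\pi_1(X)$-resolution of $V'$. Relative injectivity of each $D^n$ follows from the standard fact that $\ell^\infty$-type modules are relatively injective, transported to the groupoid setting using the framework developed earlier in the paper. Invoking Theorem~\ref{t:intro2} then produces a canonical semi-norm non-increasing isomorphism $H_b^{\ast}(X;V')\to H_b^{\ast}(\pi_1(X);V')$. To upgrade this to an isometry, I would exhibit an explicit semi-norm non-increasing chain map from a standard bar-type strong relatively injective resolution computing $H_b^{\ast}(\pi_1(X);V')$ into $D^{\ast\pi_1(X)}$; the fundamental lemma of relatively injective resolutions (a consequence of Theorem~\ref{t:intro2} and its proof) forces this to be inverse on cohomology to the previous map, whence both are norm-preserving.

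The main obstacle will be establishing the strong acyclicity of $D^{\ast}$ via a norm-one contracting chain homotopy. When $X$ is aspherical this is immediate from contractibility of $\widetilde{X}$, but in general the universal cover has non-trivial higher homotopy and one must rely on the fact that higher homotopy is invisible to bounded cohomology. The classical Ivanov construction produces a bounded contracting homotopy on the singular bounded cochain complex via partitions of unity and barycentric operations on singular simplices; the task here is to check that this procedure can be performed compatibly on each component and assembles into a $\pi_1(X)$-equivariant contracting homotopy on $D^{\ast}$ in the groupoid sense formalised earlier in the paper.
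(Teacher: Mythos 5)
Your proposal follows essentially the same route as the paper: assemble the bounded singular cochains of the universal covers into a Banach $\pi_1(X)$-cochain complex whose invariants recover $C_b^{\ast}(X;V')$, show it is a strong relatively injective resolution (with strongness supplied by Ivanov's contraction made pointed and equivariant), and then combine the norm non-increasing comparison map from Theorem~\ref{t:intro2} with an explicit norm non-increasing map in the opposite direction so that the fundamental lemma forces both to be isometric. This matches the paper's proof via Corollary~\ref{c:strongreltop}, Proposition~\ref{p:normnonincr} and Proposition~\ref{p:mappingthminvers}.
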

Similarly, we study pairs of resolutions that can calculate bounded cohomology of pairs of groupoids, show a fundamental lemma in this setting and derive the following:
\begin{thmi}[Corollary~\ref{c:relviastronginj}]\label{t:intro4}
 Let $i\colon \A\longrightarrow \G$ be a groupoid pair and~$V$ a Banach $\G$-module. Let $(C^\ast,D^\ast,\varphi^\ast,(\nu,\nu'))$ be a strong, relatively injective $(\G,\A)$-resolu\-tion~of $V$. Then there exists a canonical, semi-norm non-increasing isomorphism of graded $\R$-modules.
\[
 H^\ast(C^\ast,D^\ast,\varphi^\ast) \longrightarrow H^\ast_b(\G,\A;V).
\]
\end{thmi}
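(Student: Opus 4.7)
The plan is to imitate the strategy used in the absolute case (Theorem~\ref{t:intro2} / Corollary~\ref{c:strongrelin}) while carrying along the additional bookkeeping that the pair structure demands. By construction, $H^\ast_b(\G,\A;V)$ is computed from a specific, canonical strong, relatively injective $(\G,\A)$-resolution of $V$, which I shall denote by $(\bar C^\ast,\bar D^\ast,\bar\varphi^\ast,(\bar\nu,\bar\nu'))$. It therefore suffices to produce a canonical, semi-norm non-increasing isomorphism from $H^\ast(C^\ast,D^\ast,\varphi^\ast)$ to $H^\ast(\bar C^\ast,\bar D^\ast,\bar\varphi^\ast)$.

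The first step is to apply the pair version of the fundamental lemma, established earlier in the article. Since $(C^\ast,D^\ast,\varphi^\ast)$ is strong and $(\bar C^\ast,\bar D^\ast,\bar\varphi^\ast)$ is relatively injective, this provides a morphism of $(\G,\A)$-resolutions
\[
 (f^\ast,g^\ast)\colon (C^\ast,D^\ast,\varphi^\ast)\longrightarrow(\bar C^\ast,\bar D^\ast,\bar\varphi^\ast)
\]
extending the identity on $V$. Because each inductive step in such an extension proceeds via relative injectivity of the target, the components $f^\ast$ and $g^\ast$ can be arranged to be norm non-increasing in every degree. Passing to the appropriate invariants and the relative cochain complex underlying the pair cohomology, this yields a semi-norm non-increasing morphism of cochain complexes, and hence a semi-norm non-increasing map on $H^\ast$.

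To see that the induced map is an isomorphism, I would invoke the fundamental lemma once more with the roles swapped: $(\bar C^\ast,\bar D^\ast,\bar\varphi^\ast)$ is itself strong and $(C^\ast,D^\ast,\varphi^\ast)$ is relatively injective, so there exists a morphism $(\bar f^\ast,\bar g^\ast)$ in the opposite direction also extending $\id_V$. The uniqueness clause of the fundamental lemma forces both compositions to be chain homotopic to the respective identities, since any two extensions of $\id_V$ through these resolutions are chain homotopic and the identity is one such extension. Consequently the two induced maps on cohomology are mutually inverse isomorphisms, and the same uniqueness statement makes them independent of the chosen $(f^\ast,g^\ast)$, so the isomorphism is canonical.

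The main difficulty is not in this cohomological comparison itself but in the pair version of the fundamental lemma it relies on: one must ensure that the constructed morphisms and chain homotopies are simultaneously compatible with the $\G$-action on $C^\ast$, the $\A$-action on $D^\ast$, and the intertwining map $\varphi^\ast$, and that all constructions remain norm-controlled when descending to the relative cochain complex defining $H^\ast_b(\G,\A;V)$. Once that technical preparation is in place, the present corollary follows the formal pattern of its absolute counterpart with no further surprise.
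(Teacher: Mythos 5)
Your overall architecture (compare the given resolution with the standard resolution $C^\ast(\G,\A;V)$, use the fundamental lemma for pairs in both directions, and use its uniqueness clause for canonicity and invertibility on cohomology) does reproduce the part of the paper's argument that yields a \emph{canonical isomorphism}. The genuine gap is in the norm control. You assert that, because each inductive step of the extension in the fundamental lemma uses relative injectivity of the target, the components $f^\ast$ and $g^\ast$ ``can be arranged to be norm non-increasing in every degree.'' Relative injectivity only gives $\|\beta\|_{\infty}\leq\|\alpha\|_{\infty}$ for the \emph{particular} map $\alpha$ being extended at that step; in the inductive construction of a cochain map the map to be extended in degree $n$ is built from the coboundary operators and the previously constructed components, and the coboundaries are not norm non-increasing (for the Bar-type resolutions one only has $\|\delta^n\|_\infty\leq n+1$). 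So the fundamental lemma, as proved in the paper (Proposition~\ref{p:fundamentalpairs}), produces a \emph{bounded} $(\G,\A)$-cochain map whose norms may grow with the degree; it does not yield a semi-norm non-increasing map on cohomology, and this is exactly the point where your argument breaks down.

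The paper closes this gap by a separate, explicit construction: Proposition~\ref{p:normnonincrpair} (the pair version of Proposition~\ref{p:normnonincr}, i.e.\ of Ivanov's argument) shows that for each norm non-increasing cochain contraction of the strong resolution $(C^\ast,D^\ast,\varphi^\ast,(\nu,\nu'))$ there is a canonical \emph{norm non-increasing} $(\G,\A)$-cochain map to the standard resolution $C^\ast(\G,\A;V)$ extending $(\id_V,\id_{i^\ast V})$. This construction uses the concrete form of $B(C_\ast(\G),V)$ (the values of the constructed map are defined degreewise by an explicit formula involving the contraction and the isometric $\G$-action), not relative injectivity of the target. Only after this does the fundamental lemma for pairs enter, and only to conclude that the induced map in cohomology is an isomorphism independent of the chosen lift. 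Your second application of the fundamental lemma (with the roles of the two resolutions swapped) is a legitimate alternative way to see invertibility, but it cannot substitute for Proposition~\ref{p:normnonincrpair} where the isometric/norm non-increasing statement is concerned. To repair your proof, replace the claim about the fundamental lemma producing norm non-increasing maps by an appeal to (or a proof of) the explicit construction of Proposition~\ref{p:normnonincrpair}, checking that the two componentwise Ivanov-type maps are compatible with $\varphi^\ast$ and $B(C_\ast(i),V)$ so that they form a $(\G,\A)$-map.
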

Finally, we will study amenable groupoids, show that they can be characterised in terms of bounded cohomology (Corollary~\ref{c:amenablevanish} and Proposition~\ref{p:amenablebounded}) and prove the following theorem:
\begin{thmi}[Algebraic Mapping Theorem, Corollary~\ref{c:algebraicmappingthm}]\label{t:intro5}
 Let $i\colon \A\longhookrightarrow\G$ be a pair of groupoids such that $\A$ is amenable. Let $V$ be Banach $\G$-module. Then for each $n\in \N_{\geq 2}$ there is a canonical isometric isomorphism
\[
H_b^n(\G,\A;V') \longrightarrow H_b^n(\G;V').
\]

\end{thmi}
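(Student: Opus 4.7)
The plan is to derive the statement from the long exact sequence in bounded cohomology of the pair $(\G,\A)$, combined with the vanishing of the bounded cohomology of the amenable groupoid $\A$ established in Corollary~\ref{c:amenablevanish}. The classical analogue for groups (Gromov, Noskov, Frigerio--Pagliantini) proceeds in exactly this way, and the machinery developed in the preceding sections of the paper is designed to make it work verbatim in the groupoid setting.

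First, I would exhibit the long exact sequence. Choose a strong relatively injective $\G$-resolution $C^\ast$ of $V$ and a strong relatively injective $\A$-resolution $D^\ast$ of $V$, and assemble them into a strong relatively injective $(\G,\A)$-resolution $(C^\ast,D^\ast,\varphi^\ast,(\nu,\nu'))$ with $\varphi^\ast$ induced by the restriction $i^\ast\colon \G\to \A$. By Theorems~\ref{t:intro2} and~\ref{t:intro4} these resolutions compute $H_b^\ast(\G;V')$, $H_b^\ast(\A;V')$ and $H_b^\ast(\G,\A;V')$ respectively. Since the pair complex is, up to a shift, the mapping cone of $\varphi^{\ast\G}\colon C^{\ast\G}\to D^{\ast\A}$, there is a tautological short exact sequence of cochain complexes that yields the long exact sequence
\[
\cdots \to H_b^{n-1}(\A;V') \to H_b^n(\G,\A;V') \to H_b^n(\G;V') \to H_b^n(\A;V') \to \cdots
\]
in which the middle map is the one in the statement.

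Next, I would invoke amenability: by Corollary~\ref{c:amenablevanish} one has $H_b^n(\A;V')=0$ for every $n\geq 1$, so for every $n\geq 2$ both neighbouring groups in the exactness sequence vanish and the canonical map is an isomorphism of $\R$-modules. The projection from the cone to $C^{\ast\G}$ is semi-norm non-increasing with respect to the max-norm, so the induced map is also semi-norm non-increasing on cohomology. For the reverse estimate, I would use Proposition~\ref{p:amenablebounded}, which should produce a norm-one equivariant primitive: given a cocycle $\alpha\in C^{n\G}$ representing $[\alpha]$, its restriction $\varphi^n(\alpha)\in D^{n\A}$ is an $\A$-cocycle and the contracting homotopy furnishes $\beta\in D^{n-1\,\A}$ with $\delta_D\beta=\varphi^n(\alpha)$ and $\|\beta\|\leq \|\varphi^n(\alpha)\|\leq \|\alpha\|$. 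Then $(\alpha,-\beta)$ is a cocycle of the pair complex with max-norm equal to $\|\alpha\|$ whose image under the projection is $\alpha$. Passing to infima over representatives on both sides proves the desired isometry.

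The main obstacle is the norm-sharp control in the last step: one must ensure that Proposition~\ref{p:amenablebounded} really delivers an $\A$-equivariant primitive $\beta$ whose norm is bounded by $\|\varphi^n(\alpha)\|$ (and not merely by a universal multiple of it), and that the pair complex is normed by the genuine max-norm so that $\|(\alpha,-\beta)\|=\|\alpha\|$ holds on the nose. A secondary point is the careful bookkeeping of the $\G$- versus $\A$-actions on $V'$ when restricting and applying the amenability machinery. The hypothesis $n\geq 2$ is essential, since in degree one the left neighbour $H_b^0(\A;V')$ of the long exact sequence need not vanish, so injectivity would fail.
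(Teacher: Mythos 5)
Your first half (long exact sequence plus Corollary~\ref{c:amenablevanish} gives an isomorphism for $n\geq 2$, and the comparison map is semi-norm non-increasing) matches the paper. The second half, however, has a genuine gap exactly at the point you flag as the ``main obstacle''. The cochain contraction of a strong resolution is only $\R$-linear, \emph{not} $\A$-equivariant, so applying it to the restricted cocycle $\varphi^n(\alpha)$ produces a primitive in $D^{n-1}$ but not in the invariants $D^{(n-1)\A}$; and Proposition~\ref{p:amenablebounded}, which you invoke, is the converse characterisation of amenability (vanishing of $H_b^1$ with coefficients in $(\Sigma\R_\G)'$ implies amenability) and gives no norm-controlled equivariant primitive at all. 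What you actually need is that an amenable groupoid has \emph{isometrically} vanishing bounded cohomology in positive degrees, i.e.\ an invariant primitive of norm at most $\|\varphi^n(\alpha)\|$; this requires averaging with the equivariant mean, and that is precisely the content of the paper's Section~6: the operator $\Alt^\ast_{V'}\circ A^\ast_{\A}$ is a norm non-increasing $\G$-cochain map extending $\id_{V'}$ which, by Proposition~\ref{p:meanfactorizes}, factors through the relative complex, and by the fundamental lemma (Proposition~\ref{t:fundamental}) induces the identity on $H^\ast_b(\G;V')$; composing with the inclusion then forces the isomorphism to be isometric. Your plan omits this construction and cannot be repaired by Corollary~\ref{c:amenablevanish} alone, since mere vanishing gives no control on the norm of the primitive.

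A second, independent problem is your identification of the relative complex with the mapping cone of $\varphi^{\ast\G}$ carrying the max-norm. The paper defines $H^n_b(\G,\A;V')$ as the cohomology of the \emph{kernel} of the restriction map, with the semi-norm induced from $C^n_b(\G;V')$. The kernel and the cone have canonically isomorphic cohomology, but the comparison of the two semi-norms is not free -- it is essentially of the same depth as the isometry you are trying to prove. Concretely, to turn your cone cocycle $(\alpha,-\beta)$ into a kernel cocycle you must replace $\alpha$ by $\alpha-\delta\tilde\beta$ for some invariant extension $\tilde\beta$ of $\beta$ to $\G$, and $\|\alpha-\delta\tilde\beta\|$ is only bounded by $\|\alpha\|+(n+1)\|\tilde\beta\|$, destroying the norm estimate. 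So even granting the equivariant primitive, your final step does not establish the isometry for the semi-norm the statement refers to; the paper's route via the factorisation $B(L_n(\G),V')\longrightarrow K^n_L(\G,\A;V')$ of $\Alt^n_{V'}\circ A^n_\A$ avoids this issue because it lands directly in the kernel complex without increasing norms. (A minor further point: assembling arbitrary strong relatively injective $\G$- and $\A$-resolutions into a relatively injective $(\G,\A)$-resolution is not automatic -- this is the Park subtlety the paper discusses -- but for the long exact sequence the standard complexes suffice, so this does not affect the isomorphism part of your argument.)
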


\subsection*{Structure of the Article}
In \emph{Section 2}, we repeat basic facts about grou\-poids. Then, in \emph{Section 3} we introduce bounded cohomology for (pairs of) groupoids and show that this is an additive homotopy invariant and gives raise to a long exact sequence of bounded cohomology groups. In \emph{Section 4}, we develop relative homological algebra in our setting, introduce strong, relatively injective resolutions over (pairs of) groupoids and derive Theorem~\ref{t:intro2} and~\ref{t:intro4}. Then, in \emph{Section 5} we discuss amenable groupoids before proving in \emph{Section 6}  the algebraic mapping theorem.  In \emph{Section 7}, we deal with resolutions coming from topological spaces. We use this in \emph{Section~8} to slightly generalise the definition of bounded cohomology with twisted coefficients for pairs of topological spaces and derive the absolute mapping theorem. Finally, in \emph{Section 9}, we prove the relative mapping theorem. 
\subsection*{Acknowledgments}
At the time of the writing of this article, the author has been supported by the SFB 1085 \emph{Higher Invariants} (funded by the DFG) at the University of Regensburg. The article is based on parts of the authors Ph.D. thesis~\cite{Bl14} at the University of Regensburg, partially supported by the GRK 1692 \emph{Curvature, Cycles and Cohomology} (also funded by the~DFG). 
\section{Groupoids}
\subsection{Basics about Groupoids}
Groupoids are a generalisation of groups (and group actions), akin to considering not necessarily connected spaces in topology. They can be viewed as group-like structures where composition is only partially defined. 

For the convenience of the reader, in this section we repeat some basic facts and definitions about groupoids. We refer to the book of Ronald Brown~\cite{RB} for a more detailed overview. 
\begin{def.}\label{d:groupoids}
 \hfill 
\begin{enumerate}
 \item A \emph{groupoid} is a small category in which every morphism is invertible. 
\item A functor between groupoids is also called a \emph{groupoid map}.\label{i:d:groupoidsi} 
\item We will write $\Grp$ for the category of groupoids with groupoid maps as morphisms. 
\item In analogy to the group case, we consider morphisms as \emph{elements} of the groupoid. Hence, if $\G$ is a groupoid, we write $g\in \G$ to indicate that~$g\in \amalg_{e,f\in \ob \G} \Mor_{\G}(e,f)$ is a morphism in $\G$. 
\item A groupoid map is called \emph{injective/surjective} if it is injective/surjective on both objects and morphisms.
\label{i:d:groupoidsiv} 
\item Suppose $f,g\colon \G\longrightarrow \HG$ are groupoid maps. A natural equivalence between~$f$ and $g$ is also called a \emph{homotopy between $f$ and $g$}. If such a homotopy exists, we sometimes write $f\simeq g$. 
\item A groupoid $\G$ is called \emph{connected}, if for each pair $i,j\in \ob \G$ there exists at least one morphism from $i$ to $j$ in $\G$ (that is, if the underlying graph of the category $\G$ is connected). Similarly, we get the notion of \emph{connected components} of a groupoid. 
\end{enumerate}
\end{def.}
\newpage
\begin{exa}
 \hfill
\begin{enumerate}
 \item A group is naturally a groupoid with exactly one object. More precisely, we can identify the category of groups with the full subcategory of the category of groupoids, having the vertex set $\{1\}$.
\item In this sense, notions~\ref{i:d:groupoidsi} to~\ref{i:d:groupoidsiv} in Definition~\ref{d:groupoids} correspond to the concepts for groups. Two group homomorphisms~$f,g \colon G\longrightarrow H$ are homotopic if and only if there exists an inner automorphism $\alpha$ of $H$, such that $\alpha\circ f= g$. 
\item Given a family $(\G_i)_{i\in I}$ of groupoids, the \emph{disjoint union of $(\G_i)_{i\in I}$} is the groupoid $\amalg_{i\in I} \G_i$, defined by setting $\ob  \amalg_{i\in I} \G_i:= \amalg_{i\in I} \ob \G_i$ and for all $k,l\in I$, all $e\in \ob \G_k$ and all $f\in \ob \G_l$
\[
\Mor_{\amalg_{i\in I}\G_i}(e,f) := \begin{cases}
                                    \Mor_{\G_k}(e,f)&\text{if } k=l\\
				    \emptyset &\text{else,}
                                   \end{cases}
\]
together with the composition induced by the compositions of $\G_i$.
 In this fashion, we can view a family of groups naturally as a groupoid.  
\end{enumerate}
\end{exa}

\begin{exa}[Group Actions and Groupoids]\label{e:actiongrp}
 Let $G$ be a group and $X$ a set with a left $G$-action. We define a groupoid $G\ltimes X$, called \emph{the action groupoid or the semi-direct product of $X$ and $G$}, by setting:
\begin{enumerate}
 \item The objects of $G\ltimes X$ are given by $\ob G\ltimes X= X$.
\item For each $e,f\in X$, set $\Mor_{G\ltimes X}(e,f) =\{(e,g)\in X\times G\mid g\cdot e =f\}$.
\item Define the composition by setting for each $x\in X$ and $g,h\in G$
\[
 (g\cdot x,h)\circ (x,g) = (x,h\cdot g). 
\]
\end{enumerate}
\end{exa}

Since we view groupoids as groups where the composition is only partially defined, in the sense that we can only compose two elements if the target of the first matches the source of the second, it will be useful to define:
\begin{def.}
Let $\G$ be a groupoid. We define the \emph{source} and {target} maps by
\begin{align*}
 s\colon\G &\longrightarrow \ob \G\\
 \Mor_{\G}(e,f)\ni g &\longmapsto e\\
 t\colon\G &\longrightarrow \ob \G\\
 \Mor_{\G}(e,f)\ni g &\longmapsto f.
\end{align*}
\end{def.}

As the next theorem shows, up to homotopy we can actually always restrict to disjoint unions of groups:
\begin{thm}[Classifying groupoids up to homotopy]\label{t:classify}
 Let $\G$ be a groupoid and $i\colon \HG \longrightarrow \G$ be the inclusion of a full subgroupoid meeting each connected component of $\G$. Then there exists a groupoid map $p\colon \G \longrightarrow \HG$, such that 
\[
 p\circ i = \id_\HG \quad \text{ and } \quad i\circ p \simeq \id_\G.
\]
In particular, $\HG$ and $\G$ are equivalent. 
\end{thm}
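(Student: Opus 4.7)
The plan is to construct $p$ by choosing, for each object of $\G$, a witness in $\HG$ together with a connecting morphism, and then transport every arrow of $\G$ into $\HG$ by conjugation.

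More concretely, since $\HG$ meets each connected component of $\G$, for every $e\in\ob\G$ I can pick an object $p(e)\in\ob\HG$ in the same component as $e$; I make these choices so that $p(e)=e$ whenever $e\in\ob\HG$. Then, for each $e\in\ob\G$, I pick a morphism $\eta_e\colon e\longrightarrow p(e)$ in $\G$, again with the normalisation $\eta_e=\id_e$ whenever $e\in\ob\HG$. Both choices use the axiom of choice. On morphisms, I define for $g\colon e\longrightarrow f$ in $\G$
\[
 p(g) := \eta_f\circ g\circ \eta_e^{-1}\colon p(e)\longrightarrow p(f).
\]
Because $p(e),p(f)\in \ob\HG$ and $\HG$ is a \emph{full} subgroupoid, $p(g)$ indeed lies in $\HG$. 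A direct calculation shows that $p$ preserves identities (using $\eta_e\circ \id_e\circ \eta_e^{-1}=\id_{p(e)}$) and composition (the middle $\eta$'s cancel), so $p\colon \G\longrightarrow \HG$ is a groupoid map.

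To verify $p\circ i=\id_\HG$, note that on objects of $\HG$ we have $p(e)=e$ by construction, and on a morphism $g\colon e\longrightarrow f$ in $\HG$ we compute $p(g)=\id_f\circ g\circ \id_e^{-1}=g$. For the homotopy $i\circ p\simeq \id_\G$, the family $(\eta_e)_{e\in\ob\G}$ furnishes a natural transformation $\id_\G\Longrightarrow i\circ p$: for any $g\colon e\longrightarrow f$ in $\G$,
\[
 p(g)\circ \eta_e = \eta_f\circ g\circ \eta_e^{-1}\circ \eta_e = \eta_f\circ g,
\]
which is precisely the naturality square. As every morphism in a groupoid is invertible, this natural transformation is automatically a natural equivalence, i.e.\ a homotopy in the sense of Definition~\ref{d:groupoids}. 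The final claim that $\HG$ and $\G$ are equivalent is then immediate from the two relations $p\circ i=\id_\HG$ and $i\circ p\simeq \id_\G$.

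The only non-routine aspect is the bookkeeping ensuring that $p$ is a functor and that the chosen $\eta_e$ simultaneously normalise $p\circ i=\id_\HG$ on the nose (not merely up to homotopy); this is handled by the stipulation $\eta_e=\id_e$ on $\ob\HG$, which is compatible with fullness of $\HG\subseteq \G$. No genuine obstacle appears, so the argument is essentially a careful application of the axiom of choice inside the standard equivalence-of-categories argument, adapted to the groupoid setting.
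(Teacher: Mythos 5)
Your argument is correct and complete: the choice of witnesses $p(e)$ and connecting morphisms $\eta_e$ normalised to identities on $\ob\HG$, conjugation on morphisms, and the observation that $(\eta_e)_{e\in\ob\G}$ is a natural equivalence (automatically invertible in a groupoid) is exactly the standard proof of this statement. The paper itself only cites~\cite[Theorem 3.1.7]{Bl14} rather than giving a proof, and the argument given there is essentially the same as yours, so there is nothing further to compare.
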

\begin{proof}
 This is a well-known result~\cite[Theorem 3.1.7]{Bl14}.
\end{proof}
\newpage
\begin{cor}\label{c:grphequiv}
\hfill
\begin{enumerate}
\item Two groupoids $\HG$ and $\G$ are equivalent if and only if there is a bijection $\alpha\colon \Lambda(\G)\longrightarrow \Lambda(\HG)$, such that for all $e\in \Lambda\G$ the vertex groups $\G_e$ and $\HG_{\alpha(e)}$ are isomorphic. Here, we write $\Lambda(\G)$ and $\Lambda(\HG)$ to denote a choice of exactly one vertex in each connected component of $\G$ and $\HG$ respectively. 
 \item In particular: Every connected non-empty groupoid is equivalent to any of its vertex groups (which coincide up to isomorphism). 
\end{enumerate}
\end{cor}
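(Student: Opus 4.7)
The plan is to derive both parts from Theorem~\ref{t:classify}. Part~(ii) is essentially immediate: if $\G$ is connected and non-empty, pick any $v\in \ob \G$; the full subgroupoid on $\{v\}$ coincides with the vertex group $\G_v$ and meets every connected component of $\G$, so Theorem~\ref{t:classify} directly yields an equivalence $\G_v \simeq \G$. The key observation for part~(i) is the analogous statement that for any groupoid $\G$, the disjoint union $\amalg_{e\in\Lambda(\G)} \G_e$, viewed as the full subgroupoid on the vertex set $\Lambda(\G)$, meets every connected component of $\G$, so Theorem~\ref{t:classify} furnishes an equivalence $\G \simeq \amalg_{e\in\Lambda(\G)} \G_e$, and similarly for $\HG$.

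For the ``if'' direction of (i), given the bijection $\alpha\colon\Lambda(\G)\to\Lambda(\HG)$ together with isomorphisms $\varphi_e \colon \G_e \to \HG_{\alpha(e)}$, I would assemble these into a strict isomorphism of groupoids
\[
\amalg_{e\in\Lambda(\G)} \G_e \;\cong\; \amalg_{f\in\Lambda(\HG)} \HG_{f},
\]
and then compose with the two equivalences obtained from Theorem~\ref{t:classify} to conclude $\G \simeq \HG$.

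For the ``only if'' direction, suppose $\G \simeq \HG$. Since equivalences of categories preserve and reflect isomorphisms between objects, and since in a groupoid two objects lie in the same connected component precisely when they are isomorphic, such an equivalence induces a bijection on connected components. Combining this with the equivalences $\G \simeq \amalg_e \G_e$ and $\HG \simeq \amalg_f \HG_f$ produces the required bijection $\alpha\colon\Lambda(\G)\to\Lambda(\HG)$ together with a groupoid equivalence that restricts, component by component, to an equivalence $\G_e \simeq \HG_{\alpha(e)}$ between one-object groupoids. The last step is to upgrade this to an isomorphism of the underlying groups, which is automatic: a fully faithful functor between one-object groupoids is precisely a bijective group homomorphism, i.e., an isomorphism.

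The main technical point to handle carefully is the claim that an equivalence of groupoids does restrict to equivalences between corresponding connected components. This boils down to checking that the unit and counit natural isomorphisms witnessing the equivalence, which consist of morphisms of $\G$ and $\HG$, automatically stay within components, since a morphism in a groupoid always connects objects lying in the same component. Once this functoriality on $\pi_0$ is in place, the rest of the argument is bookkeeping with Theorem~\ref{t:classify}.
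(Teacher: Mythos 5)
Your proposal is correct and follows essentially the same route as the paper: the paper's proof handles the ``if'' direction of (i) exactly as you do, by assembling the family of group isomorphisms into an isomorphism $\amalg_{e}\G_e \cong \amalg_{f}\HG_f$ and sandwiching it between the two equivalences supplied by Theorem~\ref{t:classify}. The only difference is that the paper leaves the ``only if'' direction and part (ii) implicit, whereas you spell them out (correctly) via the induced bijection on connected components and the fact that a fully faithful functor between one-object groupoids is a group isomorphism.
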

\begin{proof}
Let $\varphi:= (\varphi_e\colon \G_e \longrightarrow \HG_{\alpha(e)})_{e\in \pi_0(\G)}$ be a family of group isomorphisms. Then $\varphi$ induces an isomorphism $\amalg_{e\in\pi_0(\G)} \G_e \longrightarrow \amalg_{e\in\pi_0(\HG)} \HG_e$ and hence
\[
 \G \simeq \amalg_{e\in\pi_0(\G)} \G_e \cong \amalg_{e\in\pi_0(\HG)} \HG_e \simeq \HG. \qedhere
\]
\end{proof}

\begin{def.}
 A \emph{pair of groupoids} is an injective groupoid map $i\colon \A\longrightarrow \G$. A map between pairs of groupoids $i\colon \A \longrightarrow \G$ and $j\colon \B\longrightarrow\HG$ is a pair of groupoid maps $(j,j')\colon (\G,\A)\longrightarrow (\HG,\B)$ commuting with $i$ and $j$.  
\end{def.}
The next example shows that for any set of vertices, we can ``blow up'' any group to get a groupoid homotopy equivalent to the group having the given vertex set. This will be useful later when we want to consider a group~$G$ together with a family of subgroups $(A_i)_{i\in I}$ as a pair of groupoids by considering $(G_I,\amalg_{i\in I} A_i)$.
\begin{def.}\label{d:blowup}
 Let $G$ be a group and $C$ a set. We define a groupoid $G_{C}$ by setting 
\begin{itemize}
 \item Objects: $\ob G_C:= C$.
 \item Morphisms: $\fa{e,f\in C} \Mor_{G_C}(e,f) := G$.
\end{itemize}
We then define composition as the multiplication of elements in $G$.
\end{def.}

\begin{def.}
 Let $f,g\colon (\G,\A)\longrightarrow (\HG,\B)$ be maps of pairs of groupoids.  We call a homotopy $h$ between $f$ and $g$ a \emph{homotopy relative \A} if for all $a\in \A$ the morphism $H_a\colon f(a)\longrightarrow g(a)$ is contained in $\B$. 

In this situation, we also write $f\simeq_{\A,\B} g$. The restriction of $h$ induces then a homotopy $h|_{\A}$ between $f|_\A, g|_\A\colon \A\longrightarrow \B$. 
\end{def.}
\subsection{The Fundamental Groupoid}

For us, the most important examples of groupoids, besides (families of) groups, will be given by fundamental groupoids of topological spaces. These are straightforward generalisations of fundamental groups:
\begin{def.}
 Let $X$ be a topological space and $I\subset X$ a subset. We define the groupoid $\pi_1(X,I)$ with object set $I$ by setting
\begin{align*}
 \fa{i,j\in I} \Mor_{\pi_1(X,I)}(i,j) = \{c\colon [0,1]\longrightarrow X\mid c \text{ a path from $i$ to $j$ in $X$}\}/ \sim,
\end{align*}
where $\sim$ denotes homotopy relative endpoints. We define composition via concatenation of paths. This is a well-defined groupoid, called \emph{the fundamental groupoid of~$X$ with respect to $I$}. We also write $\pi_1(X):= \pi_1(X,X)$.
\end{def.}
\begin{exa}
 If $X$ is a space and $x\in X$, then $\pi_1(X,\{x\})$ is just the fundamental group of $X$ with respect to the base point $x$.  
\end{exa}

\begin{def.}
 Let $(X,I)$ and $(Y,J)$ be pairs of topological spaces. A continuous map $f\colon(X,I) \longrightarrow (Y,J)$ induces in the obvious way a groupoid map $\pi_1(f) \colon \pi_1(X,I)\longrightarrow \pi_1(Y,J)$:
\begin{enumerate}
 \item On objects we define $\pi_1(f)$ via the map \begin{align*}
 f|_{I}\colon I &\longrightarrow J\\
         {i} &\longmapsto f(i).
       \end{align*}
\item On morphisms, we set for each $i,j\in I$ 
\begin{align*}
 \Mor_{\pi_1(X,I)}(i,j) &\longrightarrow \Mor_{\pi_1(Y,J)}(f(i),f(j))\\
 [\alpha] &\longmapsto [f\circ \alpha].
\end{align*}
\end{enumerate}
This defines a functor $\pi_1\colon \Top2 \longrightarrow \Grp$.
\end{def.}
\begin{prop}
 Let $(X,I)$ and $(Y,J)$ be pairs of topological spaces. Consider maps $f,g\colon (X,I)\longrightarrow (Y,J)$. If $f$ and $g$ are homotopic, so are $\pi_1(f)$ and $\pi_1(g)$.
\end{prop}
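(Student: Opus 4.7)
The plan is to build a natural transformation from $\pi_1(f)$ to $\pi_1(g)$ out of a homotopy of pairs, thereby exhibiting the desired homotopy of groupoid maps. Concretely, a homotopy of pairs is a continuous map $H\colon (X\times [0,1], I\times [0,1]) \longrightarrow (Y,J)$ with $H(\args,0)=f$ and $H(\args,1)=g$; by definition of a map of pairs, $H$ sends $I\times [0,1]$ into $J$.

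First, for each $i\in I$, I would define
\[
 \eta_i := \bigl[\, t\mapsto H(i,t)\,\bigr] \in \Mor_{\pi_1(Y,J)}(f(i), g(i)).
\]
Since $H$ is a map of pairs, the path $t\mapsto H(i,t)$ actually runs inside $J$, and its endpoints are $f(i)$ and $g(i)$ in $J$, so $\eta_i$ is a legitimate morphism in $\pi_1(Y,J)$.

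Next, I would verify that the family $(\eta_i)_{i\in I}$ is natural, i.e.\ that for every morphism $[\alpha]\in \Mor_{\pi_1(X,I)}(i,j)$ the equality
\[
 \eta_j \circ \pi_1(f)([\alpha]) = \pi_1(g)([\alpha]) \circ \eta_i
\]
holds in $\pi_1(Y,J)$. The standard device is to consider the composite map $K := H\circ (\alpha\times \id_{[0,1]})\colon [0,1]^2\longrightarrow Y$, whose four sides are precisely $f\circ\alpha$, $g\circ\alpha$, $H(i,\args)$ and $H(j,\args)$. Reparametrising the two boundary paths from $(0,0)$ to $(1,1)$ (first along the bottom-then-right, versus first along the left-then-top) yields a homotopy rel endpoints between the two composites displayed above, giving naturality.

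The only subtle point to check is that this homotopy of paths really represents an equality in $\pi_1(Y,J)$, which requires the endpoints to lie in $J$ throughout — and they do, since they are constantly $f(i)$ and $g(j)$, both in $J$. Thus $(\eta_i)_{i\in I}$ is a natural transformation $\pi_1(f)\Rightarrow \pi_1(g)$, hence a homotopy of groupoid maps in the sense of Definition~\ref{d:groupoids}, completing the proof. The main (very mild) obstacle is purely bookkeeping: writing down the reparametrisation of $K$ that witnesses the homotopy rel endpoints between the two concatenated boundary paths; no deeper input is needed.
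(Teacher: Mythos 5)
Your proof is correct and takes essentially the same approach as the paper, which simply observes that the family $([H(i,\cdot)])_{i\in I}$ is a natural equivalence between $\pi_1(f)$ and $\pi_1(g)$; you merely spell out the standard naturality-square verification. One small remark: the requirement that the path $t\mapsto H(i,t)$ run inside $J$ is superfluous, since morphisms in $\pi_1(Y,J)$ are homotopy classes of paths in $Y$ whose \emph{endpoints} lie in $J$, so a plain homotopy $H\colon X\times[0,1]\longrightarrow Y$ between $f$ and $g$ (as the paper uses) already suffices.
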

\begin{proof}
If $H\colon X\times [0,1]\longrightarrow Y$ is a homotopy between $f$ and $g$, then the family of homotopy classes $([H(i,\cdot)])_{i\in I}$ defines a natural equivalence between $\pi_1(f)$ and $\pi_1(g)$. 
\end{proof}

\section{Bounded Cohomology for Groupoids}

\subsection{Banach Modules over Groupoids} We now extend the notion of a Banach module over a group and several related algebraic concepts to the groupoid case.
\begin{def.}[Normed groupoid modules]Let $\G$ be a groupoid. 
\begin{enumerate}
 \item A \emph{normed (left) $\G$-module} $V=(V_e)_{e\in\ob\G}$ consists of:
\begin{enumerate}
 \item A family of normed real vector spaces $(V_e)_{e\in\ob\G}$.
\item An isometric action of $\G$ on $V$, i.e. for each $g\in\G$ an isometry
\begin{align*}
\rho_g\colon V_{s(g)}&\longrightarrow V_{t(g)}\\
 v&\longmapsto  \rho_g(v)=: gv,
\end{align*}
such that:
\begin{enumerate}
 \item For all $g,h\in \G$ with $s(g)=t(h)$ we have $\rho_g\circ \rho_h$ = $\rho_{gh}$. 
\item For all $i\in \ob \G$ we have $\rho_{\id_i} = \id_{V_i}$. 
\end{enumerate}
\end{enumerate}
\item A normed $\G$-module  $V$ is called a \emph{Banach $\G$-module} if in addition for each~$e\in\ob \G$ the normed $\R$-module $(V_e,\|\cdot\|)$ is a Banach space. 
\end{enumerate}
\end{def.}
\begin{def.}[\G-maps]Let $\G$ be a groupoid. 
\begin{enumerate}
 \item Let $V$ and $W$ be normed $\G$-modules. A bounded \emph{$\R$-morphism between $V$ and $W$} is a family~$(f_e\colon V_e\longrightarrow W_e)_{e\in\ob\G}$  of bounded $\R$-linear maps, such that the supremum $\|f\|_{\infty} := \sup_{e\in\ob\G} \|f_e\|_{\infty}$ exists.

\item Let $f\colon V\longrightarrow W$ be a bounded $\R$-morphism between $\G$-modules. We call $f$ \text{\emph{a~$\G$-map}} if for all $g\in\G$ we have $\rho_g^W\circ f_{s(g)} = f_{t(g)}\circ \rho^V_g$.
\end{enumerate}
\end{def.}
\begin{rem}\label{r:categorical}
 By definition, a normed left $\G$-module is nothing else than a covariant functor $\G\longrightarrow \normedmod$ that factors through $\normedmodone$, the category of normed $\R$-modules with norm non-increasing maps. 

A bounded $\G$-map  is then a uniformly bounded natural transformation between such functors. We will sometimes use this functorial description to define some concepts more concisely. 

Similarly, we also define \emph{normed $\G$-(co)chain complexes}.
\end{rem}
\begin{def.}
 Let $\G$ be a groupoid. We write $\normedgmod$ for the category of normed $\G$-modules together with bounded $\G$-maps. 
\end{def.}
\begin{def.}[The trivial $\G$-module]
 Let $\G$ be a groupoid. We endow the family  $(\R_e)_{e\in\ob\G} =(\R)_{e\in\ob\G}$ with the  trivial $\G$-action given by
\begin{align*}
 \rho_g =\id_\R \colon \R_{s(g)} &\longrightarrow \R_{t(g)}
\end{align*}
for all $g\in\G$. This is a Banach $\G$-module, which we denote by $\R_{\G}$.
\end{def.}

\begin{def.} Let $\A$ and $\G$ be groupoids and let  $f\colon \A\longrightarrow\G$ be a groupoid map.
\begin{enumerate}
 \item   Let $U\colon \G\longrightarrow \normedmod$ be a normed $\G$-module. We call the normed $\A$-module $f^\ast U:= U\circ f$ \emph{the induced $\A$-module structure on $U$}.
\item Let $\varphi \colon U\longrightarrow V$ be a $\G$-map. We define an $\A$-map 
\[ f^{\ast} \varphi \colon f^{\ast} U\longrightarrow f^{\ast} V,\]
by setting $((f^{\ast}\varphi)_e)_{e\in \ob \A} = (\varphi_{f(e)})_{e\in\ob\A}$. This defines a functor \[f^{\ast}\colon \normedgmod\longrightarrow \normedamod.\] 
\end{enumerate}
\end{def.}
\begin{lemma}\label{l:hinduced}
 Let $\G$ and $\HG$ be groupoids and let $V$ be an $\HG$-module. Let $f_0,f_1\colon \G\longrightarrow \HG$ be groupoid maps and $h$ a homotopy between $f_0$ and $f_1$. Then 
\begin{align*}
V\circ h\colon f^{\ast}_0 V&\longrightarrow f^{\ast}_1 V\\
V_{f_0(e)}\ni v &\longmapsto h_{e}\cdot v                                                                                                                                                                                                                                                                                                                                        \end{align*}
is a $\G$-isometry.  
\end{lemma}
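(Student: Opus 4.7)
The plan is to unpack the definition of a homotopy between groupoid maps and then verify the two pieces that the statement actually demands: that each component of $V\circ h$ is an isometry, and that the family $(V\circ h)_{e\in\ob\G}$ is a $\G$-map between $f_0^\ast V$ and $f_1^\ast V$. Since everything is componentwise, I expect this to be a direct bookkeeping argument whose only real input is naturality of $h$.

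First I would recall that, by Definition~\ref{d:groupoids}, a homotopy $h$ between $f_0$ and $f_1$ is a natural equivalence, i.e.\ a family $(h_e\colon f_0(e)\longrightarrow f_1(e))_{e\in\ob\G}$ of morphisms in $\HG$ such that for every $g\in\G$ one has $h_{t(g)}\circ f_0(g)=f_1(g)\circ h_{s(g)}$ in $\HG$. Note in particular that each $h_e$ is invertible in $\HG$.

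Next I would check the isometry claim componentwise: for $e\in\ob\G$, the map $(V\circ h)_e=\rho^V_{h_e}\colon V_{f_0(e)}\longrightarrow V_{f_1(e)}$ is an isometry because the $\HG$-action on $V$ is by isometries, and it has inverse $\rho^V_{h_e^{-1}}$. In particular, $\|V\circ h\|_\infty=1$, so $V\circ h$ is a bounded $\R$-morphism of normed $\G$-modules.

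Finally I would verify $\G$-equivariance. For $g\in\G$, using $\rho^{f_i^\ast V}_g=\rho^V_{f_i(g)}$ and the fact that $\rho^V$ is a functor, one computes
\[
(V\circ h)_{t(g)}\circ \rho^{f_0^\ast V}_g
=\rho^V_{h_{t(g)}\circ f_0(g)}
=\rho^V_{f_1(g)\circ h_{s(g)}}
=\rho^{f_1^\ast V}_g\circ (V\circ h)_{s(g)},
\]
where the middle equality is precisely the naturality of $h$. Thus $V\circ h$ is a $\G$-map, and combined with the previous step it is a $\G$-isometry. The only non-automatic step is this naturality identity, but it is built into the very definition of a homotopy, so I do not expect any real obstacle.
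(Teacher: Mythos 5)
Your proof is correct and is essentially the paper's argument made explicit: the paper deduces $\G$-equivariance by citing the functorial description of $\G$-modules (so $V\circ h$ is the whiskering of the natural transformation $h$ with $V$), which is exactly the componentwise naturality computation you wrote out, and it gets invertibility from the inverse homotopy $V\circ\overline h$, matching your componentwise inverses $\rho^V_{h_e^{-1}}$, with isometry coming in both cases from the $\HG$-action being isometric.
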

\begin{proof}
 By Remark~\ref{r:categorical}, $V\circ h$ is a $\G$-map. Its inverse is given by $V\circ \overline h$, where~$\overline h$ denotes the inverse homotopy to $h$.
\end{proof}
\noindent We will also use a multiplicative notation and write $h\cdot v$ to denote $(V\circ h)(v)$. 
\begin{def.}
 Let $\G$ be a groupoid and and let $(V,\|\cdot\|_V)$ and $(W,\|\cdot\|_W)$ be normed \G-modules. Consider the family 
\begin{align*}
B(V,W) := (B(V_e,W_e))_{e\in\ob\G}
\end{align*}
 Here, $B(V_e,W_e)$ denotes the space of bounded linear functions from $V_e$ to~$W_e$, endowed with the family of operator norms $\|\cdot\|_{\infty}$ induced by the families of norms on $V$ and $W$. We define an isometric \G-action on $B(V,W)$ by setting 
for all $g\in \G$ and all $f\in \Hom(V_{s(g)},W_{s(g)})$ 
\begin{align*}
\fa{v\in V_{t(g)}} (g\cdot f)(v) =  g\cdot f(g^{-1}\cdot v).
\end{align*}
 If $W$ is a Banach $\G$-module, so is $B(V,W)$. This is functorial with respect to bounded $\G$-maps. 
\end{def.}
\newpage
\begin{rem}
Alternatively, we can view $B(V,W)$ as the composition 
\begin{center}
\begin{tikzpicture}
\matrix (m) [scale = 0.5,matrix of math nodes, row sep=2.5em,
column sep=4em, text height=2.5ex, text depth=0.25ex]
{\G&(\normedmod)^2&\normedmod,
\\ };
\path[->]
(m-1-1) edge node[auto] {$(V,\overline{W})$}(m-1-2)
(m-1-2) edge node[auto] {$B$}  (m-1-3)
;
\end{tikzpicture}
\end{center}
where $\overline{W}$ is the contravariant functor given by inverting morphisms in $\G$ and then applying $W$.
\end{rem}

\begin{def.}
Let $V=(V_e,\|\cdot\|_e)_{e\in\ob \G}$ be a normed $\G$-module.  We call the $\R$-submodule 
\begin{align*}
 V^{\G}:= \Bigl\{v\in \prod_{e\in\ob\G} V_e\;\Bigm |\; \fa{g\in\G} g\cdot v_{s(g)} = v_{t(g)},\ \sup_{e\in\ob\G} \|v_e\|_e <\infty\Bigr\},
\end{align*}
of $\prod_{e\in\ob\G}V_e$ endowed with the norm given by setting for each $v\in V^\G$ 
\begin{align*}
 \|v\|:=\sup_{e\in\ob\G} \|v_e\|_e
\end{align*}
\emph{the invariants of $V$}.
Alternatively, this can also be viewed as the canonical model for the limit of $V\colon \G\longrightarrow \normedmodone.$
\end{def.}
This is functorial with respect to bounded $\G$-maps:
\begin{prop}
 The invariants define a functor 
\[
 (\;\cdot\;)^\G \colon \normedgmod \longrightarrow \normedmod. 
\]
\end{prop}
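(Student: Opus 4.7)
The plan is to construct the functor explicitly on morphisms and then verify the two functoriality axioms, noting that the object part has already been specified.

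First, given a bounded $\G$-map $\varphi \colon V \longrightarrow W$ between normed $\G$-modules, I would define $\varphi^\G \colon V^\G \longrightarrow W^\G$ componentwise by
\[
\varphi^\G(v) := (\varphi_e(v_e))_{e \in \ob \G}
\]
for $v = (v_e)_{e \in \ob \G} \in V^\G$. The main verification is that $\varphi^\G(v)$ actually lies in $W^\G$. Boundedness of the family is immediate from $\sup_e \|\varphi_e(v_e)\|_{W_e} \leq \|\varphi\|_\infty \cdot \|v\|$. For $\G$-invariance, let $g \in \G$; using that $\varphi$ is a $\G$-map and that $v$ itself is invariant, we compute
\[
\rho_g^W\bigl(\varphi_{s(g)}(v_{s(g)})\bigr) = \varphi_{t(g)}\bigl(\rho_g^V(v_{s(g)})\bigr) = \varphi_{t(g)}(v_{t(g)}),
\]
which is exactly the invariance condition for $\varphi^\G(v)$.

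Next, $\R$-linearity of $\varphi^\G$ is inherited from the componentwise $\R$-linearity of each $\varphi_e$, and the computation above simultaneously shows $\|\varphi^\G\|_\infty \leq \|\varphi\|_\infty < \infty$, so $\varphi^\G$ is a morphism in $\normedmod$.

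Finally, functoriality is a direct componentwise check: $(\id_V)^\G$ sends $(v_e)_e$ to $(\id_{V_e}(v_e))_e = (v_e)_e$, hence equals $\id_{V^\G}$; and for composable $\G$-maps $\varphi \colon V \longrightarrow W$, $\psi \colon W \longrightarrow U$, both $(\psi \circ \varphi)^\G$ and $\psi^\G \circ \varphi^\G$ send $(v_e)_e$ to $(\psi_e(\varphi_e(v_e)))_e$. I don't expect any real obstacle here; everything reduces to componentwise verifications, with the one genuinely substantive point being the invariance check above, which is exactly the reason the definition of a $\G$-map is set up the way it is. One could alternatively phrase the whole argument categorically via Remark~\ref{r:categorical}: $(\cdot)^\G$ is the limit functor over $\G$ restricted to $\normedmodone$-valued functors and bounded natural transformations, and functoriality of $\lim$ gives the statement directly.
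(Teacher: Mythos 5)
Your proof is correct: the componentwise definition of $\varphi^\G$, the invariance check using the $\G$-map condition, the norm bound $\|\varphi^\G\|_\infty \leq \|\varphi\|_\infty$, and the two functoriality axioms are exactly the routine verifications needed. The paper states this proposition without proof, treating it as immediate (the alternative categorical phrasing via the limit description you mention is precisely the viewpoint of Remark~\ref{r:categorical}), so your argument simply supplies the standard details the paper omits.
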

One important example of invariants will be $B_{\G}(V,W):= B(V,W)^{\G}$. 
\subsection{Definition of Bounded Cohomology} We now give our definition of bounded cohomology for groupoids via Bar resolutions. 
\begin{def.}[The Bar resolution for groupoids]\label{d:barres} Let $\G$ be a groupoid.
\begin{enumerate}
 \item For each $n\in\N$ and each $e\in\ob\G$, we set
\[
 P_n(\G)_e  = \{(g_0,\dots,g_n)\in\G^{n+1}\mid \fa{i\in \{0,\dots,n-1\}} s(g_i) = t(g_{i+1}), t(g_0)=e\}.
\]
Equivalently, $P_n(\G)_e$ is the set of all ($n+1$)-paths in $\G$ ending in $e$.
\item For each $n\in\N$, define a normed $\G$-module $C_n(G)=(C_n(G)_e)_{e\in\ob\G}$ as follows. For all $e \in \ob \G$, we set
\[
 C_n(G)_e := \R\langle P_n(\G)_e\rangle,
\]
endowed with the $\ell^1$-norm with respect to the basis $P_n(G)_e$. We then define an isometric $\G$-action on $C_n(\G)$ by setting for all $g\in\G$
\begin{align*}
 \rho_g\colon C_n(\G)_{s(g)} &\longrightarrow C_n(\G)_{t(g)}\\
  (g_0,\dots,g_n) &\longmapsto (g\cdot g_0, \dots,g_n).
\end{align*}
\item 
For each $n\in \N$, we define  boundary maps
\begin{align*}
 \partial_n\colon C_n(\G) &\longrightarrow C_{n-1}(\G)\\
(g_0,\dots,g_n) &\longmapsto \sum_{i=0}^{n-1} (-1)^i (g_0,\dots,g_i\cdot g_{i+1},\dots, g_n)\\
&\phantom{\longmapsto} + (-1)^n\cdot (g_0,\dots, g_{n-1}).
\end{align*}
These maps are clearly $\G$-equivariant and the usual calculation shows that this does indeed define a $\G$-chain complex. By definition, these maps are bounded $\G$-maps and $\|\partial_n\|_{\infty} \leq n+1$ for all $n\in\N$, so~$(C_\ast(\G),\partial_\ast)$ is a normed $\G$-chain complex. 
\end{enumerate}
\end{def.}
This is a resolution in the following sense:
\begin{rem}\label{r:chaincontraction}
Let $\G$ be a groupoid. Consider the canonical augmentation
\begin{align*}
 \varepsilon \colon C_0(\G) &\longrightarrow \R_\G\\
g&\longmapsto t(g)\cdot 1.
\intertext{Then $(C_n(\G),\partial_n)_{n\in\N}$ together with $\varepsilon$ is a $\G$-resolution of $\R_\G$. An $\R$-chain contraction $s_\ast$ is given by the $\R$-morphisms
}
 s_{-1}\colon \R_\G&\longrightarrow C_0(\G)\\
e&\longmapsto \id_e
\intertext{and for all $n\in\N$}
s_n\colon C_n(\G) &\longrightarrow C_{n+1}(\G)\\
(g_0,\dots,g_n) &\longmapsto (\id_{t(g_0)},g_0,\dots,g_n).
\end{align*}
\end{rem}
To study amenable groupoids, it will be useful to have also a homogeneous resolution chain isometric to the inhomogeneous Bar resolution:

\begin{def.}[The homogeneous Bar resolution]\label{d:hombar} Let $\G$ be a groupoid.

\begin{enumerate}
 \item For each $n\in\N$ and each $e\in\ob\G$, we set
\[
 S_n(\G)_e  = \{(g_0,\dots,g_n)\in\G^{n+1}  \mid \fa{i\in\{0,\dots,n\}} t(g_i) = e\}.
\]
 \item For each $n\in\N$, we define a normed $\G$-module $L_n(\G)$ by setting for each $e\in \ob G$
\begin{align*}
 L_n(\G)_e = \R\langle S_n(\G)_e\rangle,
\end{align*}
endowed with the $\ell^1$-norm with respect to the basis $S_n(\G)_e$ and defining a $\G$-action by setting for each $g\in\G$
\begin{align*}
 \rho_g\colon L_n(\G)_{s(g)} &\longrightarrow L_n(\G)_{t(g)}\\
  (g_0,\dots,g_n) &\longmapsto (g\cdot g_0, \dots,g\cdot g_n).
\end{align*}
\item 
For each $n\in \N$, we define  boundary maps
\begin{align*}
 \partial_n\colon L_n(\G) &\longrightarrow L_{n-1}(\G)\\
(g_0,\dots,g_n) &\longmapsto \sum_{i=0}^{n} (-1)^i (g_0,\dots,\hat{g_i},\dots, g_n).
\end{align*}
Again, these maps are clearly $\G$-equivariant and the usual calculation shows that this does indeed define a $\G$-chain complex. By definition, these maps are bounded $\G$-maps and $\|\partial_n\|_{\infty} \leq n+1$ for all $n\in\N$, so~$(C_\ast(\G),\partial_\ast)$ is a normed $\G$-chain complex. 
\end{enumerate}
\end{def.}
\begin{rem}
 If $\G$ is a group, then $C_{\ast}(\G)$, respectively $L_{\ast}(\G)$, coincide with the usual definition of the (inhomogeneous, respectively homogeneous) real Bar resolution of the group $\G$.
\end{rem}
\begin{prop}\label{p:homogeneous}
 The maps 
\begin{align*}
C_n(\G)&\longrightarrow L_n(\G)\\
(g_0,\dots,g_n) &\longmapsto (g_0,g_0\cdot g_1,\dots, g_0\cdot\cdots g_n)
\intertext{and}
L_n(\G)&\longrightarrow C_n(\G)\\
(g_0,\dots,g_n) &\longmapsto (g_0,g_0^{-1}\cdot g_1,\dots, g_{n-1}^{-1}\cdot g_n)
\end{align*}
are mutually inverse $\G$-chain isometries. 
\end{prop}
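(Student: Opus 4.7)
My plan is to break the statement into four routine verifications, treating only the chain map property as a computation worth spelling out, and letting the remaining claims follow from the fact that both maps send basis elements to basis elements bijectively.

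First I would check that the two maps are well-defined, i.e.\ that they really land in $L_n(\G)_e$ and $C_n(\G)_e$ respectively. For $\alpha\colon C_n(\G)\to L_n(\G)$, given $(g_0,\dots,g_n)\in P_n(\G)_e$ with $t(g_0)=e$ and $s(g_i)=t(g_{i+1})$, each partial product $g_0\cdots g_k$ is defined and has target $e$, so the tuple of partial products lies in $S_n(\G)_e$. For $\beta\colon L_n(\G)\to C_n(\G)$, given $(g_0,\dots,g_n)\in S_n(\G)_e$ with $t(g_i)=e$ for all $i$, one has $s(g_{i-1}^{-1})=t(g_{i-1})=e=t(g_i)$, so $g_{i-1}^{-1}\cdot g_i$ is defined; moreover $t(g_{i-1}^{-1}\cdot g_i)=s(g_{i-1})=s(g_{i-1}^{-1}\cdot g_i\text{ at position }i)$ lines up with the source of the preceding entry, so the image is a composable $(n+1)$-tuple ending in $e$.

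Next I would observe that $\alpha$ and $\beta$ are mutually inverse on basis elements by direct computation: applying $\beta\circ\alpha$ to $(g_0,\dots,g_n)\in P_n(\G)_e$ gives back $(g_0,g_0^{-1}(g_0g_1),(g_0g_1)^{-1}(g_0g_1g_2),\dots)=(g_0,g_1,\dots,g_n)$, and the telescoping in $\alpha\circ\beta$ yields $g_0\cdot(g_0^{-1}g_1)\cdots(g_{k-1}^{-1}g_k)=g_k$, so we recover the original tuple. Since both maps thus induce bijections $P_n(\G)_e\leftrightarrow S_n(\G)_e$, their $\R$-linear extensions are $\ell^1$-isometries. $\G$-equivariance is immediate: the action on $C_n(\G)$ multiplies $g_0$ on the left by $g$, which under $\alpha$ multiplies every partial product by $g$ on the left; this is exactly the $\G$-action on $L_n(\G)$, and similarly for $\beta$.

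The only step with real content is checking that $\alpha$ is a chain map; then $\beta=\alpha^{-1}$ commutes with $\partial$ automatically. Writing $h_k:=g_0\cdots g_k$, the inhomogeneous boundary term $(g_0,\dots,g_ig_{i+1},\dots,g_n)$ for $0\le i\le n-1$ is sent by $\alpha$ to $(h_0,\dots,h_{i-1},h_{i+1},\dots,h_n)$, since all partial products past position $i$ are unchanged while the entry at position $i$ becomes $g_0\cdots g_{i-1}\cdot g_ig_{i+1}=h_{i+1}$; and the final term $(-1)^n(g_0,\dots,g_{n-1})$ is sent to $(-1)^n(h_0,\dots,h_{n-1})$, i.e.\ the omission of $h_n$. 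Summing, $\alpha(\partial(g_0,\dots,g_n))=\sum_{i=0}^n(-1)^i(h_0,\dots,\widehat{h_i},\dots,h_n)=\partial(\alpha(g_0,\dots,g_n))$. This bookkeeping is the only non-trivial part; everything else is formal. The main obstacle, if any, is just keeping the indexing straight when verifying that the omitted-entry pattern of the inhomogeneous differential matches, via $\alpha$, the omitted-entry pattern of the homogeneous differential.
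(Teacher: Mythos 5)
Your proposal is correct and follows essentially the same route as the paper, which simply notes that the maps are obviously mutually inverse $\G$-maps, are chain maps by the same calculation as in the group case, and are isometric because they send simplices to simplices; you merely write out the telescoping and boundary bookkeeping that the paper delegates to the classical reference. The explicit verification of the chain-map identity and of well-definedness via the source/target conditions is accurate, so nothing is missing.
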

\begin{proof}
 The maps are obviously mutually inverse $\G$-maps in each degree. They are chain maps by the same calculation as in the group case~\cite[Section VI.13]{HS97} and norm non-increasing (and hence isometric) because they map simplices to simplices. 
\end{proof}

\begin{def.}[Functoriality]
 Let $f\colon \A\longrightarrow\G$ be a groupoid map. Then \emph{the induced map}
\begin{align*}
\begin{pmatrix}
  \begin{aligned}
   C_n(f)\colon C_n(\A)&\longrightarrow f^{\ast}C_n(\G)\\
 (a_0,\dots,a_n)&\longmapsto (f(a_0),\dots,f(a_n))
  \end{aligned}
\end{pmatrix}_{n\in\N}
\end{align*}
is an $\A$-chain map with respect to the induced $\A$-structure on $C_\ast(\G)$. The map $C_n(f)$ is bounded for each $n\in\N$ and satisfies $\|C_n(f)\|_{\infty}\leq 1,$ for~$C_n(f)$ maps simplices to simplices. 
\end{def.}

In order to study bounded cohomology with coefficients, it will be useful to define a domain category that encompasses both groupoids and coefficient modules. We follow Kenneth Brown~\cite[Section III.8]{Br82} here:

\begin{def.}[Domain Categories for Bounded Cohomology]
We define a category $\GrpBanc$ by setting:
\begin{enumerate}
\item Objects in $\GrpBanc$ are pairs $(\G,V)$, where $\G$ is a groupoid and $V$ is a Banach $\G$-module.
\item A morphism $(\G,V)\longrightarrow (\HG,W)$ in $\GrpBanc$ is a pair $(f,\varphi)$, where $f\colon \G\longrightarrow \HG$ is a groupoid map and $\varphi \colon f^{\ast}W\longrightarrow V$ is a bounded $\G$-map. 
\item Composition is defined by setting for all composable pairs of morphisms $(f,\varphi)\colon (\G,V)\longrightarrow (\HG,W)$ and $(g,\psi)\colon (\HG,W)\longrightarrow (\A,U)$:
\[
 (g,\psi)\circ(f,\varphi) := (g\circ f, \varphi\circ (f^\ast \psi) ).
\]
\end{enumerate}
\end{def.}
\begin{def.}[The Banach Bar Complex with Coefficients]
 Let $\G$ be a groupoid and $V$ a Banach $\G$-module. 
\begin{enumerate}
\item We write 
\[
 C^{\ast}_b(\G;V):= B_{\G}(C_{\ast}(\G),V).
\]
Together with $\|\cdot\|_{\infty}$, this is a normed $\R$-cochain complex. 
\item If $(f,\varphi)\colon (\G,V)\longrightarrow (\HG,W)$ is a morphism in $\GrpBanc$, we write $ C^{\ast}_b(f;\varphi)$ for the map
\begin{align*}
 C^{\ast}_b(\HG;W)&\longrightarrow C^{\ast}_b(\G;V)\\
(w_e)_{f\in\ob\HG}&\longmapsto (\varphi \circ w_{f(e)}\circ C_{\ast}(f))_{e\in\ob \G}.
\end{align*}

This defines a contravariant functor $C_b^\ast \colon \GrpBanc \longrightarrow \RCh^{\|\cdot\|}$.
\end{enumerate}
\end{def.}
\begin{def.}
 Let $\G$ be a groupoid, $V$ a Banach $\G$-module.  We call the cohomology 
\[
 H^{\ast}_{b}(\G;V):= H^{\ast}(B_{\G}(C_{\ast}(\G),V)),
\]
together with the induced semi-norm on $H^{\ast}_{b}(\G;V)$, the \emph{bounded cohomology of $\G$ with coefficients in $V$}. 

This defines a contravariant functor $H^\ast_b\colon\GrpBanc \longrightarrow \Modgrn$.
\end{def.}

\begin{rem}\label{r:boundedell1conincide}
 As before, if $\G$ is a group, our definition of bounded cohomology coincides with the usual one.
\end{rem}
\subsection{Elementary Properties}
In this part, we show that bounded cohomology is an additive homotopy invariant for groupoids and see that it can be calculated by the  bounded cohomology of vertex groups.
\begin{prop}\label{p:bgrphomotopy}
 Let $\G$ and $\HG$ be groupoids. Let $f_0,f_1\colon \G\longrightarrow \HG$ be groupoid maps and let $h$ be a homotopy from $f_1$ to $f_0$. 
\begin{enumerate}
 \item For each $n\in\N$ and each $i\in\{0,\dots,n\}$ define a bounded $\G$-map
\begin{align*}
 s_n^i \colon C_n(\G) &\longrightarrow f_0^{\ast} C_{n+1}(\HG)\\
 (g_0,\dots,g_n) &\longmapsto (f_0(g_0),\dots,f_0(g_{i}),h_{s(g_{i})},f_1(g_{i+1}),\dots, f_1(g_{n})).
\end{align*}
Define for each $n\in\N$ a $\G$-map 
\begin{align*}
 s^h_n:= \sum_{i=0}^n (-1)^i s_n^i.
\end{align*}
Then $\partial_{n+1}\circ s^h_n + s^h_{n-1} \circ\partial_n = C_{n}(f_0)- h\cdot C_{n}(f_1)$ for each $n\in\N$, i.e.,~$s^h_{\ast}$ is a bounded $\G$-chain homotopy between $C_{\ast}(f_0)$ and $h\cdot C_{\ast}(f_1)$. 
\item Let $V$ be a Banach $\G$-module. The family
\begin{align*}
 s^\ast_{V,h} := (B_{\G}(s_{\ast}, \id_{f^{\ast}_0 V}))\circ j\colon C^\ast_b(\HG,V) &\longrightarrow C^{\ast-1}_b(\G;f_0^{\ast}V)\\
 (w_e)_{e\in\ob \HG} &\longmapsto (w_{f_0(e)}\circ s_{\ast,e})_{e\in\ob \G}. 
\end{align*}
is an $\R$-cochain homotopy between  $C_b^\ast(f_0;V)$ and~$C_b^{\ast}(f_1;V\circ h)$.
\item In particular,
\[
 H^\ast_b (f_0;V) = H^{\ast}_b(\id_{\G}; V\circ \overline{h})\circ H^{\ast}_b(f_1;V);
\]
and the map $H^{\ast}_b(\id_{\G}; V\circ \overline{h})$ is an isometric isomorphism. 
\end{enumerate}
\end{prop}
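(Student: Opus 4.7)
The plan for part~(i) is to first verify that each $s_n^i$ is a well-defined bounded $\G$-map from $C_n(\G)$ to $f_0^{\ast}C_{n+1}(\HG)$. Since $h$ is a homotopy from $f_1$ to $f_0$, the component $h_e$ is a morphism $f_1(e)\to f_0(e)$, and the inserted element $h_{s(g_i)}$ therefore has target $f_0(s(g_i))=s(f_0(g_i))$ and source $f_1(s(g_i))=f_1(t(g_{i+1}))=t(f_1(g_{i+1}))$; hence $s_n^i(g_0,\ldots,g_n)$ is a composable path in $\HG$ with target $f_0(e)$. Equivariance is automatic because the $\G$-action on $C_n(\G)$ changes only the first entry, to which $s_n^i$ applies $f_0$, matching the pulled-back $\G$-action on $f_0^{\ast}C_{n+1}(\HG)$. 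Boundedness is immediate: $s_n^i$ sends basis paths to basis paths, so $\|s_n^i\|_\infty\leq 1$ and $\|s^h_n\|_\infty\leq n+1$.

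For the chain-homotopy identity, I will compute $\partial_{n+1}\circ s^h_n + s^h_{n-1}\circ\partial_n$ directly, in the spirit of the classical Eilenberg--Zilber prism. Each boundary summand $\partial^k$ of $s_n^i$ falls into four cases according to the relation between $k$ and $i$: merges within the $f_0$-block ($k<i$) pair, after re-indexing $j=i-1$, with merges coming from $s^j_{n-1}$ in $s^h_{n-1}\circ\partial_n$ and cancel in sign; merges within the $f_1$-block ($k>i+1$) pair similarly with $s^i_{n-1}$-contributions and cancel; the trailing "drop last" terms for $i<n$ cancel against the trailing part of $s^h_{n-1}\circ\partial_n$. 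Finally, the two seam merges at $k=i$ (producing $f_0(g_i)\cdot h_{s(g_i)}$) and $k=i+1$ (producing $h_{s(g_i)}\cdot f_1(g_{i+1})$) pair \emph{across} successive values of $i$ and cancel via the naturality relation $f_0(g)\cdot h_a=h_b\cdot f_1(g)$ for any $g\colon a\to b$ in $\G$. The only surviving contributions come from the extremes: the seam at $i=0$, after applying naturality to $g_0$, evaluates to $h_{t(g_0)}\cdot(f_1(g_0),\ldots,f_1(g_n))=h\cdot C_n(f_1)(c)$, while the trailing drop at $i=n$ yields $C_n(f_0)(c)$. Collecting signs gives the stated identity.

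For part~(ii), I dualise. Precomposition of an $\HG$-invariant cochain with the object map $e\mapsto f_0(e)$ gives the map $j$ landing in $B_\G(f_0^{\ast}C_\ast(\HG),f_0^{\ast}V)$, and postcomposing with $s^h_{\ast-1}$ produces $s^\ast_{V,h}$, whose norm is bounded by $\|s^h_{\ast-1}\|$. Evaluating $\delta\circ s^\ast_{V,h}+s^\ast_{V,h}\circ\delta$ on $w\in C^n_b(\HG;V)$ and $c\in C_n(\G)_e$ reduces, by linearity and part~(i), to $w_{f_0(e)}\bigl(C_n(f_0)(c)-h_e\cdot C_n(f_1)(c)\bigr)$. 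The first summand is exactly $C^n_b(f_0;V)(w)_e(c)$. For the second, the $\HG$-equivariance of $w$ yields $w_{f_0(e)}(h_e\cdot v)=h_e\cdot w_{f_1(e)}(v)=(V\circ h)_e\bigl(w_{f_1(e)}(v)\bigr)$ for $v\in C_n(\HG)_{f_1(e)}$, and with $v=C_n(f_1)(c)$ this coincides with $C^n_b(f_1;V\circ h)(w)_e(c)$ by the definition of $C^\ast_b$ on morphisms of $\GrpBanc$.

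Part~(iii) then follows by passing to cohomology, which upgrades the cochain homotopy of~(ii) to the equality $H^\ast_b(f_0;V)=H^\ast_b(f_1;V\circ h)$. By contravariant functoriality of $C^\ast_b$ on $\GrpBanc$ applied to the factorisation $(f_1,V\circ h)=(f_1,\id_{f_1^{\ast}V})\circ(\id_\G,V\circ\overline h)$, with $\overline h$ the inverse homotopy oriented so that $V\circ\overline h$ is the module map in the correct direction, this rewrites as $H^\ast_b(f_0;V)=H^\ast_b(\id_\G;V\circ\overline h)\circ H^\ast_b(f_1;V)$. Lemma~\ref{l:hinduced} identifies $V\circ\overline h$ as a $\G$-isometric isomorphism with inverse $V\circ h$, so $H^\ast_b(\id_\G;V\circ\overline h)$ is an isometric isomorphism. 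The main obstacle in the entire proof is the bookkeeping in the prism calculation in part~(i): identifying which seam terms pair across distinct values of $i$ and verifying that the naturality of $h$ cancels them with the right signs takes care, even though the overall strategy is the standard prism decomposition transposed to the groupoid Bar complex.
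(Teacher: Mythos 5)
Your proof is correct and takes essentially the same route as the paper, which only asserts the ``short calculation'' you actually carry out: the prism-style cancellation of non-seam terms against $s^h_{n-1}\circ\partial_n$, the telescoping of the seam terms via naturality of $h$, dualisation for part~(ii), and functoriality together with Lemma~\ref{l:hinduced} for part~(iii). One cosmetic caveat: with the paper's conventions the surviving terms you correctly identify yield $\partial_{n+1}\circ s^h_n+s^h_{n-1}\circ\partial_n = h\cdot C_n(f_1)-C_n(f_0)$, i.e.\ the displayed identity only up to a global sign (equivalently, replace $s^h_\ast$ by $-s^h_\ast$); this slip, like the $h$ versus $\overline h$ bookkeeping in (iii) that you hedge on, is already present in the paper's formulation and does not affect the chain-homotopy or isometry conclusions.
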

\begin{proof} A short calculation shows that $s^h_\ast$ is a bounded $\G$-chain homotopy.
It remains to show that $H^{\ast}_b(\id_{\G}; V\circ \overline{h})$ is isometric. But 
\begin{align*}
V\circ \overline{h}\colon f^{\ast}_0 V&\longrightarrow f^{\ast}_1 V\\
V_{f_0(e)}\ni v &\longmapsto \overline{h}_{e}\cdot v                                                                                                                                                                                                                                                                                                                                        \end{align*}
is isometric since the $\HG$-action on $V$ is isometric, hence all the induced maps are also isometric. 
\end{proof}

\begin{cor}\label{c:htpinvghn}
 Let $f\colon \G\longrightarrow \HG$ be an equivalence between groupoids and~$V$ a Banach $\HG$-module. Then the induced maps 
\[
 H^{\ast}_b(f;V)\colon H^{\ast}_b(\HG;V) \longrightarrow H^{\ast}_b(\G;f^{\ast}V)
\]
is an isometric isomorphism of semi-normed graded $\R$-modules.
\end{cor}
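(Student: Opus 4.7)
The strategy is to combine the homotopy invariance from Proposition \ref{p:bgrphomotopy} with the observation that every induced map $H^*_b(\varphi;W)$ coming from a groupoid map $\varphi$ is norm non-increasing, since its chain-level representative $C_*(\varphi)$ sends simplices to simplices.

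Since $f\colon\G\to\HG$ is an equivalence, I pick a pseudo-inverse $g\colon\HG\to\G$ together with homotopies $h$ between $g\circ f$ and $\id_\G$ and $h'$ between $f\circ g$ and $\id_\HG$. I then apply Proposition \ref{p:bgrphomotopy}(iii) twice: once with $f_0 = \id_\HG$, $f_1 = f\circ g$, homotopy $h'$, and coefficients $V$, and once with $f_0 = \id_\G$, $f_1 = g\circ f$, homotopy $h$, and coefficients $f^*V$. Using the contravariant functoriality of $H^*_b$ to factor $H^*_b(f\circ g;V) = H^*_b(g;f^*V)\circ H^*_b(f;V)$, and similarly for $g\circ f$, the two applications yield identities of the form
\[
 \id_{H^*_b(\HG;V)} = \alpha \circ H^*_b(g;f^*V)\circ H^*_b(f;V),
\]
\[
 \id_{H^*_b(\G;f^*V)} = \beta \circ H^*_b(f;g^*f^*V)\circ H^*_b(g;f^*V),
\]
where $\alpha$ and $\beta$ are the isometric isomorphisms of the $H^*_b(\id;\,V\circ\overline{h})$-type supplied by the proposition.

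From these two identities, $H^*_b(g;f^*V)$ inherits both a right inverse (from the first, using $\alpha^{-1}$) and a left inverse (from the second), hence is bijective. The first identity then solves for $H^*_b(f;V) = H^*_b(g;f^*V)^{-1}\circ\alpha^{-1}$, so $H^*_b(f;V)$ is bijective as well. For the isometry statement, I chase $x\in H^*_b(\HG;V)$ through the first identity: the two non-$\alpha$ factors are norm non-increasing while $\alpha$ is isometric, so
\[
 \|x\| \;\leq\; \bigl\|H^*_b(g;f^*V)\bigl(H^*_b(f;V)(x)\bigr)\bigr\| \;\leq\; \|H^*_b(f;V)(x)\| \;\leq\; \|x\|,
\]
and all inequalities must be equalities, forcing $\|H^*_b(f;V)(x)\| = \|x\|$.

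The only real obstacle is the notational bookkeeping: one must keep track of which groupoid each coefficient module is defined over (with pullbacks along both $f$ and $g$ and their composites) and of the contravariance of $H^*_b$. The mathematical content is essentially a standard \emph{bijectivity plus norm non-increasing on both sides forces isometry} argument grafted onto the homotopy invariance supplied by Proposition \ref{p:bgrphomotopy}.
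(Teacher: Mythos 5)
Your argument is correct and is essentially the intended derivation: the paper states Corollary~\ref{c:htpinvghn} as an immediate consequence of Proposition~\ref{p:bgrphomotopy}, and your route -- choosing a pseudo-inverse $g$, applying part (iii) to $f\circ g\simeq\id_{\HG}$ and $g\circ f\simeq\id_{\G}$, factoring via contravariant functoriality, and then forcing isometry from the fact that both induced maps are semi-norm non-increasing while the correction terms $\alpha,\beta$ are isometric -- is exactly that consequence spelled out. The bookkeeping of coefficient pullbacks and the two-sided-inverse argument are handled correctly.
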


\begin{cor}[Bounded group cohomology calculates bounded groupoid cohomology]
Let $\G$ be a connected groupoid and $V$ a Banach $\G$-module. Let $e\in\ob \G$ be a vertex and $i_e\colon \G_e\longrightarrow \G$ the inclusion of the corresponding vertex group. Then 
\[
 H^{\ast}_b(i_e,V) \colon H^{\ast}_b(\G,V) \longrightarrow H^{\ast}_b(\G_e;i_e^{\ast}V)
\]
is an isometric isomorphism of normed graded $\R$-modules. 
\end{cor}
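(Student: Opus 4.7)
The plan is to deduce this directly from the homotopy invariance established in Corollary~\ref{c:htpinvghn} by showing that the inclusion $i_e\colon \G_e\longrightarrow \G$ is an equivalence of groupoids.

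First, I would observe that since $\G$ is connected, the full subgroupoid on the single object $e$ (which is exactly the vertex group $\G_e$ viewed as a one-object groupoid) meets every connected component of $\G$. Hence Theorem~\ref{t:classify} applies to the inclusion $i_e$ and provides a groupoid map $p\colon \G\longrightarrow \G_e$ with
\[
 p\circ i_e = \id_{\G_e} \quad\text{and}\quad i_e\circ p \simeq \id_\G.
\]
In particular, $i_e$ is an equivalence of groupoids.

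Second, I would apply Corollary~\ref{c:htpinvghn} to $i_e$ and the Banach $\G$-module $V$. This immediately yields that
\[
 H^{\ast}_b(i_e;V)\colon H^{\ast}_b(\G;V)\longrightarrow H^{\ast}_b(\G_e;i_e^{\ast}V)
\]
is an isometric isomorphism of semi-normed graded $\R$-modules, which is precisely the assertion of the corollary.

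There is no real obstacle here: the result is a formal consequence of the two earlier results once one recognises that $\G_e$ is a full subgroupoid of $\G$ meeting every connected component. The only small point worth mentioning is that the identification of $H^\ast_b(\G_e;i_e^\ast V)$ with the usual bounded group cohomology of the vertex group $\G_e$ is guaranteed by Remark~\ref{r:boundedell1conincide}.
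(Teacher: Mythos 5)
Your proof is correct and follows essentially the same route as the paper: establish that the inclusion $i_e$ of the vertex group is an equivalence of groupoids and then apply the homotopy invariance of Corollary~\ref{c:htpinvghn} together with Remark~\ref{r:boundedell1conincide}. The only cosmetic difference is that you invoke Theorem~\ref{t:classify} directly (which in fact makes it explicit that $i_e$ itself is the equivalence), whereas the paper cites Corollary~\ref{c:grphequiv}, which is itself a consequence of that theorem.
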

\begin{proof}
 By Corollary~\ref{c:grphequiv}, $i_e$ is a homotopy equivalence and by Corollary~\ref{c:htpinvghn}, the induced maps are isometric isomorphisms. Thus the result follows form Remark~\ref{r:boundedell1conincide}.
\end{proof}

\begin{def.}\label{d:prodnorm}
 Let $(V_i,\|\cdot\|_i)_{i\in I}$ be a family of (semi-)normed $\R$-modules. We define \emph{the normed product} of $(V_i,\|\cdot\|_i)_{i\in I}$ as the $\R$-module
\[
\prod_{i\in I}^{\|\cdot\|} (V_i,\|\cdot\|_i) := \Bigl\{ v\in \prod_{i\in i} V_i\,\Bigm|\, \sup_{i\in I}\|v_i\|<\infty\Bigr\}
\]
endowed with the product norm 
\begin{align*}
\prod_{i\in I}^{\|\cdot\|} (V_i,\|\cdot\|_i) &\longrightarrow \R\\
  (v_i)_{i\in I} &\longmapsto \sup \{\|v_i\|_i\mid i\in I\}.
\end{align*}
If $I$ is finite, the normed product of $(V_i,\|\cdot\|_i)_{i\in I}$ coincides with $\prod_{i\in I} V_i$ endowed with the maximum norm.  
\end{def.}

\begin{prop}[Bounded groupoid cohomology and disjoint unions]\label{p:grpadditive}
 Let~$\G$ be a groupoid  and $V$ a Banach~$\G$-module. Let $\G= \amalg_{\lambda\in\Lambda} \G^\lambda$ be the partition of $\G$ into connected components. For each~$\lambda\in\Lambda$, write $V^\lambda$ for the $\G$-module structure on $V$ induced by the inclusion $\G^{\lambda}\longhookrightarrow \G$. Then the family of inclusions $(\G^{\lambda}\longhookrightarrow \G)_{\lambda\in\Lambda}$ induces an isometric isomorphism
\begin{align*}
 H^{\ast}_b(\G;V) &\longrightarrow \prod_{\lambda\in\Lambda}^{\|\cdot\|} H^{\ast}_b(\G^{\lambda};V^{\lambda})
\end{align*}
\end{prop}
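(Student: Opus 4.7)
The plan is to establish the isomorphism at the cochain level and then pass to cohomology.

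I would first observe that the inhomogeneous Bar complex decomposes along connected components. For each $n \in \N$ and each $e \in \ob \G^{\lambda}$, every $(n+1)$-path $(g_0,\dots,g_n) \in P_n(\G)_e$ ending at $e$ stays entirely in $\G^{\lambda}$, since the matching conditions $s(g_i) = t(g_{i+1})$ together with $t(g_0) = e$ prevent crossings between components. Thus $P_n(\G)_e = P_n(\G^{\lambda})_e$ and $C_n(\G)_e = C_n(\G^{\lambda})_e$ as normed $\G^{\lambda}$-modules. Because $\G$ admits no morphisms between distinct components, the equivariance condition on a bounded cochain $(w_e)_{e \in \ob \G} \in B_{\G}(C_n(\G), V)$ decouples across $\lambda$, and such a cochain is precisely a family $(w^\lambda)_{\lambda \in \Lambda}$ with each $w^\lambda \in B_{\G^\lambda}(C_n(\G^\lambda), V^\lambda)$ subject only to uniform boundedness. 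Combined with the identity $\sup_{e \in \ob \G}\|w_e\|_{\infty} = \sup_{\lambda} \|w^\lambda\|_{\infty}$, this yields a canonical isometric isomorphism
\[
 \Phi^n \colon C^n_b(\G; V) \xrightarrow{\;\cong\;} \prod_{\lambda \in \Lambda}^{\|\cdot\|} C^n_b(\G^\lambda; V^\lambda)
\]
of normed $\R$-modules, natural in $n$, commuting with the Bar coboundaries, and induced by the family of inclusions $\iota^\lambda \colon \G^\lambda \longhookrightarrow \G$.

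Passing to cohomology of $\Phi^{\ast}$ gives an isomorphism $H^n_b(\G;V) \cong H^n(\prod_{\lambda}^{\|\cdot\|} C^{\ast}_b(\G^\lambda; V^\lambda))$ of semi-normed $\R$-modules. To identify the latter with $\prod_\lambda^{\|\cdot\|} H^n_b(\G^\lambda; V^\lambda)$, I would study the natural comparison $[(w^\lambda)_\lambda] \longmapsto ([w^\lambda])_\lambda$. This is well-defined and $\R$-linear, and the inequality $\|[w]\|_{H^n_b(\G;V)} \geq \sup_\lambda \|[w^\lambda]\|_{H^n_b(\G^\lambda;V^\lambda)}$ is immediate from $\|H^n_b(\iota^\lambda)\| \leq 1$. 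Surjectivity follows by picking, for each $[w^\lambda]$, a bounded representative $\tilde w^\lambda$ with $\|\tilde w^\lambda\|_{\infty} \leq \|[w^\lambda]\| + 1$; the resulting uniformly bounded family is a cocycle of the product complex mapping to $([w^\lambda])_\lambda$.

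The main obstacle is the reverse semi-norm bound (and with it, injectivity). Given $\epsilon > 0$ and $[w] = [(w^\lambda)_\lambda]$, one would pick componentwise cochains $u^\lambda \in C^{n-1}_b(\G^\lambda; V^\lambda)$ with $\|w^\lambda - d u^\lambda\|_{\infty} \leq \|[w^\lambda]\| + \epsilon$. The perturbed cocycle tuple $(w^\lambda - du^\lambda)_\lambda$ is then uniformly bounded by $\sup_\lambda \|[w^\lambda]\| + \epsilon$, but the $u^\lambda$ themselves need not be uniformly bounded in $\lambda$, so the tuple does not immediately assemble into an element of $C^{n-1}_b(\G; V) \cong \prod_\lambda^{\|\cdot\|} C^{n-1}_b(\G^\lambda; V^\lambda)$ certifying that $[w] = [(w^\lambda - du^\lambda)]$ in $H^n_b(\G;V)$. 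I would resolve this by modifying each $u^\lambda$ by a cocycle in $C^{n-1}_b(\G^\lambda; V^\lambda)$ (which does not affect $du^\lambda$) to enforce uniform boundedness, using that both $\|w^\lambda\|$ and $\|w^\lambda - du^\lambda\|$ are uniformly bounded in $\lambda$. Letting $\epsilon \to 0$ then gives the matching semi-norms and injectivity, completing the proof.
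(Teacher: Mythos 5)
Your first paragraph is exactly the paper's proof: the paper's argument consists precisely of the observation that the splitting of $C_{\ast}(\G)$ along connected components is preserved by $B_{\G}(\;\cdot\;,V)$ and that the resulting norm is the product norm, i.e.\ of your isometric cochain-level identification $C^n_b(\G;V)\cong \prod_{\lambda\in\Lambda}^{\|\cdot\|} C^n_b(\G^{\lambda};V^{\lambda})$. Your further observations -- that the comparison map to $\prod_{\lambda}^{\|\cdot\|} H^n_b(\G^{\lambda};V^{\lambda})$ is well defined and surjective, and that $\|[w]\|\geq \sup_{\lambda}\|[w^{\lambda}]\|$ because the restrictions are norm non-increasing -- are correct, and you are right that the remaining inequality and injectivity are where all the content lies.

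The gap is your final step. Replacing $u^{\lambda}$ by $u^{\lambda}+z^{\lambda}$ with $z^{\lambda}$ a cocycle does not change $\delta u^{\lambda}$, but nothing you invoke bounds $\inf_{z\in Z^{n-1}}\|u^{\lambda}+z\|_{\infty}$: the uniform bounds on $\|w^{\lambda}\|_{\infty}$ and $\|w^{\lambda}-\delta u^{\lambda}\|_{\infty}$ give no control at all on how large the smallest bounded primitive of the coboundary $\delta u^{\lambda}$ must be. What your argument actually requires is an estimate, uniform in $\lambda$, of the form ``every coboundary in $C^n_b(\G^{\lambda};V^{\lambda})$ has a primitive whose norm is controlled by the norm of the coboundary''. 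This is not formal: cohomology does not commute with $\prod^{\|\cdot\|}$ for general normed cochain complexes (consider the family $0\to\R\xrightarrow{\,1/k\,}\R\to 0$, $k\in\N$, where each $H^1$ vanishes but $H^1$ of the normed product does not), and for a single fixed $\lambda$ the existence of such a constant is, by the open mapping theorem, equivalent to the space of coboundaries being closed, i.e.\ to the semi-norm on $H^n_b(\G^{\lambda};V^{\lambda})$ being a norm -- a condition that is known to fail in general, since non-trivial classes of zero semi-norm do occur in bounded cohomology. So ``enforce uniform boundedness by adding cocycles'' is exactly the assertion that needs proof, and as written this step would fail; note also that the paper's own proof does not go beyond the cochain-level splitting, so it offers no patch you could import. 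To make your route complete you must either supply this uniform control in the situation at hand or restrict to finitely many components, where the infimum can be taken componentwise and the issue disappears.
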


\begin{proof}
  We see directly that the splitting of $C_{\ast}(\G)$ with respect to the connected components is preserved after applying $B_{\G}(\;\cdot\;, V)$ and that the norm is exactly the the product norm. 
\end{proof}
\subsection{Relative Bounded Cohomology}
We now define bounded cohomology for pairs of groupoids. Using this definition, we derive a long exact sequence for bounded cohomology of groupoids. Then, we show that relative bounded cohomology is a homotopy invariant. Finally, we discuss the special case of a group relative to a family of subgroups and get a long exact sequence relating bounded cohomology of a group relative to a  family of subgroups to the regular bounded homology of the group and the subgroups.  
\begin{def.}
 Let $i\colon \A \longhookrightarrow \G$ be a groupoid pair, i.e. the canonical inclusion of a subgroupoid $\A\subset \G$. Let $V$ be a Banach $\G$-module.
\begin{enumerate}
\item The map 
\begin{align*}
 C^{\ast}_b(i;V)\colon C^{\ast}_b(\G;V) &\longrightarrow C^{\ast}_b(\A,i^{\ast} V)\\
 (f_e)_{e\in\ob\G}&\longmapsto (f_e|_{C_{\ast}(\A)_e})_{e\in\ob\A}
\end{align*}
is surjective. Its kernel
\[
 C^{\ast}_b(\G,\A;V):= \{f\in B_{\G}(C_{\ast}(\G),V) \mid \fa{e\in\ob \A} f_e|_{C_{\ast}(\A)_e} =0\},
\]
endowed with the induced norm on the subspace, is a normed $\R$-cochain complex. We write $\iota^{\ast}_{(\G,\A;V)}\colon C^{\ast}_b(\G,\A;V) \longrightarrow C^{\ast}_b(\G;V)$ for the canonical inclusion.
\item We call 
\[
 H^{\ast}_b(\G,\A;V):= H^{\ast}(C^{\ast}_b(\G,\A;V))
\]
the \emph{bounded cohomology of $\G$ relative to $\A$ with coefficients in $V$}.
\end{enumerate}
\end{def.}
We define domain categories $\GrppBan$ and $\GrppBanc$ analogously to $\GrpBan$ and $\GrpBanc$ by considering pairs of groupoids instead of groupoids. 
\begin{def.}
 Let $(f,\varphi)\colon ((\G,\A),V)\longrightarrow ((\HG,\B),W)$ be a morphism in $\GrppBanc$. Then $C^{\ast}_b(f,\varphi)$ restricts to a bounded cochain map 
\begin{align*}
 C^{\ast}_{b}(f,f|_{\A};\varphi):= C^{\ast}_b(f;\varphi)|_{C^{\ast}_b(\HG,\B;W)}\colon C^{\ast}_{b}(\HG,\B;W)\longrightarrow C^{\ast}_{b}(\G,\A;V).
\end{align*}
This induces a continuous map between graded normed modules
\begin{align*}
  H^{\ast}_{b}(f,f|_{\A};\varphi)\colon H^{\ast}_{b}(\HG,\B;W)\longrightarrow H^{\ast}_{b}(\G,\A;V).
\end{align*}
This defines a contravariant functor $\GrppBanc\longrightarrow \Modgrn$.
\end{def.}

As for the above definition, we just note that the right-hand square in the following diagram commutes by definition, so the map on the left-hand side is defined:
\begin{center}
\begin{tikzpicture}
\matrix (m) [matrix of math nodes, row sep=3.5em,
column sep=3.5em, text height=1.5ex, text depth=0.25ex]
{0&C_b^{\ast}(\HG,\B;W)&C^{\ast}_b(\HG;W) & C^{\ast}_b(\B;i_{\B}^{\ast}W)&0\\
0&C_b^{\ast}(\G,\A;V)& C^{\ast}_b(\G;V)& C^{\ast}_b(\A;i_{A}^{\ast}V)&0
\\ };
\path[->]
(m-1-1) edge (m-1-2)
(m-1-2) edge node[auto] {$\iota_{(\HG,\B;W)}^{\ast}$} (m-1-3)
(m-1-4) edge (m-1-5)
(m-2-1) edge (m-2-2)
(m-2-2)  edge node[below] {$\iota_{(\G,\A;V)}^{\ast}$}(m-2-3)
(m-2-4) edge (m-2-5)
(m-1-2) edge node[auto] {$ C^{\ast}_b(f,f|_{\A};\varphi)$}  (m-2-2)
(m-1-3) edge node[auto] {$ C^{\ast}_b(i_\B;V)$}(m-1-4)
(m-1-3) edge node[auto] {$ C^{\ast}_b(f;\varphi)$}  (m-2-3)
(m-2-3) edge node[below] {$C^{\ast}_b(i_\A;V)$}  (m-2-4)
(m-1-4) edge node[auto] {$ C^{\ast}_b(f|_{\A};i^{\ast}_\A \varphi) $}  (m-2-4)
;
\end{tikzpicture}
\end{center}
Here $i_{\A}^{\ast}\varphi \colon f|_{\A}^{\ast} i_{\B}^{\ast} W = i_{\A}^{\ast} f^{\ast} V\longrightarrow i_{\A}^{\ast} V$ denotes the $\A$-map  $(\varphi_a)_{a\in\ob\A}$, i.e., by restricting the family corresponding to $\varphi$ to objects in $\A$. Functoriality then follows from the functoriality of the right-hand square. 

\begin{prop} Let $i_{\A}\colon \A\longrightarrow \G$ be the inclusion of a subgroupoid .There is a natural (with respect to morphisms in $\GrppBanc$) long exact sequence
\makebox[12.5cm]{%
\begin{centering}
\begin{tikzpicture}[ampersand replacement=\&]
\matrix (m) [scale = 0.5,matrix of math nodes, row sep=1.5em,
column sep=1.5em, text height=1.5ex, text depth=0.25ex]
{\cdots\&H_b^\ast(\G,\A;V)\&H_b^\ast(\G;V)\&H^\ast_b(\A;i_{\A}^{\ast}V)\&H_b^{\ast+1}(\G,\A;V)\&\cdots
\\ };
\path[->]
(m-1-1) edge node[auto] {}(m-1-2)
(m-1-2) edge node[above = 3pt] {$ H^{\ast}(\iota_{(\G,\A;V)}^\ast) $} (m-1-3)
(m-1-3) edge node[above = 3pt] {$ H^\ast_b(i_{\A};V) $}   (m-1-4)
(m-1-4) edge node[above = 3pt] {$  \delta^\ast$}  (m-1-5)
(m-1-5) edge node[auto] {$  $}  (m-1-6)
;
\end{tikzpicture}
\end{centering}
}

\noindent
such that $\delta^\ast$ is continuous with respect to the induced semi-norms. 
\end{prop}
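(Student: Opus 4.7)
The plan is to apply the zigzag lemma to the short exact sequence of normed $\R$-cochain complexes
\[
0 \longrightarrow C^{\ast}_b(\G,\A;V) \xrightarrow{\iota^{\ast}_{(\G,\A;V)}} C^{\ast}_b(\G;V) \xrightarrow{C^{\ast}_b(i_{\A};V)} C^{\ast}_b(\A;i_{\A}^{\ast}V) \longrightarrow 0,
\]
which is exact by construction: the left map is the inclusion of the kernel of the restriction (by the very definition of $C^{\ast}_b(\G,\A;V)$), and the restriction is surjective as already noted. The only point that requires genuine care is continuity of the connecting morphism $\delta^{\ast}$; this reduces to exhibiting a norm non-increasing (purely $\R$-linear) section of $C^{\ast}_b(i_{\A};V)$.

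To build such a section, I would exploit the fact that every simplex $(g_0,\dots,g_n)\in C_n(\G)_e$ is the image under the $\G$-action of $g_0$ of the canonical orbit representative $(\id_{s(g_0)},g_1,\dots,g_n)$ (interpreted as $\id_{s(g_0)}$ when $n=0$). Given $\alpha\in C^n_b(\A;i_{\A}^{\ast}V)$, I would define $\tilde\alpha\in C^n_b(\G;V)$ on basis simplices by
\[
\tilde\alpha_e(g_0,\dots,g_n):=\begin{cases}g_0\cdot\alpha_{s(g_0)}(\id_{s(g_0)},g_1,\dots,g_n)&\text{if }s(g_0)\in\ob\A\text{ and }g_1,\dots,g_n\in\A,\\0&\text{otherwise,}\end{cases}
\]
and extend $\R$-linearly. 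Since both the defining condition and the formula depend only on the $\G$-orbit of a simplex, $\tilde\alpha$ is $\G$-equivariant; the $\A$-equivariance of $\alpha$ forces $\tilde\alpha$ to restrict to $\alpha$ on $C_{\ast}(\A)$; and because the $\G$-action on $V$ is isometric, $\|\tilde\alpha\|_{\infty}\leq\|\alpha\|_{\infty}$. The assignment $\alpha\mapsto\tilde\alpha$ is the desired section.

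With this section in hand, the connecting morphism is the usual one: for a cocycle $\alpha$, its coboundary $d\tilde\alpha$ restricts to $d\alpha=0$ on $\A$ and hence lies in $C^{n+1}_b(\G,\A;V)$, so set $\delta^n[\alpha]:=[d\tilde\alpha]$. Well-definedness, independence of the chosen lift, and exactness of the resulting long sequence then follow from the standard diagram chase. Continuity drops out of the norm bounds: since the coboundary on $C^{\ast}_b$ is induced by $\partial_{\ast+1}$ and $\|\partial_{n+1}\|_{\infty}\leq n+2$,
\[
\|\delta^n[\alpha]\|\leq\|d\tilde\alpha\|_{\infty}\leq(n+2)\|\tilde\alpha\|_{\infty}\leq(n+2)\|\alpha\|_{\infty},
\]
and taking the infimum over cocycle representatives of $[\alpha]$ yields $\|\delta^n[\alpha]\|\leq(n+2)\|[\alpha]\|$. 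Naturality with respect to morphisms in $\GrppBanc$ follows at once from the commutative diagram of short exact sequences displayed just above the statement.

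The main obstacle is precisely the construction of the norm non-increasing section. A pair of groupoids does not in general admit a retraction $\G\to\A$, so one cannot extend cochains by plain functoriality; working instead with canonical $\G$-orbit representatives of bar-simplices and exploiting the $\A$-equivariance of $\alpha$ is what makes the extension simultaneously well-defined, $\G$-equivariant, and norm preserving.
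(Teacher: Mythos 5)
Your proposal is correct and follows the same route as the paper, which simply applies the snake lemma to the short exact sequence of cochain complexes and notes that one must ``take care that the boundary map is continuous.'' Your explicit norm non-increasing $\R$-linear section $\alpha\mapsto\tilde\alpha$ (built from the canonical $\G$-orbit representatives $(\id_{s(g_0)},g_1,\dots,g_n)$) is a correct and complete way to supply exactly the detail the paper leaves implicit, and your equivariance, restriction, and norm estimates all check out.
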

\begin{proof}
 Take care that the boundary map is continuous in the usual diagram chase in the proof of the snake lemma. 
\end{proof}

\begin{prop}\label{p:relhomotopy} \hfill
\begin{enumerate}
\item Let $f,g\colon (\G,\A)\longrightarrow (\HG,\B)$ be maps of pairs of groupoids and $V$ be an~$\HG$-module. If $f\simeq_{\A,\B} g$ via a homotopy $h$, then there is a canonical $\R$-cochain homotopy between $C^{\ast}_b(f,f|_{\A};V)$ and $C^{\ast}_b(g,g|_{\A};V\circ \overline h)$.
\item In particular,
\[
 H^\ast_b (f,f|_{\A};V) = H^{\ast}_b(\id_{\G},\id_{\A}; V\circ \overline{h})\circ H^{\ast}_b(g,g|_{\A};V);
\]
and the map $H^{\ast}_b(\id_{\G},\id_{\A}; V\circ \overline{h})$ is an isometric isomorphism. 
\end{enumerate}
\end{prop}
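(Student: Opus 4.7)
The plan is to reduce the claim to the absolute Proposition~\ref{p:bgrphomotopy} by observing that, once the homotopy $h$ is relative to $\A$, the $\G$-chain homotopy $s^h_\ast$ constructed there already restricts to an $\A$-chain homotopy on $C_\ast(\A)$; dually, the induced cochain homotopy will then preserve the subcomplex of cochains vanishing on $C_\ast(\B)$.

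First I would inspect the explicit formula for each summand
\[
 s^i_n(g_0,\dots,g_n) = (f_0(g_0),\dots,f_0(g_i),h_{s(g_i)},f_1(g_{i+1}),\dots,f_1(g_n))
\]
of $s^h_n$ from Proposition~\ref{p:bgrphomotopy} and check that on a simplex $(a_0,\dots,a_n)\in P_n(\A)_e$ with $e\in \ob\A$ the image lies in $P_{n+1}(\B)_{f_0(e)}$. The outer entries lie in $\B$ because $f_0,f_1$ are maps of pairs, and the middle entry $h_{s(a_i)}$ lies in $\B$ precisely by the hypothesis $f\simeq_{\A,\B} g$. Hence $s^h_\ast$ restricts to a bounded $\A$-chain homotopy from $C_\ast(f|_\A)$ to $h|_\A\cdot C_\ast(g|_\A)$.

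Next I would apply $B_\G(\,\cdot\,,V)$ and restrict, exactly as in Proposition~\ref{p:bgrphomotopy}(ii). The resulting cochain homotopy $s^\ast_{V,h}$ evaluates a cochain on precisely the simplices used in $s^h_\ast$; by the previous paragraph, a cochain in $C^\ast_b(\HG,\B;V)$ (i.e.\ one vanishing on $C_\ast(\B)$) is sent to a cochain vanishing on $C_\ast(\A)$, i.e.\ to an element of $C^{\ast-1}_b(\G,\A;f^\ast V)$. This yields the desired cochain homotopy of~(i).

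Passing to cohomology then immediately gives the factorisation in~(ii). To see that $H^\ast_b(\id_\G,\id_\A;V\circ\overline h)$ is an isometric isomorphism, I would invoke Lemma~\ref{l:hinduced}: the bounded $\G$-map $V\circ\overline h\colon f^\ast V\longrightarrow g^\ast V$ is a $\G$-isometry with inverse $V\circ h$, so it induces a norm-preserving isomorphism on the whole absolute Bar cochain complex, and, being defined object-wise, also on the subcomplex $C^\ast_b(\G,\A;\,\cdot\,)$. There is no genuine obstacle here; the only point requiring care is to verify that the ``relative $\A$'' hypothesis on $h$ enters exactly once, namely in guaranteeing that $h_{s(a_i)}\in\B$ above so that the restriction of $s^h_\ast$ actually lands in $C_{\ast+1}(\B)$ rather than merely in $C_{\ast+1}(\HG)$.
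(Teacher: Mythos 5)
Your proposal is correct and follows essentially the same route as the paper: both reduce to Proposition~\ref{p:bgrphomotopy} and observe that, since $f,g$ are maps of pairs and the homotopy is relative, the chain homotopy $s^h_\ast$ carries $\A$-simplices into $\B$-simplices, so the dual cochain homotopy $s^\ast_{V,h}$ restricts to the relative subcomplexes (the paper phrases this as the commutativity of the square comparing $s^\ast_{V,h}$ with $s^\ast_{i_\B^\ast V,\,h|_\A}$, which is exactly the containment you verify directly). Your treatment of (ii) via Lemma~\ref{l:hinduced} likewise matches the paper's argument that $C^\ast_b(\id_\G,\id_\A;V\circ\overline h)$ is an isometric isomorphism because its absolute constituents are.
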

\begin{proof}\hfill
 \begin{enumerate}
  \item  Let $s^{\ast}_{V,h}$ and $ s^{\ast}_{i_{\B}^{\ast}V,h|_{\A}}$ denote the cochain homotopies induced by $h$ and $h|_{\A}$ constructed in Proposition~\ref{p:bgrphomotopy}.  The right-hand side square  of the following diagram commutes by definition:
\begin{center}
\begin{tikzpicture}
\matrix (m) [matrix of math nodes, row sep=3.5em,
column sep=1.em, text height=1.5ex, text depth=0.25ex]
{0&C_b^{\ast}(\HG,\B;V)&&C^{\ast}_b(\HG;V) && C^{\ast}_b(\B;i_{\B}^{\ast}V)&0\\
0&C_b^{\ast-1}(\G,\A;f^{\ast}V)&& C^{\ast-1}_b(\G;f^{\ast}V)&& C^{\ast-1}_b(\A;i_{A}^{\ast}f^{\ast}V)&0.
\\ };
\path[->]
(m-1-1) edge (m-1-2)
(m-1-2) edge node[auto] {$\iota_{(\HG,\B;V)}^{\ast}$} (m-1-4)
(m-1-6) edge (m-1-7)
(m-2-1) edge (m-2-2)
(m-2-2)  edge node[below] {$\iota_{(\G,\A;f^{\ast}V)}^{\ast-1}$}(m-2-4)
(m-2-6) edge (m-2-7)
(m-1-2) edge node[auto] {$ s^{\ast}_{V,h}|_{C^{\ast}(\HG,\B;V)}$}  (m-2-2)
(m-1-4) edge node[auto] {$ C^{\ast}_b(i_\B;V)$}(m-1-6)
(m-1-6) edge node[auto] {$ s^{\ast}_{i_{\B}^{\ast}V,h|_{\A}}$}  (m-2-6)
(m-2-4) edge node[below] {$C^{\ast-1}_b(i_\A;f^{\ast}V)$}  (m-2-6)
(m-1-4) edge node[auto] {$ s^{\ast}_{V,h} $}  (m-2-4)
;
\end{tikzpicture}
\end{center}
Hence the map on the left-hand side is defined.  The map $s^{\ast}_{i_{\B}^{\ast}V,h|_{\A}}$ is a cochain homotopy between  $C^{\ast}_b(f|_{\A},i_{\B}^{\ast}V)= C^{\ast}_b(f|_{\A}, i_{\A}^{\ast} \id_{f^{\ast} V})$ and~$C^{\ast}_b(g|_{\A}, (V\circ\overline{h|_{\A}}))= C^{\ast}_b(g|_{\A}, i_{\A}^{\ast} (V\circ\overline{h})).$
Hence, by comparing the diagrams defining $C^{\ast}_b(f,f|_{\A};V)$, $C^{\ast}_b(g,g|_{\A};V)$ and $s^{\ast}_{V,h}|_{C^{\ast}(\HG,\B;V)}$, we see that $s^{\ast}_{V,h}|_{C^{\ast}(\HG,\B;V)}$ defines an $\R$-cochain homotopy between $C^{\ast}_b(f,f|_{\A};V)$ and $C^{\ast}_b(g,g|_{\A};V)$.

\item The map $C^{\ast}_b(\id_{\G},\id_{\A}; V\circ \overline{h})$ is an isometric cochain isomorphism, since $C^{\ast}_b(\id_{\G}; V\circ \overline{h})$ and $C^{\ast}_b(\id_{\A}; i_{A}^{\ast}(V\circ \overline{h}))=C^{\ast}_b(\id_{\A}; i_{\A}^{\ast}V\circ \overline{h|_{\A}}))$ are isometric cochain isomorphisms. \qedhere
\end{enumerate}

\end{proof}
\begin{cor}\label{c:homotopyinvrelcon}
 Let $f\colon (\G,\A)\longrightarrow (\HG,\B)$ be an equivalence relative $(\A,\B)$, i.e., there exists a map $g\colon (\HG,\B)\longrightarrow (\G,\A)$, such that $g\circ f\simeq_{\A,\B} \id_G$ and~$f\circ g\simeq_{\B,\A} \id_\HG$. Then \[H^{\ast}_b(f,f|_{\A};V)\colon H^\ast_b(\HG,\B;V)\longrightarrow H_b^\ast(\G,\A;f^{\ast} V)\] is an isometric isomorphism.
\end{cor}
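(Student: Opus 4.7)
The plan is to combine Proposition~\ref{p:relhomotopy} with the contravariant functoriality of $H^\ast_b$ on $\GrppBanc$ to produce a two-sided inverse for $H^\ast_b(f,f|_\A;V)$, and then to promote the resulting $\R$-isomorphism to an isometry by a norm-sandwich argument at the chain level. Fix homotopies $h\colon g\circ f\simeq\id_\G$ (relative to $\A$) and $k\colon f\circ g\simeq\id_\HG$ (relative to $\B$) witnessing the hypothesis.

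For the isomorphism step, I would apply Proposition~\ref{p:relhomotopy}(ii) twice: to the pair of self-maps $(g\circ f,\id_\G)$ of $(\G,\A)$ with coefficients in the $\G$-module $f^{\ast}V$, and to the pair $(f\circ g,\id_\HG)$ with coefficients in $V$. In each case one factor on the right-hand side is just the identity on cohomology, while the other --- $H^\ast_b(\id_\G,\id_\A;f^{\ast}V\circ\overline{h})$ and $H^\ast_b(\id_\HG,\id_\B;V\circ\overline{k})$ respectively --- is an isometric isomorphism. Hence both $H^\ast_b(g\circ f,(g\circ f)|_\A;f^{\ast}V)$ and $H^\ast_b(f\circ g,(f\circ g)|_\B;V)$ are isometric isomorphisms of graded semi-normed $\R$-modules.

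Next, invoking the contravariant functoriality of $H^\ast_b\colon \GrppBanc\to\Modgrn$, these two maps decompose as
\begin{align*}
H^\ast_b(f\circ g;V) &= H^\ast_b(g,g|_\B;f^{\ast}V)\circ H^\ast_b(f,f|_\A;V),\\
H^\ast_b(g\circ f;f^{\ast}V) &= H^\ast_b(f,f|_\A;g^{\ast}f^{\ast}V)\circ H^\ast_b(g,g|_\B;f^{\ast}V).
\end{align*}
The first identity supplies a right inverse to $H^\ast_b(g,g|_\B;f^{\ast}V)$ and the second a left inverse, so $H^\ast_b(g,g|_\B;f^{\ast}V)$ is an isomorphism; substituting back into the first shows that $H^\ast_b(f,f|_\A;V)$ is a composition of isomorphisms and is therefore itself an isomorphism of graded $\R$-modules.

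Finally, for the isometry, both cochain maps $C^\ast_b(f,f|_\A;V)$ and $C^\ast_b(g,g|_\B;f^{\ast}V)$ are norm non-increasing, since they are induced by $C_\ast(f)$ and $C_\ast(g)$, which send bar-simplices to bar-simplices and hence have operator norm at most $1$, with identity coefficient maps. The induced maps on cohomology satisfy the same bound with respect to the induced semi-norms, so since their composition $H^\ast_b(f\circ g;V)$ is isometric, the inequality chain
\[
\|x\|=\|H^\ast_b(f\circ g;V)(x)\|\le\|H^\ast_b(f,f|_\A;V)(x)\|\le\|x\|\qquad(x\in H^\ast_b(\HG,\B;V))
\]
forces equality throughout and hence $H^\ast_b(f,f|_\A;V)$ is isometric. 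The only genuine subtlety I anticipate, although not a serious obstacle, is keeping careful track of the cascade of pulled-back coefficient modules ($V$, $f^{\ast}V$, $g^{\ast}f^{\ast}V$, and their twists along $\overline{h}$ and $\overline{k}$) so that every appeal to Proposition~\ref{p:relhomotopy} and to functoriality is made with coefficient data that agree on the nose.
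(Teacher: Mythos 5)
Your argument is correct and is exactly the (implicit) argument behind the paper's unproved corollary: apply Proposition~\ref{p:relhomotopy}(ii) to the two relative homotopies, use contravariant functoriality of $H^\ast_b$ on $\GrppBanc$ to get the two-sided inverse, and recover isometry from the fact that $C^\ast_b(f,f|_\A;V)$ and $C^\ast_b(g,g|_\B;f^\ast V)$ are norm non-increasing while the composites are isometric. Your bookkeeping of the pulled-back coefficients ($V$, $f^\ast V$, $g^\ast f^\ast V$) is the only delicate point, and you handle it correctly since $H^\ast_b(g,g|_\B;f^\ast V)$ occurs with the same coefficient data in both factorizations.
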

\begin{cor}
Let $(\G,\A)$ be a pair of connected groupoids with vertex group~$G$ and $A$ respectively. Then we get an isometric isomorphism
\[
 H_b^\ast(\G,\A;\R_\G) \cong H^\ast_b(G,A;\R).
\]
\end{cor}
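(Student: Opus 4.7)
The plan is to exhibit $(\G,\A)$ as being equivalent, as a pair of groupoids, to the one-vertex pair associated to the group pair $(G,A)$, and then invoke Corollary~\ref{c:homotopyinvrelcon} together with Remark~\ref{r:boundedell1conincide}.

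First, pick a vertex $e\in\ob\A$; via the inclusion $i\colon\A\longhookrightarrow\G$, $e$ is also a vertex of $\G$. Then the vertex groups satisfy $G\cong \G_e$ and $A\cong \A_e$, and under these identifications the inclusion $\A_e\longhookrightarrow \G_e$ corresponds to $A\longhookrightarrow G$. Viewing a group pair as a one-vertex groupoid pair, it therefore suffices to construct an equivalence of pairs $(\G,\A)\simeq_{\A_e,\G_e}(\G_e,\A_e)$ relative to the obvious inclusions.

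Next, I would upgrade the proof of Theorem~\ref{t:classify} to the relative setting. Since $\A$ is connected, choose for every $f\in\ob\A$ a morphism $\gamma_f\colon f\to e$ in $\A$ with $\gamma_e=\id_e$; since $\G$ is connected, extend this to a choice of $\gamma_f\colon f\to e$ in $\G$ for all remaining $f\in\ob\G$. Define a retraction $p\colon\G\longrightarrow\G_e$ by $p(f)=e$ on objects and $p(g)=\gamma_{t(g)}\circ g\circ\gamma_{s(g)}^{-1}$ on morphisms. By construction $p\circ i_e=\id_{\G_e}$, and the family $h=(\gamma_f^{-1})_{f\in\ob\G}$ is a natural equivalence from $i_e\circ p$ to $\id_\G$. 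Because we chose $\gamma_f\in\A$ whenever $f\in\ob\A$, the map $p$ restricts to a groupoid map $p|_\A\colon\A\longrightarrow\A_e$, and the homotopy $h$ restricts to a homotopy $h|_\A$ of maps $\A\to\A$; that is, $h$ is a homotopy relative~$\A$ between $i_e\circ p$ and $\id_\G$ in the sense of the definition preceding Section~2.2. Hence $(p,p|_\A)$ and $(i_e,i_e|_{\A_e})$ exhibit the desired equivalence of groupoid pairs.

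Finally, apply Corollary~\ref{c:homotopyinvrelcon} to $(i_e,i_e|_{\A_e})$ with coefficients $\R_\G$: since the $\G$-action on $\R_\G$ is trivial, the induced $\G_e$-module $i_e^\ast\R_\G$ is just the trivial Banach $\G_e$-module $\R$. We obtain an isometric isomorphism
\[
H_b^\ast(\G,\A;\R_\G)\longrightarrow H_b^\ast(\G_e,\A_e;\R).
\]
Combining with the identification of the one-vertex groupoid pair $(\G_e,\A_e)$ with the group pair $(G,A)$, and using Remark~\ref{r:boundedell1conincide} to equate the groupoidal and ordinary bounded cohomology of a group, yields the claimed isometric isomorphism $H_b^\ast(\G,\A;\R_\G)\cong H_b^\ast(G,A;\R)$.

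The only subtle step is the compatible choice of the morphisms $\gamma_f$: one must choose them inside $\A$ whenever $f\in\ob\A$ (using connectedness of $\A$) before extending arbitrarily to $\ob\G\smallsetminus\ob\A$. Once this is arranged, everything is formal from Theorem~\ref{t:classify} and Corollary~\ref{c:homotopyinvrelcon}.
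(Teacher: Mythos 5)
Your proof is correct and follows essentially the same route as the paper: the paper's proof simply asserts that $(\G,\A)$ and $(G,A)$ are equivalent relative $(\A,A)$ and then invokes the relative homotopy invariance of Corollary~\ref{c:homotopyinvrelcon}. You have merely spelled out the ``straightforward'' equivalence, correctly noting the key point that the connecting morphisms $\gamma_f$ must be chosen inside $\A$ for $f\in\ob\A$.
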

\begin{proof}
 As in the absolute case, it is straightforward to see that $(\G,\A)$ and~$(G,A)$ are equivalent relative ($\A,A)$.
\end{proof}
\begin{def.}
 Let $G$ be a group and $(A_i)_{i\in I}$ be a family of subgroups. Let $V$ be a Banach $G$-module. We define
\[
H^{\ast}_b(G,(A_i)_{i\in I};V) :=  H^{\ast}_b(G_I,\amalg_{i\in I} A_i;V_I)
\]
This is functorial in the obvious way.
\end{def.}
\begin{lemma}\label{l:htpinclusion}
 Let $G$ be a group and $I$ a set. For each $i\in I$ let $l_i\colon G\longrightarrow G_I$ denote the canonical inclusion as a vertex group. Then 
\[
 H_b^{\ast}(l_i;V_I) = H_b^{\ast}(l_j;V_I)
\]
for all $i,j\in I$ and for all Banach $G$-modules $V$. 
\end{lemma}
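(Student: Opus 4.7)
The plan is to produce a homotopy between $l_j$ and $l_i$ and invoke Proposition~\ref{p:bgrphomotopy}(iii), arranging that the induced twist on coefficients is trivial so that the two induced maps in bounded cohomology literally coincide.

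First I would define a homotopy $h\colon l_j \Rightarrow l_i$. Since $G$ has a single object $\ast$, such a natural transformation is determined by one morphism $h_\ast \in \Mor_{G_I}(j,i)$, which by Definition~\ref{d:blowup} is just an element of $G$. The naturality square for an arbitrary $g \in G = \Mor_G(\ast,\ast)$ reduces to the equation $h_\ast \cdot g = g \cdot h_\ast$ in $G$. Setting $h_\ast = e_G$, the neutral element of $G$, makes this automatic, so $h$ is genuinely a homotopy from $l_j$ to $l_i$ in $\Grp$.

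Applying Proposition~\ref{p:bgrphomotopy}(iii) with $f_0 = l_i$, $f_1 = l_j$ and coefficients $V_I$ then gives
\[
 H_b^\ast(l_i; V_I) \;=\; H_b^\ast(\id_G;\, V_I \circ \overline h)\circ H_b^\ast(l_j; V_I),
\]
so it remains to check that $H_b^\ast(\id_G; V_I \circ \overline h)$ is the identity. By construction of $V_I$ as a Banach $G_I$-module, each morphism $k \in G = \Mor_{G_I}(a,b)$ acts on $V_I$ via the given $G$-action on $V$; in particular $\overline h_\ast = e_G^{-1} = e_G$ acts as $\id_V$. Hence $V_I \circ \overline h\colon l_i^\ast V_I \longrightarrow l_j^\ast V_I$ is the identity of $V$, and functoriality of $H_b^\ast$ yields the claim.

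There is essentially no obstacle beyond bookkeeping; the whole argument rests on the observation that in $G_I$ the neutral element $e_G$ simultaneously (a) represents a \emph{non-identity} morphism between any two distinct objects $j$ and $i$ and (b) commutes, as an element of $G$, with every other element. These are precisely the two features needed to satisfy naturality of the homotopy and triviality of the coefficient twist.
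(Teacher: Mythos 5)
Your proof is correct and follows essentially the same route as the paper: the paper also defines the homotopy by assigning to the unique vertex of $G$ the neutral element of $G$, viewed as a morphism between the vertices $i$ and $j$ of $G_I$, and then applies Proposition~\ref{p:bgrphomotopy} together with the observation that the induced coefficient twist $V_I\circ \overline h$ is the identity. Your write-up merely makes explicit the naturality check and the triviality of the twist, which the paper leaves implicit.
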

\begin{proof}
 We can define a homotopy between $l_i$ and $l_j$ by setting \[h_1:= \id_G\colon G= G_i \longrightarrow G_j = G.\] Here 1 denotes the unique vertex of $G$. Hence, by Proposition~\ref{p:bgrphomotopy}
\[
 H^{\ast}_b(l_i;V_I) = H_b^{\ast}(\id_G; V_{I}\circ h)\circ H_b^{\ast}(l_j;V_I) = H_b^{\ast}(l_j;V_I).\qedhere
\]
\end{proof}

\begin{thm}
 Let $G$ be a group and $(A_i)_{i\in I}$ a family of subgroups. Let $V$ be a Banach $G$-module. Then there is a natural long exact sequence

\

\makebox[\textwidth]{%
\begin{centering}
\begin{tikzpicture}[ampersand replacement=\&]
\matrix (m) [scale = 0.5,matrix of math nodes, row sep=1.5em,
column sep=1.em, text height=1.5ex, text depth=0.25ex]
{\cdots\&H_b^\ast(G,(A_i)_{i\in I};V)\&H_b^\ast(G;V)\&\prod_{i\in I}^{\|\cdot\|}H^\ast_b(A_i;i_{A_i}^{\ast}V)\&H_b^{\ast+1}(G,(A_i)_{i\in I};V)\&\cdots
\\ };
\path[->]
(m-1-1) edge node[auto] {}(m-1-2)
(m-1-2) edge node[above = 8pt] {$  \iota^{\ast}$} (m-1-3)
(m-1-3) edge node[above = 8pt] {$  (H_{b}^{\ast}(i_{A_i}; V))_{i\in I}
$}   (m-1-4)
(m-1-4) edge node[above = 8pt] {$  d^\ast$}  (m-1-5)
(m-1-5) edge node[auto] {$  $}  (m-1-6)
;
\end{tikzpicture}
\end{centering}
}

\noindent
such that $d^\ast$ is continuous with respect to the induced semi-norm and the product semi-norm. Here 
\[
 \iota^{\ast}:= H_{b}^{\ast}(l_e;V_I)\circ H^{\ast}(\iota_{G_I,A_I,V_I}).
\]
where $l_e\colon G\longrightarrow G_I$ is the canonical inclusions for some vertex $e\in\ob\G$.  

\end{thm}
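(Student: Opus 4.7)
The plan is to apply the long exact sequence for groupoid pairs from the previous proposition to the pair $\amalg_{i\in I} A_i\longhookrightarrow G_I$ with coefficients $V_I$, and then to transport each of the three terms along canonical isometric isomorphisms into the sequence asserted in the statement. Naturality in the triple $(G,(A_i)_{i\in I};V)$ and continuity of the connecting map $d^\ast$ will then be inherited from the corresponding properties of $\delta^\ast$, provided all three identifications are isometric isomorphisms.

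The identifications go as follows. The first term $H_b^\ast(G_I,\amalg_i A_i;V_I)$ equals $H_b^\ast(G,(A_i)_{i\in I};V)$ literally by definition. For the second term, fix any $e\in I$; since the inclusion $l_e\colon G\longrightarrow G_I$ of the vertex group at $e$ is an equivalence of groupoids by Theorem~\ref{t:classify}, Corollary~\ref{c:htpinvghn} yields an isometric isomorphism $H_b^\ast(l_e;V_I)\colon H_b^\ast(G_I;V_I)\longrightarrow H_b^\ast(G;V)$, and Lemma~\ref{l:htpinclusion} tells us that this map does not depend on the choice of $e$. For the third term, Proposition~\ref{p:grpadditive} supplies an isometric isomorphism
\[
H_b^\ast(\amalg_{i\in I} A_i;i_{\amalg A_i}^\ast V_I) \cong \prod_{i\in I}^{\|\cdot\|} H_b^\ast(A_i; i_{A_i}^\ast V),
\]
once we observe that the restriction of $V_I$ to the $i$-th connected component, further restricted to $A_i$, is exactly $i_{A_i}^\ast V$.

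The essential point is then to verify that under these identifications the middle arrow of the transported sequence becomes $(H_b^\ast(i_{A_i};V))_{i\in I}$. For this I will use that for each $i\in I$ the composition $A_i\longhookrightarrow \amalg_{j\in I} A_j\longhookrightarrow G_I$ coincides with $l_i\circ i_{A_i}\colon A_i\longrightarrow G\longrightarrow G_I$; contravariant functoriality of $H_b^\ast$ then factors the $i$-th component of $H_b^\ast(i_{\amalg A_j};V_I)$ as $H_b^\ast(i_{A_i};V)\circ H_b^\ast(l_i;V_I)$. Precomposing with $(H_b^\ast(l_e;V_I))^{-1}$ and invoking Lemma~\ref{l:htpinclusion} to identify $H_b^\ast(l_i;V_I)$ with $H_b^\ast(l_e;V_I)$ for every $i$, the middle map collapses to $(H_b^\ast(i_{A_i};V))_{i\in I}$ as required. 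Continuity of $d^\ast = \delta^\ast\circ H_b^\ast(l_e;V_I)^{-1}$ is then immediate from the continuity of $\delta^\ast$ and the isometry of the three identifications, while naturality in the triple follows from the naturality of the long exact sequence for groupoid pairs together with the functoriality of the three identifications.

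The main obstacle is purely bookkeeping: carefully tracking the coefficient modules $V_I$, $i_{\amalg A_j}^\ast V_I$, $l_i^\ast V_I$, and $i_{A_i}^\ast V$, and keeping the order of composition correct under contravariant functoriality. No genuinely new idea beyond the long exact sequence for groupoid pairs, the classifying theorem, Lemma~\ref{l:htpinclusion}, and additivity over disjoint unions is required.
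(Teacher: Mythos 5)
Your proposal is essentially the paper's own proof: apply the long exact sequence to the pair $\amalg_{i\in I}A_i\hookrightarrow G_I$ with coefficients $V_I$, identify the middle terms via the vertex-group equivalence $H_b^\ast(l_e;V_I)$ and the additivity isomorphism of Proposition~\ref{p:grpadditive}, use Lemma~\ref{l:htpinclusion} (vertex-independence of $H_b^\ast(l_i;V_I)$) to see that the restriction map becomes $(H_b^\ast(i_{A_i};V))_{i\in I}$, and define $d^\ast$ so that the last square commutes. The only slip is cosmetic: $d^\ast$ is $\delta^\ast$ precomposed with the inverse of the additivity identification of the third term, not with $H_b^\ast(l_e;V_I)^{-1}$, as is clear from your own setup.
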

\begin{proof}
Write $t_j\colon A_j\longhookrightarrow \amalg_{i\in I} A_i$ and $s\colon \amalg_{i\in I} A_i\longhookrightarrow G_I$ for the canonical inclusions. By Corollary~\ref{c:grphequiv} and Proposition~\ref{p:grpadditive}, the rows in the following diagram are isometric isomorphisms:\

\

\makebox[12cm]{%
 \begin{centering}
\begin{tikzpicture}[ampersand replacement=\&]
\matrix (m) [scale = 0.3,matrix of math nodes, row sep=3.5em,
column sep=1.5em, text height=1.5ex, text depth=0.25ex]
{
H_{b}^{\ast}(G;(A_i)_{i\in I};V) \&H_b^{\ast}(G_I; V_I)\& H_b^{\ast}(\amalg_{i\in I} A_i;s^{\ast} V)\&H_b^{\ast+1}(G,(A_i)_{i\in I};V)\\
H_{b}^{\ast}(G;(A_i)_{i\in I};V)\& H_b^{\ast}(G;V)\&\prod_{i\in I} H_b^{\ast}(A_i; i_{A_i}^{\ast} V)\&H_b^{\ast+1}(G,(A_i)_{i\in I};V)\\
};
\path[->]
(m-1-1) edge node[auto] {$ = $} (m-2-1)
(m-1-2) edge node[auto] {$ H_b^{\ast}(l_e;V_I)$} node[left] {$\cong$} (m-2-2)
(m-1-3) edge node[right = 3pt] {$ (H_b^{\ast}(t_i;s^{\ast} V))_{i\in I}$} (m-2-3)
(m-1-4) edge node[right] {$ =$} (m-2-4)
(m-1-1) edge node[above=8pt] {$H^{\ast}(\iota^{\ast}_{(G,(A_i)_{i\in I};V)})$} (m-1-2)
(m-1-2) edge node[above=8pt] {$ H_b^{\ast}(s;V_I)$} (m-1-3)
(m-1-3) edge node[above=8pt] {$ \delta^{\ast}$} (m-1-4)
(m-2-1) edge node[below=8pt] {$ \iota^{\ast}$} (m-2-2)
(m-2-2) edge node[below=8pt] {$(H_b^{\ast}(i_{A_i};V))_{i\in I}$} (m-2-3)
(m-2-3) edge node[below=8pt] {$ d^{\ast}$} (m-2-4)
;
\end{tikzpicture}
\end{centering}
}
\

The upper square commutes by definition and we can choose a continuous map~$d^{\ast}$ in such a way, that the lower square commutes. The centre square commutes by Lemma~\ref{l:htpinclusion}. 
\end{proof}
\section{Relative Homological Algebra}
\subsection{Relative Homological Algebra for Groupoids}
In this section, we develop the relative homological algebra necessary to study resolutions that can calculate bounded cohomology of grou\-poids, analogously to the group case~\cite{I,Mo01}. We introduce the notations of relatively injective groupoid modules and define strong resolutions for Banach groupoid modules. Then, we show that $B(C_\ast(\G),V)$ is a strong relatively injective resolution for each Banach~$\G$-module $V$. Next, we prove the fundamental lemma for relative homological algebra in our setting, implying in particular that strong relatively injective resolutions are unique up to bounded $\G$-cochain equivalence. Thus these resolutions can be used to calculate bounded cohomology up to isomorphism, and we show that the semi-norm on bounded cohomology can be seen to be the infimum over all semi-norms induced by strong relatively injective resolutions. 
\begin{def.}\label{d:relinj}
 Let $\G$ be a groupoid.
\begin{enumerate}
 \item Let $V$ and $W$ be Banach \G-modules. A $\G$-map $i\colon V\longrightarrow W$ is called \emph{relatively injective} if there exists a  (not necessarily $\G$-equivariant) $\R$-morphism $\sigma\colon W\longrightarrow V$ such that $\sigma\circ i= \id_V$ and~$\|\sigma\|_{\infty} \leq 1$. 
\item A $\G$-module $I$ is called \emph{relatively injective} if for each relatively injective \G-map $i\colon V\longrightarrow W$ between Banach $\G$-modules and each $\G$-map $\alpha\colon V\longrightarrow I$ there is a $\G$-map $\beta\colon W\longrightarrow I$, such that $ \beta\circ i = \alpha$ and~$\|\beta\|_{\infty}\leq \|\alpha\|_{\infty}.$
\end{enumerate}
\end{def.}

\begin{prop}\label{p:boundedrelinj}
 Let $U$ be a Banach $\G$-module and $n\in\N$. Then the Banach $\G$-module $B(C_n(\G),U)$ is relatively injective.
\end{prop}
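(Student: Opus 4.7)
The plan is to adapt Ivanov's classical argument for groups to the groupoid setting, exploiting the ``free'' nature of the $\G$-action on the basis of $C_n(\G)$. The key combinatorial observation is that for each $e \in \ob \G$, every simplex $(g_0, g_1, \ldots, g_n) \in P_n(\G)_e$ is in the $\G$-orbit of a canonical representative obtained by acting with $g_0^{-1}$: namely $(\id_{s(g_0)}, g_1, \ldots, g_n)$, which is the unique orbit element starting with an identity morphism. Thus the basis $\amalg_{e \in \ob \G} P_n(\G)_e$ splits canonically as $\amalg_{(\id_i, g_1, \ldots, g_n)} \G \cdot (\id_i, g_1, \ldots, g_n)$, and any $\G$-equivariant assignment on $C_n(\G)$ is determined by its values on these representatives.

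Concretely, suppose $i \colon V \longrightarrow W$ is a relatively injective $\G$-map with $\R$-splitting $\sigma \colon W \longrightarrow V$ satisfying $\sigma \circ i = \id_V$ and $\|\sigma\|_\infty \leq 1$, and let $\alpha \colon V \longrightarrow B(C_n(\G), U)$ be an arbitrary $\G$-map. I would define the extension $\beta \colon W \longrightarrow B(C_n(\G), U)$ objectwise by the formula
\[
 \bigl(\beta_e(w)\bigr)(g_0, g_1, \ldots, g_n) := g_0 \cdot \Bigl(\alpha_{s(g_0)}\bigl(\sigma_{s(g_0)}(g_0^{-1} \cdot w)\bigr)\Bigr)(\id_{s(g_0)}, g_1, \ldots, g_n)
\]
for $w \in W_e$ and $(g_0, \ldots, g_n) \in P_n(\G)_e$, extending $\R$-linearly to $C_n(\G)_e$. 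The definition is morally: move to the orbit representative via $g_0^{-1}$, split back to $V$ using $\sigma$, apply $\alpha$, evaluate on the representative, and push the resulting element of $U_{s(g_0)}$ back to $U_e$ via $g_0$.

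I then plan to verify the three required properties by direct calculation:
\begin{enumerate}
 \item The bound $\|\beta\|_\infty \leq \|\alpha\|_\infty$ is immediate since $\G$ acts isometrically on $U$, $\sigma$ is norm non-increasing, and the evaluation $(\id_{s(g_0)}, g_1, \ldots, g_n)$ lies in the unit ball of $C_n(\G)_{s(g_0)}$.
 \item The identity $\beta \circ i = \alpha$ follows by substituting $w = i_e(v)$: the $\G$-equivariance of $i$ yields $g_0^{-1} \cdot i_e(v) = i_{s(g_0)}(g_0^{-1} \cdot v)$, which $\sigma$ sends to $g_0^{-1} \cdot v$, and then $\G$-equivariance of $\alpha$ recovers $\alpha_e(v)(g_0, \ldots, g_n)$.
 \item For the $\G$-equivariance of $\beta$, I would check that for $h \in \G$ with $s(h) = e$, both $(h \cdot \beta_e(w))(g_0, \ldots, g_n)$ and $\beta_{t(h)}(h \cdot w)(g_0, \ldots, g_n)$ unwrap (using the definition of the $\G$-action on $B(C_n(\G), U)$, namely $(h \cdot f)(x) = h \cdot f(h^{-1} \cdot x)$) to the common expression $g_0 \cdot \alpha_{s(g_0)}(\sigma_{s(g_0)}(g_0^{-1} h \cdot w))(\id_{s(g_0)}, g_1, \ldots, g_n)$.
\end{enumerate}

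The proof is conceptually straightforward, so no step presents a genuine obstacle; the only real risk is bookkeeping errors in the equivariance check, since three different $\G$-actions (on $W$, on $C_n(\G)$, and on $B(C_n(\G), U)$) interact with the non-equivariant map $\sigma$ and the identification of orbit representatives. Writing out the sources and targets of each morphism carefully at every step should suffice to keep this under control.
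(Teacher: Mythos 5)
Your proof is correct and is essentially the argument the paper relies on (it cites Ivanov's group-case proof, and uses the identical construction in the topological analogue, Proposition~\ref{p:toprelinj}): your formula is just the standard one, $\beta_e(w)(g_0,\dots,g_n)=\alpha_e\bigl(g_0\cdot\sigma_{s(g_0)}(g_0^{-1}\cdot w)\bigr)(g_0,\dots,g_n)$, rewritten via the $\G$-equivariance of $\alpha$ and the fact that $g_0^{-1}\cdot(g_0,g_1,\dots,g_n)=(\id_{s(g_0)},g_1,\dots,g_n)$. The three verifications (norm bound, $\beta\circ i=\alpha$, equivariance) go through exactly as you outline.
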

\begin{proof}
 The proof is basically the same as in the case of $\G$ being a group~\cite[Proposition 3.4.3]{Bl14}.\qedhere
\end{proof}
\begin{def.}[Strong Resolutions] Let $(C^\ast,\delta^{\ast})$ be a Banach $\G$-cochain complex, $V$ a Banach $\G$-module and $\varepsilon\colon V\longrightarrow C_0$ a $\G$-augmentation map. We call $(C^\ast,\delta^\ast,\varepsilon)$ \emph{strong} or \emph{a strong resolution for $V$} if there exists a norm non-increasing cochain contraction, i.e., a family 
\begin{align*}
(s^n\colon C^n &\longrightarrow C^{n-1})_{n\in \N_{>0}}\\
s^{0} \colon C_0 &\longrightarrow V
\end{align*}
of (not necessarily $\G$-equivariant) $\R$-morphisms between $\G$-modules such that for all $n\in\N$ we have $\|s^n\|_{\infty} \leq 1$ and the family $(s^n)_{n\in\N}$ is a cochain contraction of the augmented cochain complex.
\end{def.}

\begin{exa}\label{e:cochaincontr}
Consider the Banach $\G$-cochain complex $B(C_{\ast}(\G),V)$ together with the augmentation map $\varepsilon_{\G} \colon B(C_0(\G),V) \longrightarrow V$  given by
\begin{align*}
 \varepsilon_{\G} \colon V&\longrightarrow  B(C_0(\G),V) \\
 v &\longmapsto (g\longmapsto v).
\end{align*}
Then, the family $s_{\ast}$ defined in Remark~\ref{r:chaincontraction} induces a norm-non increasing $\R$-cochain contraction~$s_b^{\ast}$ of $(B(C_{\ast}(\G),V),\varepsilon_{\G})$ given by
\begin{align*}
s_{b}^{\ast}\colon  B(C_{\ast}(\G),V) &\longrightarrow B(C_{\ast-1}(\G),V)\\
 \varphi &\longmapsto \varphi\circ s_{\ast}
\intertext{and setting for each $e\in \ob\G$}
s_{b,e}^{0}\colon B(C_{0}(\G)_e,V_e) &\longrightarrow V_e\\
\varphi &\longmapsto \varphi (\id_e).
\end{align*}
In particular,  $(B(C_{\ast}(\G),V),\varepsilon_{\G})$ is a strong resolution of $V$. 
\end{exa}

\begin{prop}[Fundamental lemma]\label{t:fundamental}
 Let $(I^n,\delta^n_I)_{n\in\N}$ be a relatively injective Banach \G-cochain complex and $\varepsilon \colon W\longrightarrow I^0$ a \G-augmentation map. Let $((C^n,\delta^n_C)_{n\in\N}, \nu \colon V\longrightarrow C^0)$ be a strong \G-resolution of a Banach~\G-module~$V$. Let $f\colon V\longrightarrow W$ be a $\G$-morphism. Then there exists an up to bounded $\G$-cochain homotopy unique extension of $f$ to a bounded~$\G$-cochain map between the resolution $(C^n,\delta_C^n,\nu)$ and the augmented cochain complex~ $(I^n,\delta^n_I,\varepsilon)$. 
\end{prop}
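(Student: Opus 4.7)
The strategy is the classical Ivanov-style inductive extension argument, adapted to the groupoid setting. The twist is that $\delta_C^{n-1}$ itself is not relatively injective as a $\G$-map---the contraction only gives $s^n \circ \delta_C^{n-1} = \id - \delta_C^{n-2} \circ s^{n-1}$ rather than an honest retraction---so one first passes to the quotient $C^{n-1}/\ker\delta_C^{n-1}$, where the induced coboundary does admit a norm one $\R$-splitting. Relative injectivity of the $I^n$ then does the rest at each step.

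\emph{Existence.} I would construct $(\alpha^n \colon C^n \to I^n)_{n \in \N}$ by induction. For $n = 0$, the augmentation $\nu \colon V \to C^0$ is relatively injective since $s^0$ is a norm non-increasing $\R$-retraction, so the relative injectivity of $I^0$ applied to the $\G$-map $\varepsilon \circ f \colon V \to I^0$ yields $\alpha^0 \colon C^0 \to I^0$ with $\alpha^0 \circ \nu = \varepsilon \circ f$. For the inductive step, set $\phi_n := \delta_I^{n-1} \circ \alpha^{n-1} \colon C^{n-1} \to I^n$ and $K^{n-1} := \ker \delta_C^{n-1}$, which is a closed sub-$\G$-module. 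For $c \in K^{n-1}$ with $n \geq 2$, the contraction identity gives $c = \delta_C^{n-2}(s^{n-1} c)$, so by the inductive hypothesis and $\delta_I^{n-1} \circ \delta_I^{n-2} = 0$ one has $\phi_n(c) = 0$; the analogous argument with $\nu s^0$ and $\delta_I^0 \circ \varepsilon = 0$ handles $n = 1$. Hence $\phi_n$ descends to a $\G$-map $\bar\phi_n \colon C^{n-1}/K^{n-1} \to I^n$. The induced coboundary $\bar\delta_C^{n-1} \colon C^{n-1}/K^{n-1} \to C^n$ is injective, and the composition of $s^n$ with the quotient projection provides a norm non-increasing $\R$-retraction (using that $\delta_C^{n-2} \circ s^{n-1}$ lands in $K^{n-1}$ because $\delta_C^{n-1} \circ \delta_C^{n-2} = 0$). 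Thus $\bar\delta_C^{n-1}$ is a relatively injective $\G$-map, and relative injectivity of $I^n$ produces the desired $\alpha^n \colon C^n \to I^n$ with $\alpha^n \circ \delta_C^{n-1} = \phi_n$.

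\emph{Uniqueness.} Given two such extensions $\alpha^\ast$ and $\beta^\ast$, I would build a bounded $\G$-cochain homotopy $(h^n \colon C^n \to I^{n-1})_{n \geq 0}$ inductively, with $h^0 := 0$. At stage $n$, set $g^n := \alpha^n - \beta^n - \delta_I^{n-1} \circ h^n$. The same contraction-plus-induction trick shows that $g^n$ vanishes on $\ker \delta_C^n$ (for $n = 0$ this uses only that $\alpha^0$ and $\beta^0$ agree on $\im \nu$), so $g^n$ descends to $C^n/\ker\delta_C^n$, along which $\bar\delta_C^n$ is a relatively injective $\G$-map as above. Applying relative injectivity of $I^n$ produces $h^{n+1} \colon C^{n+1} \to I^n$ with $h^{n+1} \circ \delta_C^n = g^n$, i.e.\ $\alpha^n - \beta^n = \delta_I^{n-1} \circ h^n + h^{n+1} \circ \delta_C^n$, closing the induction.

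The hard part is really the structural one already identified: turning the contraction data, which in the non-equivariant world splits $\delta_C^{n-1}$ only modulo lower-degree noise, into something that can be fed into the $\G$-equivariant lifting property defining relative injectivity. Quotienting by $K^{n-1}$ is exactly this bridge. The remaining verifications---that $K^{n-1}$ is a closed sub-$\G$-module, that the quotient is a Banach $\G$-module, and that the cochain identities propagate through the induction---are routine.
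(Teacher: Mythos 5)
Your proof is correct and is precisely the standard inductive lifting argument that the paper itself invokes (its proof is only a pointer to Proposition 3.4.7 of the author's thesis, described as ``the same lifting argument as for all homological fundamental lemmas''): in each degree you pass to the quotient by the kernel of the coboundary, where the strong contraction supplies the norm non-increasing splitting needed to make that map relatively injective, and then apply relative injectivity of $I^n$, both for existence and for constructing the homotopy. I see no gaps; the points you label routine (closedness of the kernel, the induced Banach $\G$-module structure on the quotient, and boundedness of the $\alpha^n$ and $h^n$, which follows from the norm control in the definition of relative injectivity) are indeed routine.
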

\begin{proof} The proof is basically the same lifting argument as for all homological ``fundamental lemmas'', for instance~\cite[Proposition 3.4.7]{Bl14}.
\end{proof}

\begin{cor}
 Let $\G$ be a groupoid and $V$ a $\G$-module. Then there exists an up to canonical bounded $\G$-cochain homotopy equivalence unique strong relatively injective \G-resolution of $V$.
\end{cor}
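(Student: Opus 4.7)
The corollary is a direct application of what has just been assembled. The plan is to separately verify existence and then uniqueness up to canonical bounded $\G$-cochain homotopy equivalence, using only Example~\ref{e:cochaincontr}, Proposition~\ref{p:boundedrelinj} and the Fundamental Lemma (Proposition~\ref{t:fundamental}).

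For existence, I would take the Banach $\G$-cochain complex $(B(C_\ast(\G),V),\delta^\ast)$ together with the $\G$-augmentation $\varepsilon_\G\colon V\longrightarrow B(C_0(\G),V)$ from Example~\ref{e:cochaincontr}. Example~\ref{e:cochaincontr} exhibits an explicit norm non-increasing $\R$-cochain contraction $s_b^\ast$, so this augmented complex is strong. Proposition~\ref{p:boundedrelinj} states that each $B(C_n(\G),V)$ is a relatively injective Banach $\G$-module. Hence $(B(C_\ast(\G),V),\delta^\ast,\varepsilon_\G)$ is a strong, relatively injective $\G$-resolution of $V$, witnessing existence.

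For uniqueness, let $(C^\ast,\delta_C^\ast,\nu)$ and $(D^\ast,\delta_D^\ast,\mu)$ be two strong, relatively injective $\G$-resolutions of $V$. Applying the Fundamental Lemma to $\id_V\colon V\longrightarrow V$ with $(C^\ast,\delta_C^\ast,\nu)$ as the strong resolution and $(D^\ast,\delta_D^\ast,\mu)$ (viewed as an augmented relatively injective complex) as the target yields a bounded $\G$-cochain map $f^\ast\colon C^\ast\longrightarrow D^\ast$ extending $\id_V$; symmetrically, reversing the roles produces a bounded $\G$-cochain map $g^\ast\colon D^\ast\longrightarrow C^\ast$ extending $\id_V$. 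Both of these are unique up to bounded $\G$-cochain homotopy by the uniqueness clause of Proposition~\ref{t:fundamental}, which is the key point: this is precisely what makes the resulting homotopy-equivalence class canonical.

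To finish, I would compose: both $g^\ast\circ f^\ast$ and $\id_{C^\ast}$ are bounded $\G$-cochain maps from the strong resolution $(C^\ast,\delta_C^\ast,\nu)$ to the relatively injective complex $(C^\ast,\delta_C^\ast,\nu)$ that extend $\id_V$. By the uniqueness part of Proposition~\ref{t:fundamental} they are bounded $\G$-cochain homotopic; the symmetric argument gives $f^\ast\circ g^\ast\simeq \id_{D^\ast}$. Therefore $f^\ast$ is a bounded $\G$-cochain homotopy equivalence, and the uniqueness of $f^\ast$ and $g^\ast$ up to homotopy makes this equivalence canonical. There is no real obstacle here beyond bookkeeping; the only mild subtlety is verifying that a Banach $\G$-cochain complex of relatively injective modules qualifies as a valid target for the Fundamental Lemma at every degree (which is automatic since the lemma is formulated degreewise via the relative injectivity condition of Definition~\ref{d:relinj}).
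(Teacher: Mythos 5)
Your proof is correct and follows exactly the route the paper intends: existence via the standard resolution $(B(C_\ast(\G),V),\varepsilon_\G)$ (strong by Example~\ref{e:cochaincontr}, relatively injective by Proposition~\ref{p:boundedrelinj}), and uniqueness by applying the Fundamental Lemma (Proposition~\ref{t:fundamental}) in both directions and using its uniqueness clause to show the composites are $\G$-cochain homotopic to the identities. Nothing is missing.
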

\begin{prop}\label{p:normnonincr}
Let $\G$ be a groupoid and let $V$ be a Banach $\G$-module.  Let~$((D^\ast,\delta^\ast_D), \varepsilon \colon V \longrightarrow D^0)$ be a strong \G-resolution of $V$.

Then for each strong cochain contraction of $(D^{\ast},\varepsilon)$ there exists a canonical  norm non-increasing cochain map of this resolution to the standard resolution~$(B(C_n(\G),V))_{n\in\N}$ of $V$ extending $\id_V$. 
\end{prop}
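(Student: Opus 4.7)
The approach is to construct the cochain map $\alpha^\ast\colon D^\ast\longrightarrow B(C_\ast(\G),V)$ directly from the given cochain contraction $s^\ast$ via an explicit iterative formula, rather than appealing to the fundamental lemma, since the latter only produces a bounded lift and not a norm non-increasing one. Once $s^\ast$ is fixed, the explicit formula has no free choices, and every step consists of composing norm non-increasing or isometric maps, so the total norm bound is immediate.

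Concretely, for $d\in D^n_e$ I would first define $\alpha^n_e(d)\in B(C_n(\G)_e, V_e)$ on a distinguished simplex $(\id_e,g_1,\dots,g_n)\in P_n(\G)_e$ by the nested expression
\begin{align*}
 \alpha^n_e(d)(\id_e,g_1,\dots,g_n) := s^0\bigl(g_1\cdot s^1\bigl(g_2\cdot s^2(\cdots s^{n-1}(g_n\cdot s^n((g_1g_2\cdots g_n)^{-1}\cdot d))\cdots)\bigr)\bigr),
\end{align*}
and then extend to an arbitrary simplex $(g_0,g_1,\dots,g_n)\in P_n(\G)_e$ by setting $\alpha^n_e(d)(g_0,\dots,g_n) := g_0\cdot \alpha^n_{s(g_0)}(g_0^{-1}\cdot d)(\id_{s(g_0)},g_1,\dots,g_n)$. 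A routine source/target check (using $t(g_0)=e$ and $s(g_i)=t(g_{i+1})$) shows that each expression indeed lands in $V_e$.

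Then I would verify the four required properties. Equivariance of each $\alpha^n$ is built into the definition by the extension formula, which is precisely the one dictated by the contragredient $\G$-action on $B(C_n(\G),V)$. The norm bound $\|\alpha^n\|_\infty\leq 1$ is immediate since each $s^i$ is norm non-increasing by assumption and each $\G$-action on $D^i$ and on $V$ is isometric, so evaluation at a single basis simplex produces an element of $V_e$ of norm at most $\|d\|_e$. The augmentation compatibility $\alpha^0\circ\varepsilon=\varepsilon_\G$ reduces, via $\G$-equivariance of $\varepsilon$ and the contraction relation $s^0\circ\varepsilon=\id_V$, to the constant-function formula $\varepsilon_\G$ of Example~\ref{e:cochaincontr}. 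Finally, the cochain-map identity $\delta_B^n\circ\alpha^n=\alpha^{n+1}\circ\delta_D^n$ follows from the standard telescoping computation: substituting the explicit form of $\partial_\ast$ from Definition~\ref{d:barres} and applying the contraction identity $\delta_D^{n-1}\circ s^n+s^{n+1}\circ\delta_D^n=\id_{D^n}$ pairs up consecutive summands and leaves exactly the terms that reproduce $\alpha^{n+1}\circ\delta_D^n$.

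The main obstacle is this telescoping verification, because the inhomogeneous face maps combine adjacent entries $g_ig_{i+1}$ into a single entry rather than omit one, so tracking cancellations is delicate. This can be eased by first establishing the analogous formula for the homogeneous Bar resolution $L_\ast(\G)$, whose face operators $(g_0,\dots,\widehat{g_i},\dots,g_n)$ interact transparently with the nested composition, and then transferring the result back via the $\G$-chain isometry of Proposition~\ref{p:homogeneous}, which is norm-preserving and commutes with boundaries. Canonicity of $\alpha^\ast$ given the contraction is manifest from the explicit formula.
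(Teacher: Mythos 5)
Your overall strategy is the same as the paper's: the paper also avoids the fundamental lemma and constructs the lift directly from the fixed contraction, defining $\alpha^n$ inductively by $\alpha^{n}_{t(g_0)}(\varphi)(g_0,\dots,g_n)=\alpha^{n-1}_{t(g_0)}\bigl(g_0\cdot s^{n}_{s(g_0)}(g_0^{-1}\cdot\varphi)\bigr)(g_0\cdot g_1,g_2,\dots,g_n)$, which unrolls to the closed expression $(g_0\cdots g_n)\cdot s^0\bigl(g_n^{-1}\cdot s^1(g_{n-1}^{-1}\cdot s^2(\cdots g_1^{-1}\cdot s^n(g_0^{-1}\cdot\varphi)\cdots))\bigr)$. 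Your equivariance, norm and augmentation checks are fine. The gap is the cochain-map property, which you flag as the main obstacle and claim follows by routine telescoping: with your nesting order it does not. Already the first square fails: using $s^1\circ\delta^0_D+\varepsilon\circ s^0=\id_{D^0}$, $s^0\circ\varepsilon=\id_V$, equivariance of $\delta^0_D$ and $\varepsilon$, and $\partial_1(g_0,g_1)=(g_0g_1)-(g_0)$, one computes
\begin{align*}
 \alpha^1(\delta^0_D d)(g_0,g_1)=g_0\cdot s^0(g_0^{-1}\cdot d)-g_0g_1\cdot s^0\bigl((g_0g_1)^{-1}\cdot d\bigr)
 =-\bigl(\alpha^0(d)\circ\partial_1\bigr)(g_0,g_1),
\end{align*}
so the degree-$0$ square anticommutes. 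The structural reason: transporting your formula through the isometry of Proposition~\ref{p:homogeneous}, it becomes $h_0\cdot s^0\bigl(h_0^{-1}h_1\cdot s^1(\cdots h_{n-1}^{-1}h_n\cdot s^n(h_n^{-1}\cdot d)\cdots)\bigr)$ on homogeneous tuples, i.e.\ the correct map precomposed with the reversal $r_n\colon(h_0,\dots,h_n)\mapsto(h_n,\dots,h_0)$. Since $\partial_n\circ r_n=(-1)^n\,r_{n-1}\circ\partial_n$, your family satisfies $\delta^n\circ\alpha^n=(-1)^{n+1}\,\alpha^{n+1}\circ\delta^n_D$ instead of commuting; note the degree-$1$ square \emph{does} commute, which can mask the error if one only spot-checks there. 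For the same reason the proposed detour through $L_\ast(\G)$ does not help: the defect is the ordering of the vertices, not the inhomogeneous face maps.

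The repair is small. Either reverse the nesting, i.e.\ apply $s^n$ to $g_0^{-1}\cdot d$ innermost and translate outward by the accumulated products $g_0g_1\cdots g_k$ (this is exactly the paper's inductive formula quoted above, and your evaluation on $(\id_e,g_1,\dots,g_n)$ plus equivariant extension then reproduces it), or keep your formula and insert the signs $(-1)^{n(n+1)/2}$, which restores the cochain identity and does not affect the norm bound. With either correction the remaining points of your argument (equivariance via the extension formula, $\|\alpha^n\|_\infty\leq 1$ from the isometry of the actions and $\|s^i\|_\infty\leq 1$, compatibility with the augmentations, and canonicity given the contraction) go through as you describe and match the paper's proof.
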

\begin{proof}
 The proof is similar to the group case~\cite[Lemma~3.2.2]{I}.

 Let $((s^n\colon D^n\longrightarrow D^{n-1})_{n\in\N}, s^0\colon D^0\longrightarrow V)$ be a norm non-increasing cochain contraction of the augmented cochain complex.  We will define families~$(\alpha_e^n\colon D_e^n \longrightarrow B(C_n(\G)_e,V_e))_{e\in\ob\G}$ by induction over $n\in\N\cup\{-1\}$. First, we set $\alpha^{-1} = \id_{V}$. Assume we have defined $\alpha_{n-1}$ for some $n\in\N$. Then we set for all $(g_0,\dots,g_n)\in P_n(\G)$ and all $\varphi \in D^n_{t(g_0)}$   
\begin{align*}
\alpha^n_{t(g_0)}(\varphi)(g_0,\dots,g_n) = \alpha^{n-1}_{t(g_0)}(g_0\cdot s^{n}_{s(g_0)}(g_0^{-1}\cdot \varphi))(g_0\cdot g_1,\dots,g_n).
\end{align*}
We immediately see that this is a $\G$-map for all $n\in\N$ and since $s^n$ is norm non-increasing and the $\G$-action is isometric, $\alpha^n$ is norm non-increasing by induction. By a short calculation we see that $(\alpha^n)_{n\in\N}$ is a cochain map. 
\end{proof}
\begin{cor}\label{c:strongrelin}
 Let $\G$ be a groupoid and $V$ a Banach $\G$-module. Moreover, let~$((D^n,\delta^n_D)_{n\in\N}, \varepsilon \colon V\longrightarrow D^0)$ be a strong relatively injective~\G-resolution of $V$. Then there exists a canonical semi-norm non-increasing isomorphism of graded $\R$-modules
\[
 H^\ast (D^{\ast\G}) \longrightarrow H^\ast_b(\G;V).
\]
\end{cor}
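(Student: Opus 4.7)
The plan is to combine Proposition~\ref{p:normnonincr} (which supplies a \emph{norm non-increasing} comparison map into the Bar resolution) with the fundamental lemma (Proposition~\ref{t:fundamental}) (which supplies the inverse up to $\G$-cochain homotopy), and then apply the invariants functor $(\,\cdot\,)^{\G}$.

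First, I would observe that by Proposition~\ref{p:boundedrelinj} and Example~\ref{e:cochaincontr}, $(B(C_{\ast}(\G),V),\varepsilon_\G)$ is itself a strong relatively injective $\G$-resolution of $V$. Fix any norm non-increasing contraction of $(D^{\ast},\varepsilon)$; then Proposition~\ref{p:normnonincr} provides a canonical norm non-increasing $\G$-cochain map
\[
 \alpha^{\ast}\colon D^{\ast} \longrightarrow B(C_{\ast}(\G),V)
\]
extending $\id_V$. Since $(D^\ast,\varepsilon)$ is itself strong and each $D^n$ is relatively injective, the fundamental lemma applied in the reverse direction produces a $\G$-cochain map
\[
 \beta^{\ast}\colon B(C_{\ast}(\G),V)\longrightarrow D^{\ast}
\]
extending $\id_V$. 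Both $\beta^\ast\circ\alpha^\ast$ and $\id_{D^\ast}$ are bounded $\G$-cochain extensions of $\id_V$ to the strong $\G$-resolution $D^{\ast}$ by the relatively injective complex $D^{\ast}$; by the uniqueness assertion of Proposition~\ref{t:fundamental} they are $\G$-cochain homotopic, and symmetrically $\alpha^\ast\circ\beta^\ast\simeq\id_{B(C_\ast(\G),V)}$.

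Applying the invariants functor $(\,\cdot\,)^{\G}\colon\normedgmod\longrightarrow\normedmod$ preserves $\R$-cochain maps, $\R$-cochain homotopies, and the property of being norm non-increasing. Hence $\alpha^{\ast\G}$ and $\beta^{\ast\G}$ are mutually homotopy inverse $\R$-cochain equivalences
\[
 \alpha^{\ast\G}\colon D^{\ast\G}\longrightarrow B_{\G}(C_{\ast}(\G),V) = C_b^{\ast}(\G;V),
\]
and $\alpha^{\ast\G}$ is norm non-increasing. Passing to cohomology yields the desired isomorphism $H^{\ast}(D^{\ast\G})\longrightarrow H^{\ast}_b(\G;V)$, and it is semi-norm non-increasing because it is induced by a norm non-increasing cochain map. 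Independence of the induced map in cohomology from the choice of contraction (hence canonicity) follows again from the uniqueness clause of Proposition~\ref{t:fundamental}: any two such extensions of $\id_V$ differ by a bounded $\G$-cochain homotopy, which becomes an $\R$-cochain homotopy after applying $(\,\cdot\,)^{\G}$.

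The only real point that requires care is the norm bound. The fundamental lemma alone only supplies \emph{some} bounded extension, with no uniform control on the operator norm; so I would specifically use Proposition~\ref{p:normnonincr} to build the map in the direction $D^\ast\to B(C_\ast(\G),V)$ whose induced cohomology map I want to estimate, and rely on the fundamental lemma only for the inverse and for uniqueness up to homotopy. This separation of roles is what makes the semi-norm non-increasing conclusion work out cleanly.
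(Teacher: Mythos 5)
Your proposal is correct and follows essentially the same route as the paper: the paper also builds the map via Proposition~\ref{p:normnonincr} to get the norm non-increasing direction and then invokes the fundamental lemma (Proposition~\ref{t:fundamental}) to conclude it is an isomorphism, leaving implicit the mutually-inverse-up-to-homotopy argument that you spell out. Your explicit treatment of the reverse map, the homotopies, and the independence of the choice of contraction is exactly the content the paper compresses into its one-line appeal to the fundamental lemma.
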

\begin{proof}
 By Proposition~\ref{p:normnonincr} there exists a norm non-increasing $\G$-cochain map~$D^n \longrightarrow B(C_n(\G),V)$
extending $\id_{V}$. This map induces a norm non-increasing morphism $H^\ast(D^{\ast\G})\longrightarrow H_b^\ast(\G;V).$
By the fundamental lemma for groupoids, this is an isomorphism.
\end{proof}

\subsection{Relative Homological Algebra for Pairs of Groupoids}\label{ss:relhomalgpairs}
In this section, we will discuss a version of relative homological algebra that can be used to describe the bounded cohomology of a pair of groupoids. The definitions and results will be analogous to the ones in the absolute setting. If~$(\G,\A)$ is a pair of groupoids, we will define $(\G,\A)$-cochain complexes and strong, relatively injective $(\G,\A)$-resolutions in this setting. We will see that there is a fundamental lemma for pairs and that the pair of standard resolutions is a strong, relatively injective $(\G,\A)$-resolution, thus $H_b^\ast(\G,\A;V)$ can be calculated by such resolutions. Our definition is slightly more restrictive than Park's definition of allowable pairs~\cite{Pa03}, but will directly lead to the fundamental lemma for pairs. 
\begin{def.}
 Let $i\colon \A\longrightarrow \G$ be a pair of groupoids, i.e., $\A$ and $\G$ are groupoids and $i$ is an injective groupoid map, see Definition~\ref{d:groupoids}. We define a category $\GABan$ by setting:

\begin{enumerate}
 \item Objects in $\GABan$ are tripels $(V,V',\varphi)$, where $V$ is a Banach $\G$-module,~$V'$ a Banach $\A$-module and $\varphi \colon i^\ast V\longrightarrow V'$ an $\A$-morphism. We call such an object a \emph{$(\G,\A)$-module}.
\item A morphism $(j,j') \colon (V,V',\varphi)\longrightarrow (W,W',\psi)$ in $\GABan$ is a pair~$(j,j')$ where  $j\colon V\longrightarrow W$ is a $\G$-map and \text{$j'\colon V'\longrightarrow W'$} is an $\A$-map, such that the following diagram commutes:
\begin{center}
\begin{tikzpicture}
\matrix (m) [scale = 0.5,matrix of math nodes, row sep=2.5em,
column sep=4em, text height=2.5ex, text depth=0.25ex]
{i^{\ast} V^{\ast} &i^\ast W\\
V'& W' 
\\ };
\path[->]
(m-1-1) edge node[auto] {$ i^\ast j$}(m-1-2)
(m-1-1) edge node[auto] {$\varphi$}  (m-2-1)
(m-2-1) edge node[auto] {$j'$}(m-2-2)
(m-1-2) edge node[auto] {$\psi$}(m-2-2)
;
\end{tikzpicture}
\end{center}
Composition is then defined componentwise. We call such a morphism also a \emph{$(\G,\A)$-map}. In addition, we will consider not necessarily $(\G,\A)$-equivariant morphisms  $(j,j') \colon (V,V',\varphi)\longrightarrow (W,W',\psi)$ by dropping the condition that $j$ and $j'$ are equivariant, but still demanding that the above diagram commutes.
\item Similarly, we also define a category $\GACh$ of Banach $(\G,\A)$-cochain complexes. The notions of augmentations, cochain homotopies etc. translate naturally into this setting. 
\end{enumerate}
\end{def.}
Now we define cohomoloyg of $(\G,\A)$-cochain complexes:
\begin{def.} Let $i\colon \A\longrightarrow \G$ be a pair of groupoids.
\begin{enumerate}
 \item Let $(C^\ast,D^\ast,f^\ast)$ be a $(\G,\A)$-cochain complex. The map $f^\ast$ restricts to a cochain map 
\begin{align*}
 \overline{f^\ast}\colon C^{\ast\G} &\longrightarrow D^{\ast\A}\\
(v_e)_{e\in\ob \G} &\longmapsto (f^\ast_a(v_{i(a)}))_{a\in\ob \A}.
\end{align*}
We write $K^\ast(C^\ast,D^\ast,f^\ast)$ for the normed $\R$-cochain complex $\ker(\overline {f^\ast})$, given by considering the kernel in each degree and endowed with the norm induced by the norm on $C^\ast$. 
\item Let $(j,j')\colon (C_0^\ast,D_0^\ast,f_0^\ast)\longrightarrow (C_1^\ast,D_1^\ast,f_1^\ast)$ be a $(\G,\A)$-cochain map. Then, by restriction, $(j,j')$ induce the maps on the right-hand side of the following diagram
\begin{center}
\begin{tikzpicture}
\matrix (m) [scale = 0.5,matrix of math nodes, row sep=2.5em,
column sep=4em, text height=2.5ex, text depth=0.25ex]
{\ker \overline {f_0^\ast}&C_0^{\ast\G}&D_0^{\ast\A}\\
\ker \overline {f_1^\ast}&C_1^{\ast\G}&D_1^{\ast\A},
\\ };
\path[->]
(m-1-1) edge node[auto] {$ $}(m-1-2)
(m-1-1) edge node[auto] {$\overline {j^\ast}|_{\ker \overline {f_0^\ast}}$}  (m-2-1)
(m-2-1) edge node[auto] {$ $}(m-2-2)
(m-1-2) edge node[auto] {$ \overline {j^\ast}$}(m-2-2)
(m-1-2) edge node[auto] {$\overline {f_0^\ast}$} (m-1-3)
(m-2-2) edge node[auto] {$\overline {f_1^\ast}$} (m-2-3)
(m-1-3) edge node[auto] {$\overline {j'^\ast}$} (m-2-3)
;
\end{tikzpicture}
\end{center}
and this square commutes, hence the map on left-hand side is defined.
We write $K^\ast(j,j'):= \overline {j^\ast}|_{\ker \overline {f_0^\ast}}$ for the map on the left-hand side. In this way,~$K^\ast$ defines a functor $\GACh\longrightarrow \RCh^{\|\cdot\|}$.
\item We write $H^\ast(C^\ast,D^\ast,f^\ast)$ to denote the cohomology of $K^\ast(C^\ast,D^\ast,f^\ast)$ endowed with the induced semi-norm. In this way, we have defined a functor~$\GACh\longrightarrow \Modgrn$. 
\end{enumerate}
\end{def.}

 The main example of $(\G,\A)$-cochain complexes in this section is given by the pair of canonical resolutions for $\G$ and $\A$:

\begin{exa}
 Let $i\colon \A\longrightarrow \G$ be a groupoid pair and $V$ a Banach $\G$-module. Then $C^{\ast}(\G,\A;V):= (B(C_{\ast}(\G),V),B(C_{\ast}(\A),i^{\ast} V), B(C_{\ast}(i),V))$ is a Banach~$(\G,\A)$-cochain complex. By definition 
\begin{equation*}
  H^{\ast}(C^\ast(\G,\A;V)) = H_b^{\ast}(\G,\A;V).
\end{equation*}
\end{exa}
\begin{rem}
 Let $(f_0^\ast, f_1^\ast),(g_0^\ast,g_1^\ast)\colon (C_0^\ast, C_1^\ast, \varphi^\ast) \longrightarrow (D_0^\ast, D_1^\ast, \psi^\ast)$ be a pair of $(\G,\A)$-cochain maps. Let $(h_0^\ast, h_1^\ast)\colon (C_0^\ast, C_1^\ast, \varphi_0^\ast) \longrightarrow (D_0^{\ast-1}, D_1^{\ast-1}, \psi^{\ast-1})$ be a $(\G,\A)$-cochain homotopy between $(f_0^\ast, f_1^\ast)$ and $(g_0^\ast,g_1^\ast)$, i.e., $h_0^\ast$ is a $\G$-cochain homotopy between $f_0^\ast$ and $g_0^\ast$ and $h_1^\ast$ is an $\A$-cochain homotopy between $f_1^\ast$ and $g_1^\ast$, and the pair $(h^\ast_0,h_1^\ast)$ is a family of $(\G,\A)$-maps. Then the pair $(h^\ast_0,h^\ast_1)$ induces an~$\R$-cochain homotopy between $K^\ast(f_0^\ast,f_1^\ast)$ and~$K^\ast(g_0^\ast,g^\ast_1)$. In this sense,  cohomology of $(\G,\A)$-cochain complexes is a homotopy invariant.  
\end{rem}

\begin{def.}[Relatively injective pairs]Let $i\colon \A\longrightarrow \G$ be a pair of groupoids. 
 \begin{enumerate}
  \item A $(\G,\A)$-map $(j,j')\colon (V,V',\varphi)\longrightarrow (W,W',\psi)$ is called \emph{relatively injective}, if there is a (not necessarily $(\G,\A)$-equivariant) \emph{split}
 \[(\sigma,\sigma')\colon (W,W',\psi)\longrightarrow (V,V',\varphi),\]
 such that $(\sigma,\sigma')\circ (j,j')= (\id_V,\id_{V'})$ and $\|\sigma\|_{\infty}\leq 1$ and $\|\sigma'\|_{\infty}\leq 1$. 
\item A $(\G,\A)$-module $(I,I',f)$ is called \emph{relatively injective} if for each relatively injective $(\G,\A)$-map $(j,j')\colon (V,V',\varphi)\longrightarrow (W,W',\psi)$  between $(\G,\A)$-mo\-dules and each $(\G,\A)$-map $(\alpha,\alpha')\colon (V,V',\varphi)\longrightarrow (I,I',f)$, there exists a~$(\G,\A)$-map $(\beta,\beta')\colon (W,W',\psi)\longrightarrow (I,I',f)$, such that~$(\beta,\beta')\circ (j,j') = (\alpha,\alpha')$ and $\|\beta\|_{\infty}\leq \|\alpha\|_{\infty}$ and $\|\beta'\|_{\infty}\leq \|\alpha'\|_{\infty}$. See Figure~\ref{f:relinj}.
 \end{enumerate}
\end{def.}
\begin{figure}
\begin{center}
\begin{tikzpicture}
\matrix (m) [scale = 0.5,matrix of math nodes, row sep=2.5em,
column sep=3em, text height=2.5ex, text depth=0.25ex]
{
i^\ast V & & V'&\\
&i^\ast I&&I'\\
i^\ast W& & W'\\
};
\path[->]
(m-3-1)  edge[bend left=45pt] node[left=2pt, above=20pt] {$ i^\ast\sigma$}(m-1-1)
(m-1-1)  edge node[below=10pt, left] {$ i^\ast j$}(m-3-1)
(m-1-1) edge node[auto] {$i^\ast \alpha $}(m-2-2)
;
\path[color=gray!80!black,->]
(m-1-1)  edge node[auto] {$\varphi $}(m-1-3)
(m-3-1) edge node[auto] {$\psi $}(m-3-3)
(m-2-2) edge node[left = 14pt, above] {$ f$}(m-2-4)
(m-1-3) edge[-,line width=4pt,draw=white] node[below=10pt, right] {$ $}(m-3-3)
(m-1-3) edge node[below=10pt, right] {$ j'$}(m-3-3)
(m-3-3)  edge[bend left=45pt,line width=4pt,draw=white] node[left=1pt, above=20pt] {$ $}(m-1-3)
(m-3-3)  edge[bend left=45pt] node[left=1pt, above=20pt] {$\sigma'$}(m-1-3)
(m-1-3) edge node[auto] {$ \alpha'$}(m-2-4)
;
\path[dashed,->]
(m-3-1) edge node[auto] {$ i^\ast\beta$}(m-2-2);
\path[color=gray!80!black,dashed,->]
(m-3-3) edge node[below=3pt,right] {$ \beta'$}(m-2-4)
;

\end{tikzpicture}\caption{Extension Problem for Pairs.}\label{f:relinj}
\end{center}
\end{figure}
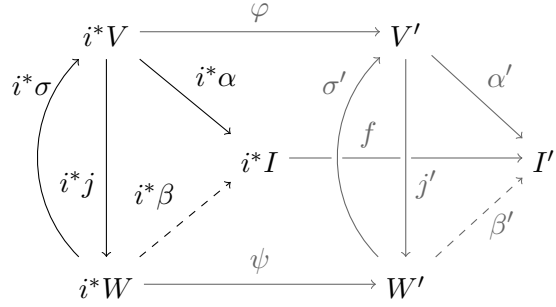

\begin{rem}
Park~\cite{Pa03} treats bounded cohomology of a pair~$(G,A)$ of groups via so called \emph{allowable resolutions}, i.e., via pairs consisting of a strong, relatively injective $G$-resolution and a strong, relatively injective $A$-resolution, together with a cochain map that commutes with a pair of norm non-increasing cochain contractions of the two resolutions. Thus, the condition on relative injectivity of an allowable pair is a priori weaker than our definition of relatively injective~$(G,A)$-resolutions. However, as noted by Frigerio and Pagliantini~\cite[Remark 3.8]{FP}, it is not clear why Park's definition should lead to a version of the fundamental lemma. Our definition avoids this problem and still includes the interesting examples. 
\end{rem}

\begin{exa}
Let $i\colon \A\longrightarrow \G$ be a pair of groupoids and $U$ a Banach $\G$-module. For each $n\in \N$, the Banach $(\G,\A)$-module $C^n(\G,\A;U)$ is relatively injective. 
\end{exa}
\begin{proof}
By a straightforward calculation~\cite[Example 3.5.21]{Bl14} one shows that the concrete maps constructed in the proof of Proposition~\ref{p:boundedrelinj} can be used to solve also the relative extension problem. 
\end{proof}

\begin{def.}
 Let $i\colon \A\longrightarrow \G$ be a pair of groupoids, $(C^\ast, D^\ast,f^\ast)$ be a Banach $(\G,\A)$-cochain complex and $V$ be a Banach $\G$-module. Furthermore,  let~$(\varepsilon,\varepsilon')\colon (V,i^\ast V,\id_{i^\ast V})\longrightarrow (C^0,D^0,f^0)$ be a $(\G,\A)$-augmentation map. We call $(C^\ast,D^\ast,f^\ast, (\varepsilon,\varepsilon'))$ a \emph{strong resolution of} $V$ if there exists a (not necessarily $(\G,\A)$-equivariant) norm non-increasing cochain contraction $(s^\ast,t^\ast)$ of $(C^\ast,D^\ast,f^\ast, (\varepsilon,\varepsilon'))$, i.e., a norm non-increasing contraction $s^\ast$ of $(C^\ast,\varepsilon)$ and a norm non-increasing contraction $t^\ast$ of~$(D^{\ast},\varepsilon')$, such that $f^\ast$ commutes with $i^\ast s^\ast$ and $t^\ast$. 
\end{def.}

\begin{exa}
 Let $i\colon \A\longrightarrow \G$ be a pair of groupoids and $V$ be a Banach $\G$-module. Let $s_\G^\ast$ and $s_\A^\ast$ be the norm-non-increasing cochain contractions for $(B(C_{\ast}(\G),V),\varepsilon_{\G})$ and $(B(C_{\ast}(\A),i^{\ast} V),\varepsilon_{\A})$ as in Example~\ref{e:cochaincontr}. By a short calculation, we see that $(s_{\G}^\ast,s_{\A}^\ast)$ is a norm non-increasing cochain contraction of~$(C^{\ast}(\G,\A;V),(\varepsilon_{\G},\varepsilon_{\A}))$. Hence, $(C^{\ast}(\G,\A;V),(\varepsilon_{\G},\varepsilon_{\A}))$ is a strong $(\G,\A)$-resolution of $V$, called \emph{the standard $(\G,\A)$-resolution of $V$}.
\end{exa}

\begin{prop}[Fundamental Lemma for Pairs]\label{p:fundamentalpairs} Let $i\colon \A\longrightarrow \G$ be a pair of groupoids.
 Let $(I^\ast,J^\ast,\varphi^\ast)$ be a relatively injective $(\G,\A)$-cochain complex and $(\varepsilon,\varepsilon')\colon (W,i^\ast W,\id_{i^\ast W})\longrightarrow (I^0,J^0,\varphi^0)$  a $(\G,\A)$-augmentation map. Let~$(C^\ast, D^\ast,\psi^\ast,(\nu,\nu'))$ be a strong $(\G,\A)$-resolution of a Banach $\G$-module $V$. Let $(f,f')\colon (V,i^{\ast}V,\id_{i^\ast V})\longrightarrow (W,i^\ast W,\id_{i^\ast W})$ be a $(\G,\A)$-map. Then there exists an extension of~$(f,f')$ to a bounded $(\G,\A)$-cochain map from the resolution~$(C^\ast, D^\ast,\psi^\ast,(\nu,\nu'))$ to the augmented cochain complex~$(I^\ast,J^\ast,\varphi^\ast, (\varepsilon,\varepsilon'))$. This extension is unique up to bounded $(\G,\A)$-cochain homotopy.
\end{prop}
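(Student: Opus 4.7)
The plan is to adapt the proof of the absolute Fundamental Lemma (Proposition~\ref{t:fundamental}) by extending $(f,f')$ degree by degree and constructing both components simultaneously, rather than independently, so that the compatibility with $\varphi^\ast$ and $\psi^\ast$ can be enforced at every step by a single invocation of the relative injectivity of the pair $(I^n,J^n,\varphi^n)$.

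For existence, I would argue by induction on degree, starting from $(\alpha^{-1},\alpha'^{-1}) := (f,f')$. Given a $(\G,\A)$-cochain map $(\alpha^k,\alpha'^k)_{k\leq n-1}$ extending $(f,f')$ through degree $n-1$, I would use the cochain contractions $(s^\ast, t^\ast)$ of the strong $(\G,\A)$-resolution to set up the same auxiliary extension problem as in the absolute case, but now formulated in $\GABan$. Concretely, the contractions give non-equivariant retracts that exhibit the relevant coboundary data at degree $n$ as a relatively injective $(\G,\A)$-inclusion, while the inductive hypothesis provides the required $(\G,\A)$-equivariant map into $(I^n,J^n,\varphi^n)$ on the ``lower'' side of that problem. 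Applying the defining property of the relatively injective pair $(I^n,J^n,\varphi^n)$ then yields $(\alpha^n,\alpha'^n)$ simultaneously satisfying $\alpha^n\circ\delta^{n-1}_C = \delta^{n-1}_I\circ\alpha^{n-1}$, $\alpha'^n\circ\delta^{n-1}_D = \delta^{n-1}_J\circ\alpha'^{n-1}$, and $\varphi^n\circ i^\ast\alpha^n = \alpha'^n\circ \psi^n$ in one step.

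For uniqueness up to $(\G,\A)$-cochain homotopy, I would run essentially the same inductive scheme applied to the difference of two extensions: this difference is a pair of cochain maps vanishing on the augmentations, and the contractions together with relative injectivity of $(I^{n-1},J^{n-1},\varphi^{n-1})$ give a $(\G,\A)$-cochain homotopy $(h^\ast,h'^\ast)$, where building both components simultaneously via the pair injectivity guarantees the compatibility $\varphi^{n-1}\circ i^\ast h^n = h'^n \circ \psi^n$ automatically.

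The main obstacle, and the reason the absolute argument does not immediately pair up, is the verification that the non-equivariant split used to formulate the extension problem at each degree really is a $(\G,\A)$-map, i.e.~that it commutes with $\psi^\ast$ at every stage. This is precisely what the strong $(\G,\A)$-resolution hypothesis encodes by requiring $\psi^\ast$ to commute with $(i^\ast s^\ast, t^\ast)$; without this compatibility the two extension problems on $C^\ast$ and $D^\ast$ could only be solved independently, with no reason for the resulting lifts to respect $\varphi^\ast$. Once this one compatibility is checked, the remaining homological bookkeeping is formal and parallels the absolute case.
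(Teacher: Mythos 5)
Your proposal is correct and follows essentially the same route as the paper, which proves the result by running the standard inductive lifting argument of the absolute Fundamental Lemma (Proposition~\ref{t:fundamental}) verbatim, except that each extension problem is solved in the category of $(\G,\A)$-modules rather than for single modules. You also correctly isolate the one point that makes the pairing-up work, namely that the compatibility of $\psi^\ast$ with the pair of contractions $(i^\ast s^\ast, t^\ast)$ in the definition of a strong $(\G,\A)$-resolution is exactly what turns the non-equivariant splits into $(\G,\A)$-morphisms, so that relative injectivity of the pair $(I^n,J^n,\varphi^n)$ can be invoked.
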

\begin{proof}
 The proof is the same as the one of Proposition~\ref{t:fundamental}, just solving the extension problems for pairs instead of for single modules.
\end{proof}

\begin{prop}\label{p:normnonincrpair}
 Let $i\colon \A\longrightarrow \G$ be a groupoid pair and $V$ a Banach $\G$-module. Let $(C^\ast,D^\ast,\varphi^\ast,(\nu,\nu'))$ be a strong $(\G,\A)$-resolution of $V$. Then for each norm non-increasing cochain contraction of $(C^\ast,D^\ast,\varphi^\ast,(\nu,\nu'))$  there exists a canonical, norm non-increasing $(\G,\A)$-cochain map 
\[
 (C^\ast,D^\ast,\varphi^\ast)\longrightarrow C^\ast(\G,\A;V),
\]
extending $(\id_{V},\id_{i^\ast V})$. 
\end{prop}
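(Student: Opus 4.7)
The strategy is to construct the two components of the $(\G,\A)$-cochain map separately by applying Proposition~\ref{p:normnonincr} to each coordinate, and then verify that the resulting pair actually commutes with the structure maps $\varphi^\ast$ and $B(C_\ast(i),V)$, so that it assembles into a $(\G,\A)$-cochain map.

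Concretely, write the given norm non-increasing cochain contraction of the pair-resolution as $(s^\ast, t^\ast)$. Applying Proposition~\ref{p:normnonincr} to the strong $\G$-resolution $(C^\ast,\nu)$ with contraction $s^\ast$ yields a canonical norm non-increasing $\G$-cochain map
\[
 \alpha^\ast \colon C^\ast \longrightarrow B(C_\ast(\G),V)
\]
extending $\id_V$. Similarly, $(D^\ast,\nu')$ is a strong $\A$-resolution of the Banach $\A$-module $i^\ast V$ with contraction $t^\ast$, so the same proposition produces a canonical norm non-increasing $\A$-cochain map
\[
 \beta^\ast \colon D^\ast \longrightarrow B(C_\ast(\A),i^\ast V)
\]
extending $\id_{i^\ast V}$. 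Both maps are norm non-increasing and extend the identity, so it remains to show that the pair $(\alpha^\ast,\beta^\ast)$ satisfies the defining compatibility of a $(\G,\A)$-cochain map, namely
\[
 B(C_n(i),V)\circ i^\ast\alpha^n \;=\; \beta^n\circ \varphi^n \qquad (n\in\N).
\]

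I would establish this by induction on $n$. The case $n=-1$ is immediate, since $\alpha^{-1}$, $\beta^{-1}$ and $\varphi^{-1}$ are each the identity (on $V$, on $i^\ast V$, and by definition of the augmentation). For the inductive step, I would evaluate both sides on some $\psi\in C^n_{i(t(a_0))}$ against a path $(a_0,\dots,a_n)\in P_n(\A)_{t(a_0)}$ using the explicit recursion from the proof of Proposition~\ref{p:normnonincr},
\[
 \alpha^n_{t(g_0)}(\varphi)(g_0,\dots,g_n) = \alpha^{n-1}_{t(g_0)}\!\bigl(g_0\cdot s^n_{s(g_0)}(g_0^{-1}\cdot \varphi)\bigr)(g_0 g_1,\dots,g_n),
\]
and the analogous formula for $\beta^n$ in terms of $t^n$. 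Expanding $\beta^n\circ\varphi^n$ on $\psi$, one then invokes in order: (i) the $\A$-equivariance of $\varphi^n$, which allows the action of $a_0^{-1}$ to be pulled through $\varphi^n$ and replaced by the $\G$-action of $i(a_0)^{-1}$ on $C^n$; (ii) the intertwining relation $\varphi^{n-1}\circ i^\ast s^n = t^n\circ \varphi^n$ built into the definition of a strong $(\G,\A)$-resolution, which lets $t^n$ be replaced by $s^n$ at the price of moving $\varphi$ one level down; (iii) the $\A$-equivariance of $\varphi^{n-1}$ to push $a_0$ back through; and (iv) the inductive hypothesis applied at level $n-1$ to the element $i(a_0)\cdot s^n_{i(s(a_0))}(i(a_0)^{-1}\cdot \psi)\in C^{n-1}_{i(t(a_0))}$. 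The resulting expression is precisely the evaluation of $B(C_n(i),V)\circ i^\ast\alpha^n$ on $\psi$ at $(a_0,\dots,a_n)$, closing the induction.

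The main obstacle is the bookkeeping in the inductive step: one has to carry the $\G$- and $\A$-actions through $\varphi^\ast$ correctly and recognise the resulting auxiliary element as lying in the image of $\varphi^{n-1}$ so that the inductive hypothesis can be applied. The compatibility of $\varphi^\ast$ with the pair of contractions $(s^\ast,t^\ast)$ enters in exactly one place, step (ii), but is indispensable; this is the reason our definition of a strong $(\G,\A)$-resolution demands this compatibility, as opposed to Park's a priori weaker notion. Canonicity of $(\alpha^\ast,\beta^\ast)$ given the contraction, as well as extension of $(\id_V,\id_{i^\ast V})$, are inherited directly from the absolute statement Proposition~\ref{p:normnonincr}.
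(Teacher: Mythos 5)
Your proposal is correct and follows exactly the route the paper intends: the paper's proof simply says to run the construction of Proposition~\ref{p:normnonincr} on each component and ``check that the constructed pair of maps is a $(\G,\A)$-map,'' which is precisely your inductive compatibility verification using the $\A$-equivariance of $\varphi^\ast$ and the relation $\varphi^{\ast-1}\circ i^\ast s^\ast = t^\ast\circ\varphi^\ast$ from the definition of a strong $(\G,\A)$-resolution. You have merely supplied the details the paper leaves implicit, and your remark on where the compatibility of the contractions is indispensable is accurate.
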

\begin{proof}
 The proof is basically the same as the one for Proposition~\ref{p:normnonincr}, one just has to check that the constructed pair of maps is a $(\G,\A)$-map.
\end{proof}

\begin{cor}\label{c:relviastronginj}
 Let $i\colon \A\longrightarrow \G$ be a groupoid pair and $V$ a Banach $\G$-module. Let $(C^\ast,D^\ast,\varphi^\ast,(\nu,\nu'))$ be a strong, relatively injective $(\G,\A)$-resolu\-tion~of $V$. Then there exists a canonical, semi-norm non-increasing isomorphism of graded $\R$-modules
\[
 H^\ast(C^\ast,D^\ast,\varphi^\ast) \longrightarrow H^\ast_b(\G,\A;V).
\]
\end{cor}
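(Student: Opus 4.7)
The plan is to mirror the absolute case (Corollary~\ref{c:strongrelin}) by first constructing the map explicitly via the standard resolution, and then checking it is an isomorphism using the fundamental lemma for pairs (Proposition~\ref{p:fundamentalpairs}).

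First, recall that the standard $(\G,\A)$-resolution $C^\ast(\G,\A;V)$ satisfies, by unwrapping the definitions, $K^\ast(C^\ast(\G,\A;V)) = C^\ast_b(\G,\A;V)$, so that $H^\ast(C^\ast(\G,\A;V)) = H^\ast_b(\G,\A;V)$. Fix any norm non-increasing cochain contraction of the given strong resolution $(C^\ast,D^\ast,\varphi^\ast,(\nu,\nu'))$. Proposition~\ref{p:normnonincrpair} then supplies a canonical norm non-increasing $(\G,\A)$-cochain map
\[
 \Phi \colon (C^\ast,D^\ast,\varphi^\ast) \longrightarrow C^\ast(\G,\A;V)
\]
extending $(\id_V,\id_{i^\ast V})$. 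Applying the functor $K^\ast \colon \GACh \longrightarrow \RCh^{\|\cdot\|}$ to $\Phi$ gives a norm non-increasing $\R$-cochain map $K^\ast(\Phi) \colon K^\ast(C^\ast,D^\ast,\varphi^\ast) \longrightarrow C_b^\ast(\G,\A;V)$, which in turn induces a semi-norm non-increasing map $H^\ast(\Phi) \colon H^\ast(C^\ast,D^\ast,\varphi^\ast) \longrightarrow H^\ast_b(\G,\A;V)$.

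To see that $H^\ast(\Phi)$ is an isomorphism, I apply the fundamental lemma for pairs in both directions. The standard resolution is strong and, by the preceding example, relatively injective; the given resolution is strong and relatively injective by hypothesis. So Proposition~\ref{p:fundamentalpairs}, applied to the identity $(V,i^\ast V, \id) \longrightarrow (V,i^\ast V,\id)$ with the strong source and the relatively injective target, gives both $\Phi$ and a $(\G,\A)$-cochain map $\Psi$ in the opposite direction; the same lemma applied with source and target swapped then forces $\Psi \circ \Phi$ and $\Phi \circ \Psi$ to be $(\G,\A)$-cochain homotopic to the identities (by the uniqueness clause). By the homotopy remark for $K^\ast$, the maps $K^\ast(\Phi)$ and $K^\ast(\Psi)$ are mutually inverse up to $\R$-cochain homotopy, so $H^\ast(\Phi)$ is an isomorphism of graded $\R$-modules. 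Canonicity follows from the construction: once one reads off the particular cochain map from Proposition~\ref{p:normnonincrpair}, the induced map on cohomology no longer depends on the chosen contraction, exactly as in the absolute case.

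I expect the only genuine subtlety to be verifying that the map produced by Proposition~\ref{p:normnonincrpair} really lands in the subcomplex $K^\ast$, i.e.~that $\Phi$ is a genuine $(\G,\A)$-cochain map and hence respects the kernel of $\overline{f^\ast}$; this is precisely what Proposition~\ref{p:normnonincrpair} guarantees by building the two components compatibly with the contractions, so nothing more than tracking commutativity of the defining square is required. Everything else is bookkeeping: norm non-increasingness of $\Phi$ passes to $K^\ast(\Phi)$ because $K^\ast$ is defined by restriction with the induced norm, and the semi-norm on the cohomology side is the infimum norm of representing cocycles, which is therefore non-increased by $H^\ast(\Phi)$.
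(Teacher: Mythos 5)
Your proposal is correct and follows essentially the same route as the paper: construct the norm non-increasing $(\G,\A)$-cochain map to the standard resolution via Proposition~\ref{p:normnonincrpair}, pass to $K^\ast$ and cohomology, and invoke the fundamental lemma for pairs (Proposition~\ref{p:fundamentalpairs}) to conclude it is an isomorphism independent of the chosen lift. Your extra spelling-out of the inverse map and the homotopy argument is just the implicit content of the paper's appeal to the fundamental lemma, so there is no substantive difference.
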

\begin{proof}
 By Proposition~\ref{p:normnonincrpair},  there is a norm non-increasing $(\G,\A)$-cochain map~$(C^\ast,D^\ast,\varphi^\ast)\longrightarrow C^\ast(\G,\A;V)$, extending $(\id_{V},\id_{i^\ast V})$. Hence this map induces  a semi-norm non-increa\-sing map  $H^\ast(C^\ast,D^\ast,\varphi^\ast) \longrightarrow H^\ast_b(\G,\A;V).$ By the fundamental lemma for pairs, this map is an isomorphism in each degree and does not depend of the choice of the lift of $(\id_{V},\id_{i^\ast V})$. 
\end{proof}

\section{Amenability}
In this section, we define amenability for groupoids, similar to the definition of amenability of measured groupoids~\cite{AR99}.
\begin{def.}
 Let $\G$ be a groupoid and $V$ a normed $\G$-module. We define a normed $\G$-module $\ell^{\infty}(\G,V)$ by:
\begin{enumerate}
 \item For all $e\in\ob\G$ we set $\ell^{\infty}(\G,V)_e:= \ell^{\infty}(\G_{e},V_e),$ endowed with the $\|\cdot\|_{\infty}$-norm. Here, we denote by $\G_{e}$ the set of all morphisms in $\G$ ending in $e$, i.e., $\G_e = t^{-1}(e)$.
\item We define the $\G$-action by setting for all $g\in\G$
\begin{align*}
 \rho_{g}\colon \ell^{\infty}(\G,V)_{s(g)}&\longrightarrow \ell^{\infty}(G,V)_{t(g)}\\
\varphi&\longmapsto \bigl(h\longmapsto g\cdot \varphi(g^{-1}\cdot h)).
\end{align*}
\end{enumerate}
\end{def.}
\begin{rem}
 If $\G$ is a groupoid and $V$ a normed $\G$-module, there is a canonical $\G$-inclusion map $c_V \colon V\longrightarrow \ell^{\infty}(\G,V)$, corresponding to viewing elements in~$V$ as constant functions $\G\longrightarrow V$, given by setting for all~$e\in\ob\G$
\begin{align*}
 c_{Ve}\colon V_e&\longrightarrow \ell^{\infty}(\G,V)_e\\
 a &\longmapsto (g\longmapsto a).
\end{align*}
\end{rem}

\begin{def.}\label{d:amenablegroupoid}
Let $\G$ be a groupoid and $V$ a normed $\G$-module. 
\begin{enumerate}
 \item An \emph{equivariant mean on $\G$ with coefficients in $V$} is a $\G$-morphism 
\[
 m\colon \ell^{\infty}(\G,V)\longrightarrow V,
\]
satisfying $\|m\|_{\infty}=1$ and $m\circ c_V =\id_{V}$.
\item  If $V=\R_\G$, we call $m$ simply a \emph{(left)-invariant mean on $\G$} and we call $\G$ \emph{amenable} if there exists a (left-)invariant mean on $\G$. 
\end{enumerate}
\end{def.}
This definition clearly extends the definition of amenability for groups~\cite{PA88}. 
\begin{exa}
 Let $(A_i)_{i\in I}$ be a family of groups. Then $\amalg_{i\in I} A_i$ is amenable if and only if all $A_i$ are amenable groups.
\end{exa}
\begin{exa}
 Let $G$ be an amenable group acting on a set $X$. Then the action groupoid $G\ltimes X$ is amenable (Example~\ref{e:actiongrp}). 
\end{exa}
Starting from an invariant mean on $\G$, we can actually construct invariant means with coefficients in any dual $\G$-module:
\begin{prop}
 Let $\G$ be an amenable groupoid and $V$ a normed $\G$-module. Then there exists an equivariant mean on $\G$ with coefficients in ~$V'$
\end{prop}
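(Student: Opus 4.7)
The plan is to build the equivariant mean $m$ with coefficients in $V'$ by transporting an invariant mean $n\colon \ell^\infty(\G,\R_\G)\longrightarrow \R_\G$ (which exists by assumption) through the duality pairing, analogously to the classical group case.

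For each $e\in \ob \G$ and each $\varphi\in \ell^\infty(\G_e, V'_e)$, I would define
\[
m_e(\varphi)\colon V_e\longrightarrow \R,\qquad v\longmapsto n_e\bigl(h\mapsto \varphi(h)(v)\bigr).
\]
For fixed $v\in V_e$, the function $h\mapsto \varphi(h)(v)$ is bounded by $\|\varphi\|_\infty\cdot \|v\|$, so it lies in $\ell^\infty(\G_e,\R)$, and thus $n_e$ can be applied. The resulting assignment is linear in $v$ (because each $\varphi(h)$ is linear and $n_e$ is $\R$-linear) and satisfies $|m_e(\varphi)(v)|\le \|\varphi\|_\infty \|v\|$, so $m_e(\varphi)\in V'_e$ with $\|m_e(\varphi)\|\le \|\varphi\|_\infty$. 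This gives a well-defined family of bounded $\R$-linear maps with $\|m\|_\infty\le 1$.

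Next I would verify the two conditions of Definition~\ref{d:amenablegroupoid}. For $m\circ c_{V'}=\id_{V'}$: given $w\in V'_e$, the element $c_{V'}(w)$ is the constant function $h\mapsto w$, so $m_e(c_{V'}(w))(v)=n_e(h\mapsto w(v))=w(v)$, because $n_e$ applied to a constant function returns its value (a consequence of $n\circ c_{\R_\G}=\id_{\R_\G}$). This in particular forces $\|m\|_\infty=1$. For $\G$-equivariance, take $g\in\G$ with $s(g)=e_1$, $t(g)=e_2$, $\varphi\in\ell^\infty(\G_{e_1},V'_{e_1})$ and $v\in V_{e_2}$. Unwinding the definitions of the $\G$-actions on $\ell^\infty(\G,V')$ and $V'$,
\[
(m_{e_2}(\rho_g\varphi))(v)=n_{e_2}\bigl(h\mapsto \varphi(g^{-1}h)(g^{-1}v)\bigr),\quad (\rho_g\, m_{e_1}(\varphi))(v)=n_{e_1}\bigl(h\mapsto \varphi(h)(g^{-1}v)\bigr).
\]
Setting $f(h):=\varphi(h)(g^{-1}v)\in\ell^\infty(\G_{e_1},\R)$, the first expression is $n_{e_2}(\rho_g f)$ while the second is $n_{e_1}(f)$, and these coincide exactly because $n$ is a $\G$-map $\ell^\infty(\G,\R_\G)\to \R_\G$.

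The main obstacle is conceptual rather than computational: one must keep straight the different actions (the dual action on $V'$, the conjugation-type action on $\ell^\infty(\G,V')$, and the trivial action on $\R_\G$) and check that the assignment $\varphi\mapsto m_e(\varphi)$ lands in $V'_e$ rather than merely in the algebraic dual. Both are handled by the boundedness estimate above, after which everything reduces to applying the $\G$-equivariance of the given mean $n$.
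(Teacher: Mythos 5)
Your construction is exactly the one the paper uses: defining $m_V(\varphi)(v) = m_\R\bigl(g\mapsto \varphi(g)(v)\bigr)$ from the invariant mean on $\R_\G$, so your proposal is correct and takes the same route. The only difference is that the paper declares the verification to be clear, whereas you spell out the boundedness, the compatibility with $c_{V'}$, and the equivariance computation, all of which are correct.
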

\begin{proof}
Let $m_\R$ be a left-invariant mean on $\G$. Then the map 
 \begin{align*}
 m_V \colon \ell^\infty(\G,V')&\longrightarrow V'\\
 \varphi &\longmapsto \Bigl(v\longmapsto m_\R \bigl(g\longmapsto \varphi(g)(v)\bigr)\Bigr).
\end{align*}
is clearly an equivariant mean on $\G$ with coefficients in $V'$. 
\end{proof}
\section{The Algebraic Mapping Theorem}\label{s:algmap}
In this section, we show  that the bounded cohomology of a groupoid $\G$ relative to an amenable groupoid is equal to the bounded cohomology of $\G$ and discuss a characterisation of amenability in terms of bounded cohomology. The techniques used are similar to the group case~\cite{Mo01,No90}.

Let $i\colon \A\longrightarrow \G$ be a pair of groupoids. Recall that we write $L_\ast(\G)$ to denote the homogeneous Bar resolution of $\G$,  Definition~\ref{d:hombar}. Let $V$ be a Banach~$\G$-module. Denote by $K_L^\ast(\G,\A;V)$ the kernel of the map \[B(L_\ast(i),V) \colon B(L_\ast(\G),V)\longmapsto B(L_\ast(\A),i^\ast V).\] 
and write $j^\ast\colon K_L^\ast(\G,\A;V)\longhookrightarrow B(L_\ast(\G),V)$ to denote the canonical inclusion. Since by Proposition~\ref{p:homogeneous} there is a natural and canonical isometric isomorphism between $L_{\ast}(\G)$ and $C_{\ast}(\G)$, there is a canonical isometric isomorphism between $K_L^\ast(\G,\A;V)$ and $K^\ast(\G,\A;V)$. Thus we can also use the former to calculate bounded cohomology of the pair $(\G,\A)$. 
\begin{prop} Let $\G$ be a groupoid. 
 The following map is a norm non-increasing $\G$-chain map extending $\id_{\R_\G}$
\begin{align*}
 \Alt_n \colon L_n(\G) &\longrightarrow L_n(\G)\\
 (g_0,\dots,g_n) &\longmapsto \frac{1}{(n+1)!}\cdot \sum_{\sigma\in \Sigma_{n+1}} \sgn(\sigma)\cdot (g_{\sigma_0},\dots,g_{\sigma_n})
\end{align*}
For a Banach $\G$-module $V$, we write $\Alt^n_V\colon B(L_n(\G),V) \longrightarrow B(L_n(\G),V)$ for the corresponding dual map. 
\end{prop}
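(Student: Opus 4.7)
There are four things to check: (i) $\G$-equivariance of each $\Alt_n$, (ii) the chain map property, (iii) that $\Alt_\ast$ is norm non-increasing, and (iv) compatibility with the augmentation. The proof is essentially the same as in the group case; the groupoid structure enters only via the observation that if $(g_0,\dots,g_n)\in S_n(\G)_e$, then every permutation $(g_{\sigma_0},\dots,g_{\sigma_n})$ again lies in $S_n(\G)_e$, since the condition $t(g_i)=e$ is symmetric in the entries. Hence $\Alt_n$ is a well-defined $\R$-morphism $L_n(\G)\longrightarrow L_n(\G)$ in each object.

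For (i), since the $\G$-action acts diagonally on all entries, for $g\in\G$ and $(g_0,\dots,g_n)\in S_n(\G)_{s(g)}$ we have
\[
g\cdot\Alt_n(g_0,\dots,g_n)=\tfrac{1}{(n+1)!}\sum_{\sigma\in\Sigma_{n+1}}\sgn(\sigma)(g\cdot g_{\sigma_0},\dots,g\cdot g_{\sigma_n})=\Alt_n(g\cdot(g_0,\dots,g_n)),
\]
so each $\Alt_n$ is a $\G$-map. For (iii), each basis simplex $(g_0,\dots,g_n)$ is sent to a signed sum of $(n+1)!$ basis simplices with coefficients $\pm\tfrac{1}{(n+1)!}$, so the $\ell^1$-norm of the image is at most $1$. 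By the usual extension of norm non-increasing maps from basis elements, $\|\Alt_n\|_\infty\leq 1$. For (iv), since $\Sigma_1$ is trivial we have $\Alt_0=\id$, hence $\varepsilon\circ\Alt_0=\varepsilon$, so $\Alt_\ast$ extends $\id_{\R_\G}$.

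The main remaining step is (ii), the chain map identity $\partial_n\circ\Alt_n=\Alt_{n-1}\circ\partial_n$, which is the classical symmetric-group cancellation argument. On the one hand,
\[
\partial_n\Alt_n(g_0,\dots,g_n)=\tfrac{1}{(n+1)!}\sum_{\sigma\in\Sigma_{n+1}}\sum_{i=0}^n(-1)^i\sgn(\sigma)(g_{\sigma_0},\dots,\widehat{g_{\sigma_i}},\dots,g_{\sigma_n}).
\]
On the other hand,
\[
\Alt_{n-1}\partial_n(g_0,\dots,g_n)=\sum_{j=0}^n(-1)^j\tfrac{1}{n!}\sum_{\tau\in\Sigma_n}\sgn(\tau)(g_{\tau_0^j},\dots,g_{\tau_{n-1}^j}),
\]
where $\tau^j$ runs through permutations of $\{0,\dots,\hat j,\dots,n\}$. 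The two expressions are matched by the bijection sending $(\sigma,i)$ on the left to $(j,\tau)$ on the right, where $j=\sigma_i$ and $\tau$ is the induced permutation of the remaining indices; a direct sign count shows that $(-1)^i\sgn(\sigma)=(-1)^j\sgn(\tau)$, and the factor $(n+1)$ compensates for $\tfrac{1}{(n+1)!}$ versus $\tfrac{1}{n!}$ after summing over the $n+1$ choices of $i$ for each fixed $j$. This identifies the two expressions and completes (ii). The only delicate point is getting the sign bookkeeping right in the bijection, but since every element of $S_n(\G)$ has all entries with the same target, the combinatorics is identical to the group case, and the standard argument (e.g.\ as recorded for groups in the references) applies verbatim.
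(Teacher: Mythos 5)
Your proof is correct and matches the paper's approach: the paper simply cites the well-known group case (Monod, Section 7.4) as "easily verified", and your verification — equivariance via the diagonal action, the standard $(\sigma,i)\mapsto(j,\tau)$ sign bijection with $\sgn(\sigma)=(-1)^{i+j}\sgn(\tau)$, the $\ell^1$-estimate on basis simplices, and $\Alt_0=\id$ for the augmentation — is exactly that verification, with the only groupoid-specific point (permutations preserve the condition $t(g_i)=e$) correctly noted.
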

\begin{proof}
This is well-known for groups~\cite[Section 7.4]{Mo01} and easily verified. 
\end{proof}
\begin{prop}
 Let $\G$ be a groupoid and $\A\subset \G$ be an amenable subgroupoid.  Let $V$ be a Banach $\G$-module and $m_{\A}$ be an equivariant mean on~$\A$ with coefficients in $V'$ (with the induced $\A$-module structure). For each~$n\in \N$ and $i\in\{0,\dots,n\}$, define a map $\Phi_i^n \colon B(L_n(\G),V')\longrightarrow B(L_n(\G),V')$ by setting for all $\varphi \in B(L_n(\G),V')$ and $(g_0,\dots, g_0)\in S_n(\G)$
\[ 
\Phi^{n}_i(\varphi) (g_0,\dots,g_n) = g_i\cdot m_{A_{s(g_i)}} \bigl(a_i\longmapsto  g_i^{-1}\cdot \varphi(g_0,\dots,g_{i-1},g_i\cdot a_i,g_{i+1},\dots,g_n)\bigr)
\]
if $s(g_i)\in \ob\A$, and $\Phi^{n}_i(\varphi)(g_0,\dots,g_n) = \varphi(g_0,\dots,g_n)$ else. 

For each $n\in\N$, set $A_{\A}^n:= \Phi^n_0\circ\cdots\circ\Phi^n_n.$ Then $A_{\A}^\ast$ is a norm non-increasing~$\G$-cochain map extending $\id_{V'}$.  
\end{prop}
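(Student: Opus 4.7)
The plan is to verify the three properties separately: norm non-increasing, $\G$-equivariance, and the cochain/extension properties. The first two are essentially formal, and the cochain identity is the substantive point.

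For each $n\in\N$ and $i\in\{0,\dots,n\}$, the map $\Phi^n_i$ is norm non-increasing. Indeed, for fixed $(g_0,\dots,g_n)$ with $s(g_i)\in\ob\A$, the function $a_i\longmapsto g_i^{-1}\cdot\varphi(g_0,\dots,g_ia_i,\dots,g_n)$ lies in $\ell^\infty(\A_{s(g_i)},V'_{s(g_i)})$ with sup-norm at most $\|\varphi\|_\infty$ (since the $\G$-action is isometric); applying $m_\A$ (which has norm one) and then acting by $g_i$ preserves this bound. The case $s(g_i)\not\in\ob\A$ is the identity. For $\G$-equivariance, a short direct computation using the definition of the $\G$-action on $B(L_n(\G),V')$ shows $\Phi^n_i(g\cdot\varphi)=g\cdot\Phi^n_i(\varphi)$ (the $g$'s on the inside cancel against $g^{-1}\cdot h_i\cdot(g^{-1}h_i)^{-1}=h_i g^{-1}$ and we use only linearity of $m$). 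Thus $A^n_\A$, as a composition of such maps, is a norm non-increasing $\G$-map. The extension property $A^0_\A\circ\varepsilon_\G=\varepsilon_\G$ follows from the identity $m\circ c_{V'}=\id_{V'}$: for $v\in V'_{t(g)}$, the function $a\mapsto g^{-1}\cdot v$ is the constant function $c_{V'_{s(g)}}(g^{-1}v)$, so $\Phi^0_0(\varepsilon_\G(v))(g)=g\cdot m(c(g^{-1}v))=v$.

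The heart of the argument is to show $A^{n+1}_\A\circ\delta^n=\delta^n\circ A^n_\A$. Writing $\partial_j$ for the $j$-th face map on $L_\ast(\G)$ and $\partial_j^*$ for precomposition by $\partial_j$, so that $\delta^n=\sum_j(-1)^j\partial_j^*$, I will establish the following commutation lemma:
\begin{align*}
\Phi^{n+1}_i\circ\partial_j^* &= \partial_j^*\circ\Phi^n_i && \text{if } i<j,\\
\Phi^{n+1}_j\circ\partial_j^* &= \partial_j^*,\\
\Phi^{n+1}_i\circ\partial_j^* &= \partial_j^*\circ\Phi^n_{i-1} && \text{if } i>j.
\end{align*}
The first and third identities are a direct unraveling: after removing the entry $g_j$, the entry being averaged sits at the correct shifted position. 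The middle identity is the crucial cancellation: when the coordinate to be averaged is the one removed by $\partial_j$, the function inside the mean is constant in $a$, so $m\circ c=\id$ collapses $\Phi^{n+1}_j$ to the identity. Throughout, one must check that the source condition $s(g_i)\in\ob\A$ is preserved appropriately under the face maps; since $\partial_j$ does not touch the entries with index $\neq j$, this causes no issue.

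Combining these three rules, I can telescope $A^{n+1}_\A\circ\partial_j^*=\Phi^{n+1}_0\circ\cdots\circ\Phi^{n+1}_{n+1}\circ\partial_j^*$: reading from the right, each $\Phi^{n+1}_i$ with $i>j$ passes through $\partial_j^*$ and becomes $\Phi^n_{i-1}$, the factor $\Phi^{n+1}_j$ disappears, and each $\Phi^{n+1}_i$ with $i<j$ passes through unchanged as $\Phi^n_i$. This produces exactly the ordered composition $\Phi^n_0\circ\cdots\circ\Phi^n_n=A^n_\A$ applied on the inside of $\partial_j^*$, giving $A^{n+1}_\A\circ\partial_j^*=\partial_j^*\circ A^n_\A$. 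Summing with signs yields the desired cochain identity. The main obstacle is bookkeeping the index shifts cleanly in this telescoping; the decisive conceptual input, however, is the collapse $\Phi^{n+1}_j\circ\partial_j^*=\partial_j^*$, which is what makes averaging over all coordinates symmetric enough to commute with the alternating sum $\delta^n$.
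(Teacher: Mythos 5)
Your proposal is correct and follows essentially the same route as the paper: the extension property via $m\circ c_{V'}=\id_{V'}$, norm non-increase and $\G$-equivariance as formal checks, and the cochain identity via exactly the same three commutation rules between the $\Phi$'s and the coboundary summands (with the collapse $\Phi^{n+1}_j\circ\partial_j^\ast=\partial_j^\ast$ as the key case), followed by the telescoping over the composition $A^\ast_\A$. The paper states these steps more tersely, but the argument is the same.
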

\begin{proof}\hfill
 \begin{enumerate}
  \item The map $A_\A^\ast$  extends $\id_{V'}$ because the mean of a constant function is equal to the value of the function. 
\item  It follows directly that for all $n\in\N_{>0}$
\[
 \fa{i,j\in \{0,\dots,n\}} \Phi^n_j\circ \delta_i^{n-1} = \begin{cases}
                                                        \delta_i^{n-1} \circ \Phi^{n-1}_j &\text{if } i>j\\
							\delta^{n-1}_i \circ \Phi_{j-1}^{n-1} &\text{if } i<j\\
							\delta_i^{n-1} &\text{if } i=j.
                                                       \end{cases}
\]
Thus $A^\ast_\A$ is a cochain map.
\item  The map $A_\A^\ast$ is norm non-increasing because means are norm non-increa\-sing. 
\item Clearly all $\Phi_i^n$ are  $\G$-equivariant, hence $A_{\A}^\ast$ is also $\G$-equivariant.\qedhere

 \end{enumerate}
\end{proof}
\begin{prop}\label{p:meanfactorizes}
Let $\G$ be a groupoid and $\A\subset\G$ be an amenable subgroupoid. Let $V$ be a Banach $\G$-module. Then the composition \[\Alt_{V'}^\ast \circ A_{\A}^\ast \colon B(L_\ast(\G),V') \longrightarrow B(L_{\ast}(\G),V')\] is a norm non-increasing $\G$-map extending $\id_{V'}$. For $n\in\N_{\geq 1}$, it factors through~$K_{L}^n(\G,\A;V')$, i.e., the following diagram commutes: 
    \begin{center}
\begin{tikzpicture}
\matrix (m) [scale = 0.5,matrix of math nodes, row sep=3em,
column sep=1em, text height=1.5ex, text depth=0.25ex]
{
B(L_n(\G), V') & &B(L_n(\G), V')\\
&K_{L}^n(\G,\A;V')
\\ };
\path[->]
(m-1-1) edge node[auto] {$\Alt^n_{V'} \circ A_\A^n$}(m-1-3)
(m-1-1) edge node[auto] {$  $}(m-2-2)
(m-2-2) edge node[right=5pt] {$ j^n$}(m-1-3)
;
\end{tikzpicture}
\end{center}
\end{prop}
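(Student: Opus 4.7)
The first two claims---that $\Alt_{V'}^\ast \circ A_\A^\ast$ is a norm non-increasing $\G$-cochain map extending $\id_{V'}$---follow immediately from the two preceding propositions, since norm non-increasing $\G$-cochain maps extending $\id_{V'}$ are closed under composition.

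For the factorization, fix $n \geq 1$ and $\varphi \in B(L_n(\G), V')$. I would prove the stronger intermediate statement that $A_\A^n(\varphi)$ is \emph{constant} on each fibre $S_n(\A)_e$: there is an element $w(e) \in V'_e$ (depending only on $\varphi$ and $e$) such that $A_\A^n(\varphi)(a_0, \ldots, a_n) = w(e)$ for every $(a_0, \ldots, a_n) \in S_n(\A)_e$. Granted this, the evaluation of $\Alt_{V'}^n(A_\A^n(\varphi))$ on any such tuple becomes
\begin{equation*}
w(e) \cdot \frac{1}{(n+1)!} \sum_{\sigma \in \Sigma_{n+1}} \sgn(\sigma) \;=\; 0,
\end{equation*}
since the signed sum over $\Sigma_{n+1}$ vanishes for $n+1 \geq 2$. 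Hence $\Alt_{V'}^n \circ A_\A^n$ takes values in $K_L^n(\G,\A;V')$, yielding the desired factorization.

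The constancy of $A_\A^n(\varphi)$ on $S_n(\A)_e$ is the technical heart of the argument. I would first show that for each $i$ and each tuple $(g_0, \ldots, g_n)$ with $g_i \in \A$ and $t(g_i) = e$, the value $\Phi_i^n(\varphi)(g_0, \ldots, g_n)$ does not depend on $g_i$. Setting $\psi(c) := \varphi(g_0, \ldots, g_{i-1}, c, g_{i+1}, \ldots, g_n)$ for $c \in \A_e$, the function appearing inside the mean in the definition of $\Phi_i^n$ is precisely $\rho_{g_i^{-1}}\psi \in \ell^\infty(\A_{s(g_i)}, V'_{s(g_i)})$. The $\A$-equivariance of $m_\A$ then gives $m_{\A_{s(g_i)}}(\rho_{g_i^{-1}}\psi) = g_i^{-1}\cdot m_{\A_e}(\psi)$, whence
\begin{equation*}
 \Phi_i^n(\varphi)(g_0,\ldots,g_n) \;=\; g_i \cdot g_i^{-1} \cdot m_{\A_e}(\psi) \;=\; m_{\A_e}\bigl(c \mapsto \varphi(g_0,\ldots,g_{i-1},c,g_{i+1},\ldots,g_n)\bigr),
\end{equation*}
which is independent of $g_i$. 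Iterating this identity over $i = n, n-1, \ldots, 0$---at each stage the coordinates surviving in the expression are either original $a_j \in \A$ or fresh averaging variables $c_j \in \A_e \subset \A$, so the next $\Phi^n$ can again invoke its averaging step---expresses $A_\A^n(\varphi)(a_0, \ldots, a_n)$ as an iterated nested mean $m_{\A_e}(c_0) \cdots m_{\A_e}(c_n)[\varphi(c_0, \ldots, c_n)]$ with no remaining dependence on $(a_0, \ldots, a_n)$.

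The main obstacle is the careful groupoid bookkeeping of sources and targets---in particular, verifying the invariance identity $m_{\A_{s(g_i)}} \circ \rho_{g_i^{-1}} = \rho_{g_i^{-1}} \circ m_{\A_e}$ from the $\A$-equivariance of the mean, and checking that the successive applications of the $\Phi_i^n$ remain valid on evaluations at elements of $S_n(\A)_e$. Once these are in place, the argument is essentially formal.
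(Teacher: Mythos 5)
Your proposal is correct and follows essentially the same route as the paper: both show, using the equivariance of the mean, that $\Phi_i^n(\varphi)$ evaluated on a tuple in $S_n(\A)_e$ does not depend on the $i$-th entry, conclude that $A_\A^n(\varphi)$ is constant on $S_n(\A)_e$, and then observe that alternation annihilates constants since $\sum_{\sigma\in\Sigma_{n+1}}\sgn(\sigma)=0$ for $n\geq 1$. Your explicit nested-mean formula is a slight refinement of the paper's replacement computation, but the underlying argument is the same.
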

\begin{proof}
We have to show that $B(L_n(i),V')\circ \Alt^n_{V'}  \circ A_\A^n=0$ for all $n\in\N_{\geq 1}$. But for all $e\in\ob\A$ and $(a'_0,\dots,a'_n)\in \A_e^{n+1}$, $b_i'\in A_e$ and $\varphi \in B(L_n(\G),V')_e$
\begin{align*}
\Phi_i^n&(\varphi)(a_0',\dots,a_n')\\ &= a_i'\cdot m_{A_s(a'_i)} \bigl(a_i\longmapsto {a'}_i^{-1} \cdot \varphi(a'_0,\dots,a'_i\cdot a_i,\dots,a'_n)\bigr)\\
 &= a_i'\cdot ({b_i'}^{-1}\cdot {a_i'})^{-1} \\
&\phantom{=}\cdot m_{A_s(b'_i)}\bigl(a_i\longmapsto ({b_i'}^{-1}\cdot {a_i'})\cdot {a'}_i^{-1} \cdot \varphi(a'_0,\dots,a'_i\cdot ({b_i'}^{-1}\cdot {a_i'})^{-1} a_i,\dots,a'_n)\bigr)\\
&=  {b_i'}\cdot m_{A_{s(b_i')}} \bigl(a_i\longmapsto  {b_i'}^{-1}\cdot \varphi(a'_0,\dots, b_i'\cdot a_i,\dots,a'_n)\bigr)\\
&= \Phi_i^n(\varphi)(a_0',\dots,b_i',\dots,a_n')
\end{align*}
Hence $\Phi_i^n(\varphi)(a'_0,\dots,a'_n)$ does not depend on $a_i'$. Therefore, $A_{\A}^n(\varphi\circ L_n(i))$ is constant for all $\varphi\in B(L_n(\G),V')$ and in particular invariant under permutations of the arguments. Thus, $B(L_n(i),V')\circ \Alt^n_{V'}  \circ A_\A^n=0$
\end{proof}
\begin{cor}\label{c:amenablevanish}
 Let $\G$ be an amenable groupoid and $V$ a Banach $\G$-module. Then $H_b^n(\G,V') =0$ for all $n\in\N_{\geq 1}$.
\end{cor}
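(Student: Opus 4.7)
The plan is to specialise Proposition~\ref{p:meanfactorizes} to the case $\A=\G$ and then apply the fundamental lemma. First, observe that when $i=\id_\G$ the cochain map $B(L_*(i),V')$ is just the identity, so its kernel $K_L^n(\G,\G;V')$ is the zero module for every $n\in\N$. Feeding this into Proposition~\ref{p:meanfactorizes} (with $\A=\G$), we conclude that
\[
 \Alt^n_{V'}\circ A_\G^n = 0 \qquad \text{for all } n\in\N_{\geq 1},
\]
since this $\G$-cochain map factors through the trivial module in each of these degrees.

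Next, I would record that $(B(L_*(\G),V'),\varepsilon_\G)$ is a strong, relatively injective $\G$-resolution of $V'$: it is strong by (the homogeneous analogue of) Example~\ref{e:cochaincontr}, and relative injectivity in each degree follows from Proposition~\ref{p:boundedrelinj} together with the chain isometry $L_n(\G)\cong C_n(\G)$ supplied by Proposition~\ref{p:homogeneous}. Both $\id$ and $\Alt^*_{V'}\circ A_\G^*$ are bounded $\G$-cochain endomorphisms of this resolution that extend $\id_{V'}$, so the fundamental lemma (Proposition~\ref{t:fundamental}) yields a bounded $\G$-cochain homotopy
\[
 k^*\colon B(L_*(\G),V')\longrightarrow B(L_{*-1}(\G),V')
\]
between them.

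Applying the $\G$-invariants functor (which is exact enough to preserve cochain homotopies, see Remark~\ref{r:categorical}), we obtain an $\R$-cochain homotopy $(k^*)^\G$ between $\id$ and $(\Alt^*_{V'}\circ A_\G^*)^\G$ on the complex $B_\G(L_*(\G),V')$. In degrees $n\geq 1$ the second map vanishes by the first step, so the identity on $B_\G(L_n(\G),V')$ is cochain homotopic to zero. Hence its induced map on cohomology is simultaneously the identity and the zero map, forcing
\[
 H^n_b(\G;V') \;\cong\; H^n\bigl(B_\G(L_*(\G),V')\bigr) \;=\; 0 \qquad \text{for all } n\geq 1,
\]
where the first isomorphism uses Proposition~\ref{p:homogeneous}.

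The only delicate point is verifying that $K_L^n(\G,\G;V')=0$ so that Proposition~\ref{p:meanfactorizes} indeed kills $\Alt^*_{V'}\circ A_\G^*$ in positive degrees; once that observation is in place the rest is bookkeeping with the fundamental lemma and the invariants functor.
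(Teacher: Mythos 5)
Your proof is correct and follows essentially the same route as the paper: specialise Proposition~\ref{p:meanfactorizes} to $\A=\G$ (so the map $\Alt^\ast_{V'}\circ A^\ast_\G$ vanishes in degrees $\geq 1$, since $K_L^\ast(\G,\G;V')=0$) and then use the fundamental lemma, Proposition~\ref{t:fundamental}, to see that this map nevertheless induces the identity on $H^\ast_b(\G;V')$, forcing the vanishing. You merely unpack the paper's one-line appeal to the fundamental lemma into the explicit $\G$-cochain homotopy and the passage to invariants, which is fine.
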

\begin{proof}
 Since the composition $\Alt_{V'}^\ast \circ A_{\G}^\ast \colon B(L_n(\G),V') \longrightarrow B(L_{\ast}(\G),V')$ is a $\G$-map extending $\id_{V'}$, by the fundamental lemma, Theorem~\ref{t:fundamental}, it induces the identity on bounded cohomology. On the other hand, by Proposition~\ref{p:meanfactorizes}, it factors through the trivial complex $K_L^\ast(\G,\G;V')$ in degree greater or equal~1 and hence $H_b^{\ast}(\G; V') =0$ for all $n\in\N_{\geq 1}$. 
\end{proof}

\begin{cor}[Algebraic Mapping Theorem]\label{c:algebraicmappingthm}
 Let $i\colon \A\longhookrightarrow\G$ be a pair of groupoids such that $\A$ is amenable. Let $V$ be Banach $\G$-module. Then 
\[
 H^n(j^\ast)\colon H_b^n(\G,\A;V') \longrightarrow H_b^n(\G;V')
\]
is an isometric isomorphism for each $n\in\N_{\geq 2}$.
\end{cor}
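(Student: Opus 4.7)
The plan is to combine the long exact sequence of the pair $(\G,\A)$ with the vanishing result of Corollary~\ref{c:amenablevanish} to establish bijectivity, and then to upgrade this bijection to an isometry using the explicit norm non-increasing $\G$-cochain map $\Alt_{V'}^{\ast}\circ A_{\A}^{\ast}$ constructed in Proposition~\ref{p:meanfactorizes}. Throughout I would work with the homogeneous resolution $L_\ast(\G)$ and the kernel complex $K_L^\ast(\G,\A;V')$, using that the isomorphism of Proposition~\ref{p:homogeneous} identifies it isometrically with the standard relative complex, so that bounded cohomology of the pair may equally well be computed from either side.

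First, for bijectivity: since $\A$ is amenable, $i^\ast V'$ is a dual Banach $\A$-module and Corollary~\ref{c:amenablevanish} gives $H_b^k(\A;i^\ast V')=0$ for every $k\geq 1$. Substituting this into the long exact sequence
\[
 \cdots\longrightarrow H_b^{n-1}(\A;i^\ast V')\longrightarrow H_b^n(\G,\A;V')\longrightarrow H_b^n(\G;V')\longrightarrow H_b^n(\A;i^\ast V')\longrightarrow\cdots
\]
both neighbors of $H^n(j^\ast)$ vanish whenever $n\geq 2$, so $H^n(j^\ast)$ is bijective in that range.

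Second, for the isometry, one direction is free: the map $j^\ast$ is the inclusion of a subcomplex endowed with the subspace norm, hence is norm non-increasing on cocycles, so $H^n(j^\ast)$ is semi-norm non-increasing in every degree. For the reverse inequality I would use Proposition~\ref{p:meanfactorizes}: the composition $\Alt_{V'}^\ast\circ A_{\A}^\ast\colon B(L_\ast(\G),V')\to B(L_\ast(\G),V')$ is a norm non-increasing $\G$-cochain map extending $\id_{V'}$, hence by the fundamental lemma (Proposition~\ref{t:fundamental}) it is $\G$-cochain homotopic to $\id$ and induces the identity on $H_b^\ast(\G;V')$. Crucially, for $n\geq 1$ this map factors through $K_L^n(\G,\A;V')$ via $j^n$. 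Given a class $\alpha\in H_b^n(\G;V')$ and $\varepsilon>0$, choose a cocycle $\beta\in B(L_n(\G),V')^\G$ representing $\alpha$ with $\|\beta\|_\infty\leq\|\alpha\|+\varepsilon$. Then $\gamma:=\Alt_{V'}^n A_{\A}^n(\beta)$ is a cocycle of $K_L^n(\G,\A;V')$ satisfying $\|\gamma\|_\infty\leq\|\beta\|_\infty$, and its class in $H_b^n(\G,\A;V')$ is mapped by $H^n(j^\ast)$ to $\alpha$ (since $[\gamma]=[\beta]$ in $H_b^n(\G;V')$). Consequently the unique preimage $\tilde\alpha$ of $\alpha$ has semi-norm at most $\|\alpha\|+\varepsilon$, and letting $\varepsilon\to0$ yields $\|\tilde\alpha\|\leq\|\alpha\|$, completing the isometry.

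The genuinely difficult step has already been packaged into Proposition~\ref{p:meanfactorizes}, namely checking that the mean-and-alternation construction kills cochains coming from $\A$ (which exploits both the invariance of the mean and full antisymmetrisation). Once that technical fact is in hand, the remainder of the argument is essentially a long-exact-sequence plus fundamental-lemma assembly, and the restriction $n\geq 2$ arises precisely because in degree $n=1$ the neighboring group $H_b^0(\A;i^\ast V')$ on the left of the long exact sequence need not vanish.
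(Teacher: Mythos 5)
Your proposal is correct and follows essentially the same route as the paper: bijectivity in degrees $n\geq 2$ from the long exact sequence combined with the vanishing of $H_b^{\ast}(\A;i^{\ast}V')$ in positive degrees (Corollary~\ref{c:amenablevanish}), and the isometry from the norm non-increasing inclusion $j^{\ast}$ in one direction together with the norm non-increasing factorisation of $\Alt_{V'}^{\ast}\circ A_{\A}^{\ast}$ through $K_L^{\ast}(\G,\A;V')$ (Proposition~\ref{p:meanfactorizes}) and the fundamental lemma in the other. You merely spell out the $\varepsilon$-argument for the norm of the preimage, which the paper leaves implicit.
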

\begin{proof}
The map $j^\ast$ is norm non-increasing and by Corollary~\ref{c:amenablevanish} and the long exact sequence, induces a norm non-increasing isomorphism in bounded cohomology in degree $n\in \N_{\geq 2}$. Since $\Alt_{V'}^\ast \circ A_{\A}^\ast$ is a $\G$-map extending~$\id_{V'}$, by the fundamental lemma, Theorem~\ref{t:fundamental}, it induces the identity on bounded cohomology. The map $ B(L_n(\G),V')\longrightarrow K_L^n(\G,\A,V')$ induced by~$\Alt_{V'}^\ast \circ A_{\A}^\ast$ in degree $n\in\N_{\geq 1}$ is also norm non-increasing. Therefore, by Proposition~\ref{p:meanfactorizes},  $H^n(j^\ast)$ is an isometric isomorphism for $n\in\N_{\geq 2}$.  
\end{proof}

\begin{rem}
Even when considering only groups, the algebraic mapping theorem is wrong in general  if we use coefficients in general modules, not just dual spaces~\cite[Section 7.2 and 7.5]{No90}. 
\end{rem}
As for groups, the converse to Corollary~\ref{c:amenablevanish}  holds in the strong sense that vanishing in degree 1 for a certain dual space is sufficient for a groupoid to be amenable. Indeed, consider the Banach space $\Sigma \R_\G:=\coker c_{\R_\G}$:
\begin{prop}\label{p:amenablebounded}
 Let $\G$ be a groupoid. If $H_b^1(\G;(\Sigma \R_\G)') =0 $, then $\G$ is amenable. 
\end{prop}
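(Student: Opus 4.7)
The argument mimics the classical Johnson--Nowak characterisation of amenable groups. First, I would consider the short exact sequence of Banach $\G$-modules
\[
0 \longrightarrow \R_\G \xrightarrow{c_{\R_\G}} \ell^{\infty}(\G, \R_\G) \longrightarrow \Sigma \R_\G \longrightarrow 0,
\]
and observe that it is strong: the (non-equivariant) family of evaluation maps $\ell^{\infty}(\G_e) \to \R$, $\varphi \mapsto \varphi(\id_e)$, gives a norm-non-increasing retraction of $c_{\R_\G}$. Since $c_{\R_\G}$ is an isometric embedding and $\R_\G$ carries the trivial action (so $\R_\G' = \R_\G$), dualisation produces the strong short exact sequence
\[
0 \longrightarrow (\Sigma \R_\G)' \longrightarrow \ell^{\infty}(\G, \R_\G)' \xrightarrow{\;c_{\R_\G}'\;} \R_\G \longrightarrow 0.
\]

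Next, I would pass to bounded cohomology. The above strong short exact sequence of coefficient modules induces, via the standard construction using the Bar resolution, a natural long exact sequence in bounded cohomology whose low-degree piece reads
\[
H_b^0(\G; \ell^\infty(\G, \R_\G)') \xrightarrow{\;H_b^0(c_{\R_\G}')\;} H_b^0(\G; \R_\G) \xrightarrow{\;\delta^0\;} H_b^1(\G; (\Sigma \R_\G)').
\]
The hypothesis $H_b^1(\G; (\Sigma\R_\G)') = 0$ forces $H_b^0(c_{\R_\G}')$ to be surjective. Since $H_b^0(\G; V) \cong V^{\G}$ for any Banach $\G$-module $V$, the canonical invariant $\mathbf{1} = (1)_{e \in \ob \G} \in (\R_\G)^\G$ admits a lift $m \in (\ell^{\infty}(\G, \R_\G)')^\G$. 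Unwinding definitions, such an $m$ is exactly a bounded $\G$-map $m\colon \ell^{\infty}(\G, \R_\G) \longrightarrow \R_\G$ satisfying $m \circ c_{\R_\G} = \id_{\R_\G}$.

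The main obstacle is the remaining step: arranging $\|m\|_{\infty} = 1$, which is needed for $m$ to be an invariant mean in the sense of Definition~\ref{d:amenablegroupoid}. The abstract long exact sequence only produces an invariant lift of some finite norm. To control the norm I would exploit the dual structure. Since $c_{\R_\G}$ is isometric, its dual $c_{\R_\G}'$ is a metric quotient map; moreover $(\ell^{\infty}(\G, \R_\G)')^{\G}$ is weak-$\ast$ closed in $\ell^{\infty}(\G, \R_\G)'$, hence the unit ball of this space of invariants is weak-$\ast$ compact. The plan is to produce, for every $\varepsilon>0$, a $\G$-equivariant lift $m_\varepsilon$ of $\mathbf{1}$ with $\|m_\varepsilon\|_{\infty} \le 1+\varepsilon$ by combining the metric quotient property with the surjectivity on invariants, and then to take a weak-$\ast$ accumulation point as $\varepsilon \to 0$; the limit remains in the weak-$\ast$ closed subspace of invariants, still satisfies $m \circ c_{\R_\G} = \id_{\R_\G}$, and has norm equal to $1$ (the lower bound being automatic since $m \circ c_{\R_\G} = \id$). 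This $m$ is the desired invariant mean, so $\G$ is amenable.
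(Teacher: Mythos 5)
Your first two steps are sound and essentially reproduce Noskov's argument in the group case, which is what the paper itself invokes: the strong short exact sequence of coefficients, its dual, and the resulting surjectivity of $H_b^0(\G;\ell^\infty(\G,\R_\G)')\to H_b^0(\G;\R_\G)$ do produce an invariant element $m\in\bigl(\ell^\infty(\G,\R_\G)'\bigr)^{\G}$ with $m\circ c_{\R_\G}=\id_{\R_\G}$. (The long exact sequence for a strong coefficient sequence is not set up in the paper, but it is routine for the Bar resolution, since $B_{\G}(C_n(\G),\args)$ is determined by values on orbit representatives of $P_n(\G)$ and one can splice in the non-equivariant section there.)

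The genuine gap is the normalisation step, and your proposed fix does not close it. You need, for every $\varepsilon>0$, an \emph{equivariant} lift $m_\varepsilon$ of $\mathbf{1}$ with $\|m_\varepsilon\|_\infty\le 1+\varepsilon$, and you claim to get this by ``combining the metric quotient property with the surjectivity on invariants.'' But these two ingredients live on different spaces: Hahn--Banach gives non-equivariant lifts of norm $1$, while the long exact sequence gives an equivariant lift of uncontrolled norm, and there is no open-mapping-type principle for the restriction of a metric surjection to invariants. Even quantifying the vanishing of $H_b^1$ via the open mapping theorem only bounds the primitive $\beta$ of the Noskov cocycle $g\mapsto g\cdot\mu_0-\mu_0$ by some constant $K$, yielding $\|m\|_\infty\le 1+2K$, not $1+\varepsilon$; in fact the existence of equivariant almost-norm-one lifts is essentially equivalent to the amenability you are trying to prove, so the argument as written is circular. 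The weak-$\ast$ compactness part is fine once the net exists, but the net is the whole difficulty. The standard repair (and the one behind the cited proof) uses the Banach lattice structure of $\ell^\infty(\G_e)'$: since each $\rho_g$ acts by precomposition with a bijection $\G_{t(g)}\to\G_{s(g)}$, it is a lattice isometry, so by uniqueness of the Jordan decomposition the family $(|m_e|)_{e\in\ob\G}$ is again invariant; it is positive, so $|m_e|(\mathbf{1})=\|m_e\|\ge |m_e(\mathbf{1})|=1$, and $\|m_e\|$ is constant on each connected component by invariance, whence $\bigl(|m_e|/\|m_e\|\bigr)_{e\in\ob\G}$ is an invariant mean. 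You should replace your compactness argument by this positivity argument.
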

\begin{proof}
 The proof~\cite[Proposition 4.2.7]{Bl14} is basically the same as Noskov's proof~\cite[Section 7.1]{No90} in the group case.
\end{proof}

\section{The Topological Resolution}
In this section, let $X$ be a CW-complex. Let $p\colon \widetilde X\longrightarrow X$ be the universal cover. For each $n\in \N$, write $S_n(\widetilde X)$ for the set of singular $n$-simplices in~$\widetilde X$. The group~$\Deck(X)$ of deck transformations of $p$ acts (from the left) on $S_n(\widetilde X)$, $C^{\text{sing}}_n(\widetilde X)$ and $\widetilde X$ as usual. Again, we will denote the fundamental groupoid of~$X$  by $\pi_1(X)$. For each~$x\in\widetilde X$, denote the connected component of $\widetilde X$ containing~$x$ by $\widetilde X_x$. For $\sigma \in S_n(X)$, we will also write $\sigma_0,\dots,\sigma_n\in X$ to denote the vertices of $\sigma$.

\begin{def.}
 Let $X$ be a CW-complex. There is a partial action of~$\pi_1(X)$ on $\widetilde X$, i.e., for each $\gamma \in \pi_1(X)$ there is a bijection
\begin{align*}
 \rho_{\gamma} \colon p^{-1}(s(\gamma)) &\longrightarrow p^{-1}(t(\gamma))\\
 x&\longmapsto \gamma\cdot x:= \widetilde{\gamma_{x}}(1).
\end{align*}
Here, for each $x\in\widetilde X$, let $\widetilde{\gamma_x}$ denote the lift of a representative of $\gamma$ starting at~$x$. Moreover, we have $ \rho_{\id_x} = \id_{p^{-1}(x)}$ for all $x\in X$ and~$ \rho_{\gamma'\cdot \gamma} = \rho_{\gamma'}\circ \rho_{\gamma}$ for all $\gamma,\gamma'\in \pi_1(X)$ with $s(\gamma')=t(\gamma)$.
This induces a $\pi_1(X)$-module structure on $\R[\widetilde{X}]:= \bigoplus_{\widetilde X}\R$.

\end{def.}

\begin{lemma}\label{l:tgaction}
 For all points $x\in \widetilde X$, all $\gamma\in \pi_1(X)$ with $s(\gamma)=p(x)$ and all~$\beta\in \Deck(X)$ we have
\[
 \beta(\gamma\cdot x) = \gamma\cdot\beta(x).
\]
In other words, $\R[\widetilde X]$ is a $(\Deck(X),\pi_1(X))$-bi-module, i.e., represented by a functor
$\pi_1(X) \longrightarrow \DeckMod$. 
\end{lemma}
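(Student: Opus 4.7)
The plan is to unwind the definition of $\gamma\cdot x$ in terms of path lifting and to exploit the defining property of a deck transformation, namely that $\beta$ is a homeomorphism of $\widetilde X$ satisfying $p\circ\beta=p$. Concretely, pick a representative path $c\colon [0,1]\to X$ of the homotopy class $\gamma$, and let $\widetilde{c}_x\colon [0,1]\to\widetilde X$ be the unique lift of $c$ starting at $x$, so that by definition $\gamma\cdot x=\widetilde{c}_x(1)$.

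The central observation is that $\beta\circ\widetilde{c}_x$ is itself a path in $\widetilde X$ starting at $\beta(x)$, and since $p\circ\beta=p$ we have $p\circ(\beta\circ\widetilde{c}_x)=p\circ\widetilde{c}_x=c$. By uniqueness of path lifting with a prescribed initial point, $\beta\circ\widetilde{c}_x$ must coincide with the lift $\widetilde{c}_{\beta(x)}$ of $c$ starting at $\beta(x)$. Evaluating at $1$ then gives
\[
\beta(\gamma\cdot x)=\beta\bigl(\widetilde{c}_x(1)\bigr)=\widetilde{c}_{\beta(x)}(1)=\gamma\cdot\beta(x).
\]
A brief remark should note that $p(\beta(x))=p(x)=s(\gamma)$, so $\gamma\cdot\beta(x)$ is defined, and that the whole construction is independent of the chosen representative $c$, because homotopic representatives of $\gamma$ lift to paths with common endpoints.

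For the second assertion, I would observe that $\Deck(X)$ acts on $\widetilde X$ from the left by homeomorphisms, so it induces an $\R$-linear left action on $\R[\widetilde X]$, and this action commutes in each fiber with the partial $\pi_1(X)$-action by the identity just proved. Packaging this categorically: sending each object $e\in\operatorname{ob}\pi_1(X)=X$ to the $\Deck(X)$-module $\R[p^{-1}(e)]$ and each morphism $\gamma$ to the isomorphism induced by $\rho_\gamma$ yields a well-defined functor $\pi_1(X)\longrightarrow\DeckMod$ representing $\R[\widetilde X]$ as a $(\Deck(X),\pi_1(X))$-bi-module. The only real content is the commutation identity above; everything else is a formal bookkeeping of actions, and I do not expect any technical obstacle beyond carefully stating the domains of the partial action.
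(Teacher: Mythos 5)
Your proof is correct and follows essentially the same route as the paper: both arguments observe that $\beta\circ\widetilde{\gamma_x}$ is a lift of a representative of $\gamma$ starting at $\beta(x)$ and conclude by uniqueness of lifts, evaluating at $1$. The extra remarks on well-definedness and the functorial packaging are fine but add nothing beyond what the paper treats as immediate.
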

\begin{proof}
If $\widetilde{\gamma_x}$ is a lift of a representative of $\gamma$ starting in $x$, then $\beta\cdot\widetilde{\gamma_x}$ is a  lift of a representative of $\gamma$ starting in $\beta(x)$ hence
\[
 \gamma \cdot (\beta(x)) = \widetilde {\gamma_{\beta(x)}}(1)\\
 = \beta (\widetilde{\gamma_x}(1))\\
= \beta (\gamma\cdot x). \qedhere
\]
\end{proof}
\noindent Now, we introduce a $\pi_1(X)$-version of the $\pi_1(X,x)$-chain complex $C_{\ast}^{\sing}(\widetilde X)$:

\begin{def.}\hfill
\begin{enumerate}
 \item  For all $n\in\N$, we define the set 
\begin{align*}
 Q_n(X):= \frac{\{(\sigma, x)\in S_n(\widetilde X)\times \widetilde X\mid \im \sigma \subset \widetilde X_x\} }{\Bigl((\beta\cdot \sigma, \beta\cdot x) \sim (\sigma, x) \mid \beta\in\Deck(X)\Bigl)}.
\end{align*}
By a slight abuse of notation, we denote the class of a $(\sigma,x)\in S_n(\widetilde X)\times \widetilde X$ in $Q_n(X)$ also by $(\sigma,x)$. For all $e\in X$ we set \[Q_n(X)_e:= \{(\sigma,x)\in Q_n(X)\mid p(x)=e\}.\] 
\item For all $n\in\N$ we define a normed $\pi_1(X)$-module $C_n(X)$ via
\[
 C_n(X) = (\R\langle Q_n(X)_e\rangle)_{e\in X}
\]
with the partial $\pi_1(X)$-action given by setting for each $\gamma\in \pi_1(X)$
\begin{align*}
 \rho_\gamma \colon C_n(X)_{s(\gamma)} &\longrightarrow C_n(X)_{t(\gamma)}\\
 (\sigma, x)&\longmapsto (\sigma, \gamma\cdot x).
\end{align*}
This action is well-defined by Lemma~\ref{l:tgaction}. Finally, we endow $C_\ast(X)$ with the $\ell^1$-norm with respect to the basis $Q_\ast(X)$.
\item For all $n\in\N_{>0}$, we define boundary maps $\partial_n\colon C_n(X)\longrightarrow C_{n-1}(X)$ via 
\[
 \partial_n(\sigma,x) := \sum_{i=0}^n (-1)^i (\partial_{i,n}\sigma,x).
\]
Here, we write $\partial_{i,n}$ to denote the usual $i$-th face map in dimension $n$. These maps are clearly $\pi_1(X)$-equivariant and well-defined since the action by deck transformations is compatible with the usual boundary maps. The $\partial_n$ are bounded and for every $n\in\N$, we have~$\|\partial_n\|_{\infty} \leq n+1$. Furthermore, these are obviously boundary maps.
\item Additionally, we will consider the canonical augmentation map
\begin{align*}
 \varepsilon \colon C_0(X)&\longrightarrow \R[X]\\
 (\sigma,x)&\longmapsto p(x)\cdot 1. 
\end{align*}
Here, we denote by $\R[X]$ the trivial $\pi_1(X)$-module $\R_{\pi_1(X)}$.

\item   For any subset $\emptyset\neq I\subset X$, we analogously define a normed~$\pi_1(X,I)$-chain complex $(C_\ast(X:I))_{\ast\in\N}$.
\end{enumerate}
\end{def.}

\begin{rem}
 We can make this definition more concise using tensor products over group modules and the $(\Deck(X),\pi_1(X))$-bi-module structure on~$\R[\widetilde X]$. Then $C_n(X)$ is nothing else than $C_n^{\sing}(\widetilde X)\otimes_{\Deck(X)} \R[\widetilde X]$, i.e., given by the composition
\begin{center}
\begin{tikzpicture}
\matrix (m) [scale = 0.5,matrix of math nodes, row sep=2.5em,
column sep=4em, text height=2.5ex, text depth=0.25ex]
{\pi_1(X)& \DeckMod &&\Mod.
\\ };
\path[->]
(m-1-1) edge node[auto] {$\R[\widetilde X]$}(m-1-2)
(m-1-2) edge node[auto] {$\otimes_{\Deck(X)} C_n^{\sing}(\widetilde X)$}  (m-1-4)
;
\end{tikzpicture}
\end{center}
\end{rem}

\begin{rem}
 Let $X$ be a connected topological space and $x\in X$ a point. Then $C_{\ast}(X:\{x\}) = C_{\ast}^{\sing}(\widetilde X;\R)$ as $\pi_1(X,x)$-modules. Thus the definition, and the ones that will follow, generalise the classical situation. 
\end{rem}
\begin{prop}[Functoriality]
 Let $f\colon X\longrightarrow Y$ be a continuous map between two CW-complexes. Let $\widetilde f \colon \widetilde X\longrightarrow \widetilde Y$ be a lift. Then, $\widetilde f$ is $\pi_1(X)$-equivariant, i.e., for all~$\gamma \in \pi_1(X)$ and all $x\in p_X^{-1}(s(\gamma))$ we have
\[
 \widetilde f(\gamma \cdot x) = \pi_1(f)(\gamma)\cdot \widetilde f(x). 
\]

Therefore, the following map is a $\pi_1(X)$-chain map with respect to the induced $\pi_1(X)$-structure on $C_\ast(Y)$.
\begin{align*}
 C_\ast(f)\colon C_\ast(X)&\longrightarrow C_\ast(Y)\\
 (\sigma,x) &\longmapsto (\widetilde f\circ \sigma, \widetilde f(x)).
\end{align*}
The map $C_\ast(f)$ does not depend on the choice of the lift $\widetilde f$. In particular,~$C_\ast$  is functorial in the sense that $C_\ast(g\circ f) = (\pi_1(f)^{\ast}C_\ast(g))\circ C_\ast(f)$ for all continuous maps $f\colon X\longrightarrow Y$ and $g\colon Y\longrightarrow Z$. 
\end{prop}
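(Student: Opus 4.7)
The plan is to handle the theorem's four distinct assertions in sequence: (i) $\pi_1(X)$-equivariance of $\widetilde f$; (ii) well-definedness of $C_\ast(f)$ on the equivalence classes defining $Q_n(X)$; (iii) the $\pi_1(X)$-chain map property; (iv) independence of the chosen lift $\widetilde f$; and finally (v) functoriality. For (i), I would start from $\gamma\in\pi_1(X)$ with $s(\gamma)=p_X(x)$ represented by a path $c\colon [0,1]\to X$, and the unique lift $\widetilde c_x$ starting at $x$, so that by definition $\gamma\cdot x = \widetilde c_x(1)$. Since $\widetilde f$ is a lift, $p_Y\circ\widetilde f = f\circ p_X$, so $\widetilde f\circ\widetilde c_x$ is a continuous path in $\widetilde Y$ starting at $\widetilde f(x)$ whose projection under $p_Y$ is $f\circ c$, a representative of $\pi_1(f)(\gamma)$. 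By uniqueness of path lifts this equals $\widetilde{(f\circ c)}_{\widetilde f(x)}$, and evaluating at $1$ gives the claimed equivariance.

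For (ii), the key input is that for every $\beta\in\Deck(X)$, the composite $\widetilde f\circ\beta$ is again a lift of $f\circ p_X = p_Y\circ\widetilde f$, so by standard covering space theory (applied component-wise on $\widetilde X$) there exists $\beta'\in\Deck(Y)$ with $\widetilde f\circ\beta = \beta'\circ\widetilde f$. Applying this to a representative $(\sigma,x)$ yields $(\widetilde f\circ(\beta\cdot\sigma), \widetilde f(\beta\cdot x)) = (\beta'\cdot(\widetilde f\circ\sigma), \beta'(\widetilde f(x)))$, which is equivalent to $(\widetilde f\circ\sigma,\widetilde f(x))$ in $Q_n(Y)$. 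For (iii), $\pi_1(X)$-equivariance of $C_n(f)$ is immediate from (i), while commutation with the boundary follows because $\partial$ is a signed sum of face maps and post-composition by $\widetilde f$ commutes with face maps.

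For (iv), two lifts $\widetilde f_1,\widetilde f_2$ of $f$ restrict on each connected component of $\widetilde X$ to maps landing in a single component of $\widetilde Y$ (determined by where a chosen basepoint is sent), and by uniqueness of lifts these restrictions differ by a deck transformation $\beta'\in\Deck(Y)$; applying the argument of (ii) shows $(\widetilde f_2\circ\sigma,\widetilde f_2(x))$ and $(\widetilde f_1\circ\sigma,\widetilde f_1(x))$ represent the same element of $Q_n(Y)$. For (v), pick lifts $\widetilde f$ and $\widetilde g$ of $f$ and $g$ respectively; then $\widetilde g\circ\widetilde f$ is a lift of $g\circ f$, and unwinding the definitions on a representative $(\sigma,x)$ yields both sides equal to $(\widetilde g\circ\widetilde f\circ\sigma,\widetilde g(\widetilde f(x)))$.

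The main technical obstacle is the non-connected case: since $X$ need not be path-connected, $\widetilde X$ decomposes as the disjoint union of the universal covers of its components, so one has to verify that the conjugate deck transformation $\beta'$ in step (ii) and the difference deck transformation in step (iv) can be chosen coherently as a single element of $\Deck(Y)$. This is fine because $\widetilde X_x = p_X^{-1}(\text{path component of }p_X(x))$ maps under $\widetilde f$ into a single path component of $\widetilde Y$ (any continuous image of a connected set is connected), so deck transformations can be assembled component by component. Once this verification is in place, all of the remaining assertions reduce to bookkeeping on representatives.
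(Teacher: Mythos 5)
Your proof is correct and follows essentially the same route as the paper's: the path-lifting argument for the $\pi_1(X)$-equivariance of $\widetilde f$, conjugation of deck transformations ($\widetilde f\circ\beta = \beta'\circ\widetilde f$) for well-definedness on $Q_n(X)$ and for independence of the lift, and composition of lifts for functoriality, with the chain-map property inherited from the singular chain complex. Your component-by-component handling of the non-connected case is in fact slightly more careful than the paper's blanket assertion that the lifts of $f$ are exactly $\{\beta\circ\widetilde f\mid\beta\in\Deck(Y)\}$ (which in general only holds on each component of $\widetilde X$ separately), but since each generator $(\sigma,x)$ lives in a single component this does not alter the argument.
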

\begin{proof}
Let $\widetilde \gamma_x$ be a lift of $\gamma$ starting at $x$. Then 
\[
 p_Y\circ \widetilde f\circ \widetilde \gamma_x = f\circ p_X \circ \widetilde \gamma_x = f\circ \gamma = \pi_1(f)(\gamma). 
\]
Hence $\widetilde f\circ \widetilde \gamma_x$ is a lift of $\pi_1(f)(\gamma)$ starting at $\widetilde f(x)$ and therefore
\[
\widetilde f(\gamma\cdot x) = \widetilde f(\widetilde \gamma_x(1)) = (\widetilde f\circ \widetilde \gamma_x)(1) = \pi_1(f)(\gamma)\cdot \widetilde f(x).  
\]
The lifts of $f$ are exactly given by~$\{\beta\circ \widetilde f\mid \beta \in \Deck(Y)\}$.

For all~$\beta\in \Deck(X)$ the map $\widetilde f\circ \beta$ is a lift of $f$, and therefore there exists a $\beta'\in \Deck(Y)$ such that~$\beta'\circ \widetilde f= \widetilde f\circ \beta$, hence for all $(\sigma,x)\in Q_n(X)$

\[
 (\widetilde f\circ(\beta\cdot\sigma), \widetilde f(\beta\cdot x)) =  (\beta'\cdot(\widetilde f\circ \sigma), \beta'\cdot \widetilde f(x))= (\widetilde f\circ \sigma,\widetilde f(x)).
\]
Thus $C_n(f)$ is well-defined. Clearly, for all $\beta\in \Deck(Y)$, the lifts $\widetilde f$ and $\beta\cdot \widetilde f$ induce the same map $C_n(f)$, hence $C_n(f)$ does not depend on the choice of the lift. Therefore, the construction is functorial since the composition of the lifts of two maps is a lift of the composition of the two maps. 
The family $C_{\ast}(f)$ is a chain map since $C^{\sing}_\ast(\widetilde f)$ is a chain map. Furthermore, by definition it is compatible with the induced $\pi_1(X)$-structure on $C_\ast(Y)$. 
\end{proof}

\section{Bounded Cohomology of Topological Spaces}
In this section, we use the $\pi_1(X)$-chain complex $C_{\ast}(X)$  to define bounded cohomology of $X$ with twisted coefficients in a Banach $\pi_1(X)$-module $V$, slightly extending the usual definition of bounded cohomology with twisted coefficients. We will then study the normed cochain complex~$B(C_\ast(X),V')$, show that it is a strong, relatively injec\-tive~$\pi_1(X)$-resolution of $V'$ and derive the absolute mapping theorem for groupoids, i.e., that the bounded cohomology of $X$ is isometrically isomorphic to the bounded cohomology of~$\pi_1(X)$, preparing the ground for our proof of the relative mapping theorem in the next section.  
\subsection{Bounded Cohomology of Topological Spaces}

Similar to the case of bounded cohomology of a groupoid, we begin by defining the domain category for bounded cohomology of a space:
\begin{def.}[Domain categories for bounded cohomology of spaces]
We define a category $\TopBanc$ by setting:
\begin{enumerate}
\item Objects in $\TopBanc$ are pairs $(X,V)$, where $X$ is a topological space and~$V$ is a Banach $\pi_1(X)$-module.
\item A morphism $(X,V)\longrightarrow (Y,W)$ in the category $\TopBanc$ is a pair $(f,\varphi)$, where $f\colon X\longrightarrow Y$ is a continuous map and $\varphi \colon \pi_1(f)^{\ast}W\longrightarrow V$ is a bounded $\pi_1(X)$-map. 
\item We define the composition in $\TopBanc$ as follows: For each pair of morphisms  $(f,\varphi) \colon (X,U)\longrightarrow (Y,V)$, $(g,\psi)\colon (Y,V)\longrightarrow (Z,W)$ set 
\[
 (g,\psi)\circ (f,\varphi) := (g\circ f, \varphi\circ (\pi_1(f)^{\ast}\psi)). 
\]

\end{enumerate}
\end{def.}

\begin{def.}[The Banach Bar Complex with coefficients]
 Let $X$ be a CW-complex and $V$ a Banach $\pi_1(X)$-module. Then:
\begin{enumerate}
\item We write 
\[
 C^{\ast}_b(X;V):= B_{\pi_1(X)}(C_{\ast}(X),V).
\]
Together with $\|\cdot\|_{\infty}$, this is a normed $\R$-cochain complex. 
\item If $(f,\varphi)\colon (X,V)\longrightarrow (Y,W)$ is a morphism in $\TopBanc$, we write $ C^{\ast}_b(f,\varphi)$ for the $\R$-cochain map
\begin{align*}
 C^{\ast}_b(Y;W)&\longrightarrow C^{\ast}_b(X;V)\\
 \alpha &\longmapsto \varphi \circ (\pi_1(f)^{\ast}\alpha) \circ C_{\ast}(f).
\end{align*}
\end{enumerate}
This defines a contravariant functor $C^\ast_b \colon \TopBanc \longrightarrow \RCh^{\|\cdot\|}$.
\end{def.}

\begin{def.}
 Let $X$ be a CW-complex, $V$ a Banach $\pi_1(X)$-module.
 We call the cohomology 
\[
 H^{\ast}_{b}(X;V):= H^{\ast}(C_b^{\ast}(X;V)),
\]
together with the induced semi-norm on $H^{\ast}_{b}(X;V)$, the \emph{bounded cohomology of~$X$ with twisted coefficients in $V$}. 

This defines a contravariant functor $H^\ast_b\colon\TopBanc \longrightarrow \Modgrn$.
\end{def.}
Bounded cohomology with twisted coefficients is normally defined only for connected spaces and for general spaces only with trivial coefficients. We show now that our definition coincides in this cases with the usual one. 
\begin{lemma}\label{l:topconn}
Let $X$ be a connected CW-complex, $x\in X$ and $V$ a Banach~$\pi_1(X)$-module. Then there is a canonical isometric $\R$-cochain isomorphism
\begin{align*}
 S^\ast_{X,V}\colon C_b^{\ast}(X;V) &\longrightarrow C_b^{\ast}(X:\{x\};V_x)\\
 (\varphi_e)_{e\in X} &\longmapsto \varphi_x.
\end{align*}
This is natural in $\TopBanc$.
\end{lemma}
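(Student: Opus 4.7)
The key observation is that $C_\ast(X)_x$ is exactly the normed $\pi_1(X,x)$-chain complex $C_\ast(X:\{x\})$: unwinding the definitions, in both cases the underlying set in degree $n$ is $Q_n(X)_x = \{(\sigma,x)\in Q_n(X)\mid p(x)=x\}$, the norm is the $\ell^1$-norm on this basis, the boundary maps agree, and the action of the vertex group $\pi_1(X,x)\subset \pi_1(X)$ is the restriction of the partial $\pi_1(X)$-action. Therefore the restriction $\varphi\mapsto \varphi_x$ sends a $\pi_1(X)$-equivariant bounded family $(\varphi_e)_{e\in X}$ to a $\pi_1(X,x)$-equivariant bounded map $C_\ast(X:\{x\})\longrightarrow V_x$, and this operation commutes with the coboundary since the latter is induced by $\partial_\ast$ on the chain level.

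Next, I would prove that $S^\ast_{X,V}$ is isometric and bijective, exploiting the connectedness of $X$. For each $e\in X$ choose a morphism $\gamma_e\in\pi_1(X)$ from $x$ to $e$, with $\gamma_x=\id_x$. Given any $\varphi\in C^\ast_b(X;V)$, $\pi_1(X)$-equivariance yields $\varphi_e=\rho^V_{\gamma_e}\circ\varphi_x\circ\rho^{C_\ast(X)}_{\gamma_e^{-1}}$; since both $\rho$'s are isometric, $\|\varphi_e\|_\infty=\|\varphi_x\|_\infty$ for all $e$, so $\|\varphi\|_\infty=\|\varphi_x\|_\infty$ and $S^\ast_{X,V}$ is norm-preserving. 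For surjectivity, given $\psi\in C^\ast_b(X:\{x\};V_x)$ I would define $\varphi_e:=\rho^V_{\gamma_e}\circ\psi\circ\rho^{C_\ast(X)}_{\gamma_e^{-1}}$. Independence of the choice of $\gamma_e$ follows from the $\pi_1(X,x)$-equivariance of $\psi$: if $\gamma_e'$ is another choice, then $\alpha:=\gamma_e^{-1}\gamma_e'\in\pi_1(X,x)$ and equivariance of $\psi$ cancels the extra $\rho_\alpha$ and $\rho_{\alpha^{-1}}$. Using this independence, $\pi_1(X)$-equivariance of the resulting family and the identity $S^\ast_{X,V}(\varphi)=\psi$ are immediate. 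The construction is evidently inverse to $S^\ast_{X,V}$.

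Finally, I would check naturality in $\TopBanc$. For a morphism $(f,\varphi')\colon(X,V)\to(Y,W)$ with $f(x)=:y$, both $C^\ast_b(f,\varphi')$ and $C^\ast_b(f:\{x\},\varphi'_x)$ are defined by postcomposition with $\varphi'$ (respectively $\varphi'_x$) and precomposition with the chain map $C_\ast(f)$ (respectively its $x$-component). Since restriction to the basepoint clearly commutes with both operations, the square
\begin{equation*}
\begin{array}{ccc}
C^\ast_b(Y;W) & \xrightarrow{S^\ast_{Y,W}} & C^\ast_b(Y:\{y\};W_y) \\
\downarrow & & \downarrow \\
C^\ast_b(X;V) & \xrightarrow{S^\ast_{X,V}} & C^\ast_b(X:\{x\};V_x)
\end{array}
\end{equation*}
commutes.

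The only genuinely delicate step is the construction of the inverse: one must verify that the formula $\rho^V_{\gamma_e}\circ\psi\circ\rho^{C_\ast(X)}_{\gamma_e^{-1}}$ is independent of the chosen path $\gamma_e$ and assembles into a $\pi_1(X)$-equivariant family. Everything else is a bookkeeping exercise once the identification $C_\ast(X)_x=C_\ast(X:\{x\})$ is in hand.
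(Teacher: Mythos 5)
Your proposal is correct and follows essentially the same route as the paper: isometry (hence injectivity) from connectedness plus equivariance of the isometric action, an inverse built by choosing a morphism $\gamma_e$ from $x$ to each $e$ and translating (your $\rho^V_{\gamma_e}\circ\psi\circ\rho^{C_\ast(X)}_{\gamma_e^{-1}}$ is exactly the paper's $(\gamma_e\cdot\psi)_{e\in X}$), with independence of the choice deduced from $\pi_1(X,x)$-invariance of $\psi$. The pointwise-coboundary and naturality observations also match the paper's argument.
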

\begin{proof}
 Because $\pi_1(X)$ is connected, the map $e\longmapsto \|\varphi_e\|_{\infty}$ is constant for all~$(\varphi_e)_{e\in X}\in C_b^\ast(X;V)$, hence the map $S_{X,V}^\ast\colon (\varphi_e)_{e\in X}\longmapsto \varphi_x$ is isometric and injective. It is a cochain map, since the coboundary operators are defined pointwise. An inverse is given by choosing for each $y\in X$ a path $\gamma_y\in \Mor_{\pi_1(X)}(x,y)$ from $x$ to $y$ and setting 
\begin{align*}
 R^{\ast}\colon C_b^{\ast}(X:\{x\};V_x) &\longrightarrow C_b^{\ast}(X;V)\\
  \varphi&\longmapsto (\gamma_y\cdot \varphi)_{y\in X}.
\end{align*}
The map $R^{\ast}$ does not depend on the choice of $(\gamma_y)_{y\in X}$ since by the $\pi_1(X,x)$-invariance of $\varphi$, we have for all $\gamma \in \Mor_{\pi_1(X)}(x,y)$ 
\[
 \gamma \cdot \varphi = \gamma_y\cdot (\gamma_y^{-1}\cdot \gamma) \cdot \varphi = \gamma_y\cdot \varphi.
\]
In particular, for all $\varphi\in C_b^{\ast}(X:\{x\};V_x)$ the map $R^{\ast}(\varphi)$ is $\pi_1(X)$-invariant and clearly bounded. 
\end{proof}

Similar to Proposition~\ref{p:grpadditive}, we see that bounded cohomology is additive: 
\begin{prop}[Bounded cohomology and disjoint unions of spaces]\label{p:topadditive}
 Let~$X$ be a CW-complex and  $X=\amalg_{\lambda\in \Lambda} X^{\lambda}$ be the partition in connected components. Let $V$ be a Banach $\pi_1(X)$-module. For $\lambda\in\Lambda$, write \text{$V^\lambda$ for} the induced $\pi_1(X^\lambda)$-module structure on $V$. The family~$(\pi_1(X^{\lambda})\longhookrightarrow \pi_1(X))_{\lambda\in \Lambda}$ of injective groupoid maps induces then an isometric isomorphism
\begin{align*}
 H^{\ast}_b(X;V) &\longrightarrow \prod_{\lambda\in\Lambda}^{\|\cdot\|} H^{\ast}_b(X^{\lambda};V^{\lambda})
\intertext{with respect to the product semi-norm, see Definition~\ref{d:prodnorm}. Composing with the isometric isomorphisms given by  Lemma~\ref{l:topconn}, we get an isometric isomorphism}
 H^{\ast}_b(X;V) &\longrightarrow \prod_{\lambda\in\Lambda}^{\|\cdot\|} H^{\ast}_b(X^{\lambda}:\{x(\lambda)\};V_{x(\lambda)})
\end{align*}
for any choice of points $x(\lambda)\in X^\lambda$.
\end{prop}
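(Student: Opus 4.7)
The plan is to transfer the argument of Proposition~\ref{p:grpadditive} from the groupoid side to the topological side via the fundamental groupoid.

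First I would observe that the disjoint-union decomposition lifts: because paths cannot connect distinct path components, $\pi_1(X)=\amalg_{\lambda\in\Lambda}\pi_1(X^\lambda)$ as groupoids, and each connected component $\widetilde X_x$ of the universal cover of $X$ lies entirely over the component $X^{\lambda(p(x))}$ containing $p(x)$. Consequently, for each $e\in X^\lambda$ we have the tautological identification $Q_n(X)_e = Q_n(X^\lambda)_e$, the partial $\pi_1(X)$-action respects this decomposition, and the normed $\pi_1(X)$-chain complex $C_\ast(X)$ splits as the disjoint union $\amalg_\lambda C_\ast(X^\lambda)$, where each $C_\ast(X^\lambda)$ is viewed as a $\pi_1(X)$-module supported on $X^\lambda$.

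Second, applying the invariants functor $B_{\pi_1(X)}(\,\cdot\,,V)$ turns this disjoint union into a normed product: a $\pi_1(X)$-equivariant family $(\varphi_e)_{e\in X}$ of bounded maps is uniquely determined by its restrictions $(\varphi_e)_{e\in X^\lambda}$ to each path component, equivariance is componentwise, and $\sup_{e\in X}\|\varphi_e\|_\infty = \sup_{\lambda}\sup_{e\in X^\lambda}\|\varphi_e\|_\infty$ is precisely the product norm. This yields an isometric isomorphism of normed $\R$-cochain complexes
\[
C_b^\ast(X;V)\;\longrightarrow\;\prod^{\|\cdot\|}_{\lambda\in\Lambda} C_b^\ast(X^\lambda;V^\lambda),
\]
induced by the family of inclusions of the $\pi_1(X^\lambda)$ into $\pi_1(X)$.

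Passing to cohomology gives the first claimed isomorphism, with the induced semi-norm matching the product semi-norm exactly as in Proposition~\ref{p:grpadditive}: cocycles and coboundaries respect the componentwise split, and approximating each class $[z_\lambda]$ by a representative of norm at most $\|[z_\lambda]\|+\varepsilon$ and reassembling gives the required reverse inequality, using that $\sup_\lambda \|[z_\lambda]\|<\infty$ on the product side. For the second stated isomorphism I would then compose componentwise with the isometric isomorphisms of Lemma~\ref{l:topconn}, identifying $H_b^\ast(X^\lambda;V^\lambda)$ with $H_b^\ast(X^\lambda:\{x(\lambda)\};V_{x(\lambda)})$ for any choice of basepoints $x(\lambda)\in X^\lambda$. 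The only genuine content is the first step, the disjoint-union decomposition of $C_\ast(X)$; everything else is the same accounting with product semi-norms used in the groupoid case, so the main obstacle (if any) is essentially notational bookkeeping.
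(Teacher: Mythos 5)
Your argument is essentially identical to the paper's own (one-sentence) proof: both rest on the observation that the component decomposition of $\widetilde X$, $S_n(\widetilde X)$ and hence $C_\ast(X)$ is compatible with the boundary maps and the $\pi_1(X)$-action, is turned into the normed product by $B_{\pi_1(X)}(\,\cdot\,,V)$, and induces the product semi-norm, after which one composes with Lemma~\ref{l:topconn}. You simply spell out the cochain-level identification and the $\varepsilon$-approximation for the semi-norm in more detail than the paper does; no changes needed.
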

\begin{proof}
The partition of $X$ into connected components induces a decomposition of~$S_n(\widetilde X)$, $\widetilde X$ and $C_\ast(X)$, which is compatible with the boundary operators and the $\pi_1(X)$-action and preserved under applying $B_{\pi_1(X)}(\;\cdot\;, V)$. The induced semi-norm is exactly the product semi-norm.  
\end{proof}

\begin{prop}\label{p:topcalculatesbc}
Let $X$ be a CW-complex. The following map is an isometric $\R$-cochain isomorphism, natural in $X$
\begin{align*}
R^\ast\colon B(C_\ast^{\sing}(X;\R),\R) &\longrightarrow C_b^{\ast}(X;\R[X])\\
\varphi &\longmapsto \Bigl((\sigma,x)\longmapsto \varphi(p\circ \sigma)\Bigr).
\end{align*} 
\end{prop}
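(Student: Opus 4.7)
First I would verify that $R^\ast$ is well-defined and lands in the asserted target. If $(\beta\cdot\sigma,\beta\cdot x)$ represents the same class as $(\sigma,x)$ in $Q_n(X)$, then $p\circ(\beta\cdot\sigma)=p\circ\sigma$, so the formula $(\sigma,x)\mapsto\varphi(p\circ\sigma)$ descends to $Q_n(X)$. The resulting family $(R^\ast(\varphi)_e)_{e\in X}$ is $\pi_1(X)$-equivariant because the action on $C_n(X)$ only affects the second coordinate, $\R[X]$ carries the trivial action, and the value $\varphi(p\circ\sigma)$ does not depend on $x$. Since $|\varphi(p\circ\sigma)|\leq\|\varphi\|_\infty$, the family is uniformly bounded. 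The cochain property is immediate from $p\circ\partial_{i,n}\sigma=\partial_{i,n}(p\circ\sigma)$ and the definition of $\partial_n$ on $C_n(X)$.

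Next I would establish the isometry. Since $C_n(X)_e$ carries the $\ell^1$-norm with basis $Q_n(X)_e$ and the target fiber $\R[X]_e=\R$, the operator norm of $R^\ast(\varphi)_e$ equals $\sup_{(\sigma,x)\in Q_n(X)_e}|\varphi(p\circ\sigma)|$; taking the sup over $e\in X$ gives
\[
\|R^\ast(\varphi)\|_\infty=\sup_{(\sigma,x)\in Q_n(X)}|\varphi(p\circ\sigma)|=\sup_{\tau\in S_n(X)}|\varphi(\tau)|=\|\varphi\|_\infty,
\]
where the middle equality uses that every $\tau\in S_n(X)$ admits a lift to $\widetilde X$ by the covering homotopy property, so the map $(\sigma,x)\mapsto p\circ\sigma$ is surjective onto $S_n(X)$.

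The crucial step is constructing the inverse. Given $f\in C^\ast_b(X;\R[X])$ and $\tau\in S_n(X)$, pick any lift $\widetilde\tau\in S_n(\widetilde X)$ and any $\widetilde x\in\operatorname{im}\widetilde\tau$, and set $S^\ast(f)(\tau):=f_{p(\widetilde x)}(\widetilde\tau,\widetilde x)$. I expect this to be the main subtlety, so let me sketch it carefully. Independence of the choice of $\widetilde x$ follows from the partial action: any two points $\widetilde x,\widetilde y\in\operatorname{im}\widetilde\tau$ are joined by a path contained in $\operatorname{im}\widetilde\tau$, whose image under $p$ represents some $\gamma\in\pi_1(X)$ with $\gamma\cdot\widetilde x=\widetilde y$, so $\rho_\gamma(\widetilde\tau,\widetilde x)=(\widetilde\tau,\widetilde y)$ and $\pi_1(X)$-equivariance of $f$ together with the trivial action on $\R[X]$ gives $f_{p(\widetilde y)}(\widetilde\tau,\widetilde y)=f_{p(\widetilde x)}(\widetilde\tau,\widetilde x)$. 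Independence of the lift $\widetilde\tau$ follows because any two lifts differ by some $\beta\in\Deck(X)$, and the relation defining $Q_n(X)$ identifies $(\widetilde\tau,\widetilde x)$ with $(\beta\cdot\widetilde\tau,\beta\cdot\widetilde x)$. The identities $R^\ast\circ S^\ast=\id$ and $S^\ast\circ R^\ast=\id$ are then immediate by choosing, on the one hand, $\widetilde\tau=\sigma$ and $\widetilde x=x$, and on the other hand, inserting the definition.

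Finally, for naturality in $\TopBanc$, any continuous $f\colon X\to Y$ produces a canonical morphism $(f,\id)\colon(X,\R[X])\to(Y,\R[Y])$ (since $\pi_1(f)^\ast\R[Y]$ is again the trivial $\pi_1(X)$-module, identified with $\R[X]$ fiberwise). Comparing $R^\ast_X\bigl(B(C^{\sing}_\ast(f),\R)(\varphi)\bigr)(\sigma,x)=\varphi(f\circ p_X\circ\sigma)$ with $C^\ast_b(f,\id)\bigl(R^\ast_Y(\varphi)\bigr)(\sigma,x)=R^\ast_Y(\varphi)_{f(p_X(x))}(\widetilde f\circ\sigma,\widetilde f(x))=\varphi(p_Y\circ\widetilde f\circ\sigma)$ and using $p_Y\circ\widetilde f=f\circ p_X$ gives equality, proving naturality.
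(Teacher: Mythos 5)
Your proposal is correct and follows essentially the same route as the paper: check well-definedness, $\pi_1(X)$-invariance and isometry of $R^\ast$ directly, then invert it by lifting a singular simplex of $X$ to $\widetilde X$ and evaluating (the paper's inverse uses the specific basepoint $\widetilde\sigma_0$, yours an arbitrary point of the image, which is an immaterial difference). One small imprecision: for $R^\ast\circ S^\ast=\id$ you say it is immediate ``by choosing $\widetilde\tau=\sigma$ and $\widetilde x=x$,'' but an element $(\sigma,x)\in Q_n(X)$ only requires $\im\sigma\subset\widetilde X_x$, so $x$ need not lie in $\im\sigma$ and this choice is not always available; the fix is exactly your basepoint-independence argument, applied with a path in the component $\widetilde X_x$ from $x$ to a point of $\im\sigma$ rather than a path inside $\im\widetilde\tau$, so the value $f_{p(x)}(\sigma,x)$ is constant in $x$ over the whole component and the identity follows.
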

\begin{proof}
 Clearly, for all $n\in\N$ and all $\varphi\in B(C_n^{\sing}(X;\R),\R)$, the map $R^n(\varphi)$ is well-defined, $\pi_1(X)$-invariant and 
\[
 \|R^n(\varphi)\|_{\infty} = \sup_{(\sigma,x)\in Q_n(X)} |\varphi(p\circ \sigma)|= \sup_{\sigma'\in S_n(X)} |\varphi(\sigma')| = \|\varphi\|_{\infty}.
\]
 Thus, $R^n(\varphi)$ is bounded and $R^n$ an isometry. The family $R^\ast$ is also clearly an $\R$-cochain map. An inverse to $R^\ast$ is given by
\begin{align*}
 S^\ast\colon C_b^{\ast}(X; \R[X]) &\longrightarrow B(C_{\ast}^{\sing}(X;\R),\R)\\
\psi &\longmapsto \bigl(\sigma \longmapsto \psi_{\sigma_0}(\widetilde \sigma,\widetilde \sigma_0)\bigr),
\end{align*}
where $\widetilde \sigma$ denotes a lift of $\sigma$. A short calculation shows, that this is indeed a well-defined inverse to $R^\ast$.
\end{proof}

\subsection{The Absolute Mapping Theorem}
 In this section, we will show that there is an isometric isomorphism between the bounded cohomology of a CW-complex and the bounded cohomology of its fundamental groupoid. This will be done by translating Ivanov's proof~\cite{I} of the theorem for groups into the groupoid setting. This is done mainly as a preparation for the proof of the relative mapping theorem. 

Let $X$ be a CW-complex and $V$ a Banach $\pi_1(X)$-module. We will study the Banach $\pi_1(X)$-cochain complex $B(C_{\ast}(X),V)$ together with the canonical~$\pi_1(X)$-augmentation map
\begin{align*}
 \nu \colon V&\longrightarrow B(C_0(X),V)\\
 v &\longmapsto ((\sigma,x)\longmapsto v).
\end{align*}

We will show that $(B(C_{\ast}(X),V),\nu)$ is a strong, relatively injective resolution of~$V$, and then deduce the mapping theorem from the fundamental lemma. In order to prove that this is a strong resolution, we will first discuss how to translate cochain contractions from the group setting into to groupoids.

Recall that for any $\R$-module $V$ and any $x\in \widetilde X$, there is a canonical augmentation map
\begin{align*}
\nu_x\colon  V &\longrightarrow  B(C_{0}^{\sing}(\widetilde X_x;\R),V)\\
v&\longmapsto \bigl(\sigma\longmapsto v\bigr).
\end{align*}
Now, we define pointed equivariant families of cochain contractions in the bounded setting:
\begin{def.} Let $X$ be a CW-complex. Let $(V_x)_{x\in X}$ be a family of Banach $\R$-modules. We call a family of cochain contractions 
 \begin{align*}
  \begin{pmatrix}\bigr(s_x^{\ast} \colon B(C_{\ast}^{\text{sing}}(\widetilde X_x;\R),V_{p(x)})\longrightarrow B(C^{\text{sing}}_{\ast-1}(\widetilde X_x;\R),V_{p(x)})\bigl)_{\ast \in \N_{>0}}\\ s^0_x\colon B(C_{0}^{\sing}(\widetilde X_x;\R),V_{p(x)})\longrightarrow V_{p(x)} \end{pmatrix}_{x\in\widetilde X}
 \end{align*}
of the family of augmented cochain complexes $(B(C_{\ast}^{\text{sing}}(\widetilde X_x;\R),V_{p(x)}),\nu_x)_{x\in \widetilde X}$ \emph{pointed equivariant over $(X,V)$} if 
\begin{enumerate}
\item The family is $\Deck(X)$-equivariant, i.e., for all $\beta\in\Deck(X)$, all $x\in \widetilde X$ and all $\varphi \in B(C_\ast^{\text{sing}}(\widetilde X_x),V_{p(x)})$ 
\[
 s^\ast_{\beta\cdot x}(\beta{\ast}\varphi) = \beta\ast s^\ast_x(\varphi). 
\]
Here, we write $\ast$ to denote the action of $\Deck(X)$ on the cochain complex~$B(C_{\ast}^{\text{sing}}(\widetilde X_x;\R),V_{p(x)})$ given by endowing the module $V_{p(x)}$ with the trivial $\Deck(X)$-action. 
\item They are pointed, i.e., for all $x\in\widetilde X$ and all $\varphi \in B(C_0^{\text{sing}}(\widetilde X_x),V_{p(x)})$
\[
 s_x^{0}(\varphi) =\varphi(x).
\]
\end{enumerate}
\end{def.} 

\begin{rem}\label{r:pointed equivariant}
 Of course, given a family of pointed cochain contractions, one can always find a pointed equivariant family by choosing one contraction for a point in each fibre and defining the other contractions by translation with deck transformations, i.e.: If $(s^\ast_x)_{x\in \widetilde X}$ is a family of pointed cochain contractions, choose a lift $\widetilde {x_0}$ for each $x_0\in X$ and set for each $\beta\in \Deck(X)$ and each~$\varphi\in B(C_\ast^{\text{sing}}(\widetilde X_x),V_{p(x)})$ 
 \[t_{\beta\cdot \widetilde  x_0}^{\ast}(\varphi) = \beta \ast s_{\widetilde x_0}^{\ast}(\beta^{-1}\ast\varphi).\]
Then $(t_x^{\ast})_{x\in \widetilde X}$ is a pointed equivariant family of cochain contractions. 
\end{rem}
\begin{exa}
 Assume $X$ to be aspherical (but not necessarily connected), i.e., assume that the higher homotopy groups of all connected components of~$X$ vanish. Then the space $\widetilde X_x$ is contractible for each $x\in \widetilde X$ and a pointed chain contraction is given by coning with respect to~$x$. Hence by the remark there exists a pointed equivariant family of chain contractions over~$X$.
\end{exa}

\begin{lemma}
 Let $X$ be a CW-complex. For each $x\in \widetilde X$, the map 
\begin{align*}
i_x\colon C_\ast^{\sing}(\widetilde{X}_x) &\longrightarrow C_\ast(X)_{p(x)}\\
 \sigma &\longmapsto (\sigma,x)
\end{align*}
is an isometric chain isomorphism with respect to the restriction of the boundary map to  $C_\ast(X)_{p(x)}$.
\end{lemma}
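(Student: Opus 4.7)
The plan is to analyse $i_x$ on the basis level and to reduce the statement to the observation that, by choice of base point $x$, the equivalence relation defining $Q_n(X)_{p(x)}$ has canonical representatives.

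First I would unpack the definitions. Elements of $C_n(X)_{p(x)}$ are finite $\R$-linear combinations of classes $[(\sigma,y)]$ with $p(y)=p(x)$ and $\im \sigma \subset \widetilde X_y$, modulo the relation $(\beta\cdot\sigma,\beta\cdot y)\sim(\sigma,y)$ for $\beta\in\Deck(X)$. Elements of $C_n^{\sing}(\widetilde X_x)$ are finite $\R$-linear combinations of singular simplices $\sigma\colon\Delta^n\to\widetilde X_x$. Thus both modules are equipped with the $\ell^1$-norm coming from a basis, and $i_x$ sends basis elements to basis elements. So once I show that $i_x$ restricts to a bijection between $S_n(\widetilde X_x)$ and $Q_n(X)_{p(x)}$, it will automatically be an isometric $\R$-isomorphism in each degree.

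Next I would verify bijectivity on the level of bases. For \emph{injectivity}, suppose $i_x(\sigma_1)=i_x(\sigma_2)$, i.e.\ $(\sigma_1,x)\sim(\sigma_2,x)$. Then there is a $\beta\in\Deck(X)$ with $\beta\cdot x=x$ and $\beta\cdot\sigma_1=\sigma_2$. Since the deck transformation group of the universal cover acts freely on $\widetilde X$, the equality $\beta\cdot x=x$ forces $\beta=\id$, and hence $\sigma_1=\sigma_2$. For \emph{surjectivity}, let $[(\sigma,y)]\in Q_n(X)_{p(x)}$, so $p(y)=p(x)$. Because $\Deck(X)$ acts transitively on each fibre of $p$ (acting within the components of $\widetilde X$ covering a given component of $X$), one can pick $\beta\in\Deck(X)$ with $\beta\cdot y=x$; then $\beta\cdot\sigma$ has image in $\widetilde X_x$ and $(\sigma,y)\sim(\beta\cdot\sigma,x)=i_x(\beta\cdot\sigma)$.

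Finally, I would check that $i_x$ is a chain map: by definition of $\partial_n$ on $C_\ast(X)$,
\[
 \partial_n\bigl(i_x(\sigma)\bigr)=\partial_n(\sigma,x)=\sum_{j=0}^n(-1)^j(\partial_{j,n}\sigma,x)=i_x\!\left(\sum_{j=0}^n(-1)^j\partial_{j,n}\sigma\right)=i_x(\partial_n\sigma),
\]
which only uses that the face maps of $\widetilde X_x$ agree with those of $\widetilde X$ and that $\im(\partial_{j,n}\sigma)\subset\widetilde X_x$ automatically. The only conceptual step is the surjectivity argument; everything else is bookkeeping. Writing it this way, the main point to get right is the transitivity/freeness of the $\Deck(X)$-action on fibres, so I would either invoke this as a standard property of universal covers or prove it component-by-component, using that $\widetilde X$ is the disjoint union of universal covers of the connected components of $X$.
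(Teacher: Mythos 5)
Your argument is correct and is essentially the paper's proof: the paper records the chain-map property by the same observation that the boundary acts only on the first coordinate, and defines the inverse directly as $(\sigma,y)\longmapsto\beta_y(\sigma)$ with $\beta_y$ the unique deck transformation of the relevant component sending $y$ to $x$, which is exactly your transitivity (existence) plus freeness (uniqueness) packaged together. One cosmetic point: when $X$ is disconnected, $\beta\cdot x=x$ only forces $\beta$ to restrict to the identity on $\widetilde X_x$ rather than $\beta=\id$ globally, but since $\im\sigma_1\subset\widetilde X_x$ that is all your injectivity step needs, as your closing remark about arguing component-by-component already acknowledges.
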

\begin{proof}
 It is a chain map since the boundary operators act only on the first argument. An inverse is given by the map $(\sigma,y) \longmapsto \beta_y(\sigma)$, where $\beta_y\in\Deck(X_{p(x)})$ is the unique element such that $\beta_y \cdot y = x$. 
\end{proof}

\begin{prop}\label{p:suitabletococontraction}
 Let $X$ be a CW-complex and let $V$ be a Banach $\pi_1(X)$-module. Let $(s_x^\ast)_{x\in\widetilde X}$ be a pointed equivariant family of cochain contractions over~$(X,V)$.  Then 
\begin{align*}
 \fa{n\in \N_{>0}} s^n\colon B(C_n(X),V) &\longrightarrow B(C_{n-1}(X),V)\\
 \varphi &\longmapsto \Bigl( (\sigma,x)\longmapsto s_x^n(\varphi\circ i_x)(\sigma)\Bigr)\\
  s^0\colon B(C_0(X),V) &\longrightarrow V\\
 B(C_0(X),V)_e\ni\varphi &\longmapsto \varphi(\widetilde{e},\widetilde{e})\
\end{align*}
defines a cochain contraction of $(B(C_\ast(X),V),\nu)$. If the $(s^\ast _x)_{x\in\widetilde X}$ are strong, so is $s^\ast$.
\end{prop}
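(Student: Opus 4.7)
The plan is to reduce every required property of $s^\ast$ to the corresponding fiberwise property of the given pointed equivariant family $(s_x^\ast)_{x\in\widetilde X}$, using the isometric chain isomorphism $i_x\colon C_\ast^{\sing}(\widetilde X_x)\longrightarrow C_\ast(X)_{p(x)}$ supplied by the preceding lemma. Precomposition with $i_x$ sends a cochain $\varphi\in B(C_\ast(X),V)_{p(x)}$ to an element of $B(C_\ast^{\sing}(\widetilde X_x),V_{p(x)})$; because $i_x$ is a chain map that intertwines $\nu$ with the singular augmentation $\nu_x$, this precomposition also intertwines the relevant coboundaries and augmentations. The defining formula $s^n\varphi(\sigma,x)=s_x^n(\varphi\circ i_x)(\sigma)$ then simply says: apply $s_x^n$ after this identification.

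First I would check that $s^n\varphi$ is well-defined on the quotient $Q_n(X)$. For $\beta\in\Deck(X)$, the invariance of $\varphi$ built into $B(C_n(X),V)$ translates into the identity $\varphi\circ i_{\beta x}=\beta\ast(\varphi\circ i_x)$, where $\ast$ denotes the action of $\Deck(X)$ on $B(C_\ast^{\sing}(\widetilde X),V_{p(x)})$ with trivial action on the target. The pointed-equivariance hypothesis then yields $s_{\beta x}^n(\varphi\circ i_{\beta x})=\beta\ast s_x^n(\varphi\circ i_x)$, and evaluating at $\beta\sigma$ gives $s_x^n(\varphi\circ i_x)(\sigma)$, as required. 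Well-definedness of $s^0$ reduces to the observation that $(\widetilde e,\widetilde e)\sim(\beta\widetilde e,\beta\widetilde e)$ in $Q_0(X)_e$.

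For the contraction identity in degree $n\geq 1$, on a representative $(\sigma,x)$,
\[
(\delta^{n-1}s^n\varphi+s^{n+1}\delta^n\varphi)(\sigma,x)=\bigl(\delta_{\sing,x}^{n-1}s_x^n+s_x^{n+1}\delta_{\sing,x}^n\bigr)(\varphi\circ i_x)(\sigma)=\varphi(\sigma,x)
\]
by the fiberwise cochain contraction property, where $\delta_{\sing,x}^\ast$ denotes the coboundary on $B(C_\ast^{\sing}(\widetilde X_x),V_{p(x)})$. In degree zero, the \emph{pointed} hypothesis $s_x^0(\psi)=\psi(x)$ together with well-definedness gives $s^0\varphi=\varphi(\widetilde e,\widetilde e)=(\varphi\circ i_x)(x)=s_x^0(\varphi\circ i_x)$ for any $x$ above $e$, so the fiberwise identity $\nu_x s_x^0+s_x^1\delta_{\sing,x}^0=\id$ transports to the global identity $\nu s^0+s^1\delta^0=\id$; the identity $s^0\nu=\id_V$ is immediate from $\nu(v)$ being constant.

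Finally, $\|\varphi\circ i_x\|_\infty\leq\|\varphi\|_\infty$ because $i_x$ embeds an $\ell^1$-basis into an $\ell^1$-basis, so $\|s^n\|_\infty\leq\sup_x\|s_x^n\|_\infty$, which in particular gives the strong statement when the $s_x^\ast$ are norm non-increasing. The only delicate step is the well-definedness: the entire construction hinges on correctly matching the equivalence relation defining $Q_n(X)$ with the $\Deck(X)$-equivariance built into the pointed family, and once that dictionary is set up the remaining verifications are routine fiberwise bookkeeping.
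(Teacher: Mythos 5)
Your proposal is correct and follows essentially the same route as the paper: well-definedness on $Q_n(X)$ via the $\Deck(X)$-equivariance of the family, the contraction identity transported through the chain isomorphism $i_x$ with the pointedness used exactly in degree $0$, and the same norm estimate $\|s^n\varphi\|_\infty\leq\sup_x\|s^n_x(\varphi\circ i_x)\|_\infty\leq\|\varphi\|_\infty$ for the strong case. Your explicit dictionary $\varphi\circ i_{\beta x}=\beta\ast(\varphi\circ i_x)$ just spells out the step the paper summarises as ``well-defined by the equivariance of the family.''
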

\begin{proof}
For each $\varphi \in B(C_n(X),V)$, the map $s^n(\varphi)$ is well-defined by the equivariance of the family $(s^n_x)_{x\in \widetilde X}$.
The family $(s^n)_{n\in\N}$ is a cochain contraction since $(s_x^{\ast})_{x\in \widetilde X}$ is a family of cochain contractions and $i_x$ is a chain map. In degree 0, we also use that $(s_x^{\ast})_{x\in \widetilde X}$ is pointed: For all $\varphi \in B(C_0(X),V)$  and all $(\sigma,x)\in Q_0(X)$ we have
\begin{align*}
 (s^1\circ\delta^0+ \nu\circ s^0) (\varphi)(\sigma,x)
&= s_x^1( \delta^0 (\varphi\circ i_x))(\sigma) + \varphi(x,x)\\
&= s_x^1(\delta^0 (\varphi\circ i_x))(\sigma) + s^0_x(\varphi\circ i_x)\\
&= (s_x^1\circ \delta^0  + \nu_x\circ s^0_x)(\varphi\circ i_x)(\sigma)\\
&=\varphi(\sigma,x).
\end{align*}
If the $(s^\ast_{x})_{x\in \widetilde X}$ are strong, we note that for all $\varphi \in B(C_n(X),V)$
\begin{align*}
 \|s^n \varphi\|_\infty &= \sup_{(\sigma,x)\in Q_n(X)} |s^n\varphi(\sigma,x) |
= \sup_{(\sigma,x)\in Q_n(X)} |s^n_x(\varphi\circ i_x)(\sigma) |\\
&\leq \sup_{x\in \widetilde X}\|s^n_x(\varphi\circ i_x)\|_{\infty}
\leq  \sup_{x\in \widetilde X}\|(\varphi\circ i_x)\|_{\infty}
\leq \|\varphi\|_\infty. \qedhere
\end{align*}
\end{proof}

\begin{thm}[{\cite{I,Bue11,Loe07}}]\label{t:ivaloeh}
 Let $X$ be a connected CW-complex and $V$ a Banach $\pi_1(X,y)$-module. Then for each $x\in \widetilde X$ there is a strong pointed cochain contraction 
\[
 (s_x^{\ast}\colon B(C^{\sing}_{\ast}(\widetilde X;\R),V')\longrightarrow B(C^{\sing}_{\ast-1}(\widetilde X;\R),V'))_{\ast\in \N}
\]
Here, $V'$ denotes the topological dual of $V$. 
\end{thm}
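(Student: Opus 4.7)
This is Ivanov's theorem together with its refinement in the work of Bühler and Löh; the paper cites these references precisely so that it can invoke the result black-box at this point. Nevertheless, a plausible plan for the proof proceeds as follows.

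The strategy is to construct the contraction first in a crude form (bounded but not norm non-increasing) via a coning construction on $\widetilde X$, and then to pass to a norm non-increasing version by a weak-$\ast$ averaging that exploits the dual-coefficient hypothesis. The simple-connectedness of $\widetilde X$ will be what makes the coning canonical enough (up to homotopy) for the construction to be well-behaved on the cochain level.

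Concretely, fix the basepoint $x \in \widetilde X$ and pick, for each $y \in \widetilde X$, a path $\gamma_{xy}$ from $x$ to $y$ (e.g.\ by choosing paths cellularly). For a singular $n$-simplex $\sigma \colon \Delta^n \to \widetilde X$ with $\sigma(e_0) = y$, build a singular $(n+1)$-simplex $c_x(\sigma)$ out of $\gamma_{xy}$ and $\sigma$ via the standard prism / join construction. Extending linearly gives a linear partial chain contraction $t_n^x \colon C_n^{\sing}(\widetilde X) \to C_{n+1}^{\sing}(\widetilde X)$ whose defining property $\partial \circ t_n^x + t_{n-1}^x \circ \partial = \id - \pi_x$ holds on the nose (using simple connectedness to ensure independence of the path choice up to boundaries), and where $\pi_x$ is the projection onto the $0$-simplex $x$. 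Dualizing, we obtain a bounded cochain contraction
\[
  s_x^n \colon B(C_n^{\sing}(\widetilde X), V') \longrightarrow B(C_{n-1}^{\sing}(\widetilde X), V'),\qquad s_x^n(\varphi) = \varphi \circ t_{n-1}^x,
\]
with the pointedness condition $s_x^0(\varphi) = \varphi(x)$ holding by direct inspection.

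The main obstacle is Step four: the $s_x^n$ constructed above are bounded (with operator norm bounded by the combinatorics of the prism), but not norm non-increasing. To upgrade to a strong contraction, I would average the family of contractions coming from different systems of path choices by passing to a weak-$\ast$ cluster point. Since $V'$ is a dual Banach space, the unit ball of $V'$ is weak-$\ast$ compact (Banach--Alaoglu), so for any net of operators of uniformly bounded norm, evaluation on a fixed element of $C_n^{\sing}(\widetilde X)$ produces a net in $V'$ with a weak-$\ast$ convergent subnet. By a Markov--Kakutani / Tychonoff-style argument one can thus extract a pointwise weak-$\ast$ limit which is norm non-increasing (since the averages stay in the closed unit ball of the operator space), is still a cochain contraction (since the cochain identity is preserved under pointwise weak-$\ast$ limits), and remains pointed in degree zero. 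The resulting family $(s_x^\ast)_{\ast \in \N}$ is the desired strong pointed cochain contraction. Alternatively, one can avoid the averaging entirely by invoking Ivanov's original direct construction, as worked out in detail in \cite{I,Bue11,Loe07}; this is what the citation is meant to encapsulate.
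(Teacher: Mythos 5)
Your closing fallback---invoking Ivanov, B\"uhler and L\"oh as a black box---is exactly what the paper does (its ``proof'' is only a sketch pointing to \cite[Theorem 2.4]{I}, \cite[Lemma B2]{Loe07} and \cite{Bue11}), but the construction you actually sketch contains a genuine gap, and the gap sits precisely at the deep point of the theorem. Your coning/prism construction would produce a chain contraction $t^x_\ast$ of $C^{\sing}_\ast(\widetilde X;\R)$ satisfying $\partial\circ t^x_n+t^x_{n-1}\circ\partial=\id-\pi_x$; such a contraction can exist only if $\widetilde X$ is acyclic, hence (being a simply connected CW-complex) contractible. But $\widetilde X$ is merely the universal cover of an arbitrary connected CW-complex and is in general not contractible: for $X=S^2$ one has $\widetilde X=S^2$ and $H_2(\widetilde X)\neq 0$, so no chain contraction of $C^{\sing}_\ast(\widetilde X;\R)$ exists at all. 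Simple connectedness makes your path choices canonical up to boundaries in degree one, but in higher degrees the join/prism construction requires filling spheres, which needs vanishing of the higher homotopy (equivalently, homology) of $\widetilde X$. Consequently there is nothing on the chain level to dualize; the whole content of the theorem is that the \emph{bounded} cochain complex $B(C^{\sing}_\ast(\widetilde X;\R),V')$ is nevertheless acyclic with a norm non-increasing, pointed contraction, and this cannot be obtained as the dual of a chain-level contraction.

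The averaging step also cannot repair the norm: weak-$\ast$ limits and convex combinations of operators whose norms are only bounded by the prism combinatorics (roughly $n+1$ in degree $n$) again have norm at most $n+1$, not at most $1$, so Banach--Alaoglu cannot upgrade a merely bounded contraction to a strong one. Where dual coefficients and averaging genuinely enter is in Ivanov's actual argument: one builds over $\widetilde X$ a tower of principal bundles with abelian (hence amenable) structure groups of Eilenberg--MacLane type and defines the contraction directly on the bounded cochain level by averaging cochains over the fibres; L\"oh's contribution is to carry this out for twisted coefficients in a dual Banach space, where weak-$\ast$ compactness makes the fibrewise averages well defined, B\"uhler removes the countability hypothesis, and pointedness in degree zero is arranged within that construction. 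So either cite the result as the paper does, or reproduce the tower-and-averaging construction; a coning argument cannot work here.
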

\begin{proof}[Sketch of proof]
 The main step in Ivanov's proof of the absolute mapping theorem in the group setting is the construction of (partial) strong $\R$-cochain contractions for the cochain complex $B(C_\ast^{\sing}(\widetilde X;\R),\R)$ for $X$ a countable CW-complex~\cite[Theorem 2.4]{I}. This was extended by L\"oh to cochain complexes with twisted coefficients~\cite[Lemma B2]{Loe07} in a dual space and in each case, the cochain contractions can be chosen to be pointed. As noted by B\"uhler, the assumption that $X$ is countable is actually not necessary~\cite{Bue11}. 
\end{proof}

We will use the following simple observation to translate between the fundamental groupoid and the universal cover:
\begin{rem}\label{r:paths}
 Let $x,y\in \widetilde X$ be two points in the same connected component of $\widetilde X$. We write $\gamma_{x,y}$ for the unique element in $\Mor_{\pi_1(X)}(p(x),p(y))$, such that~$\gamma_{x,y}\cdot x = y$. Geometrically, this is given by the projection to $X$ of any path in~$\widetilde X$ from $x$ to $y$. 
By definition, we have for all $\beta\in \Deck(X)$, all~$g\in\pi_1(X)$ with $s(g)= p(y)$ and all $h\in \pi_1(X)$ with $t(h) = p(x)$
\begin{align*}
 \gamma_{\beta\cdot x,\beta\cdot y} &= \gamma_{x,y}\\
g\cdot \gamma_{x,y} &= \gamma_{ x,g\cdot y}\\
\gamma_{x,y}\cdot h &= \gamma_{h^{-1}\cdot x, y}.
\end{align*}

\end{rem}

\begin{prop}\label{p:toprelinj}
Let $X$ be a CW-complex and let $V$ be a Banach $\pi_1(X)$-module. Then for all $n\in \N$, the Banach $\pi_1(X)$-module $B(C_n(X),V)$ is relatively injective.
\end{prop}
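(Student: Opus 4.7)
The strategy is to adapt the proof of Proposition~\ref{p:boundedrelinj} (the groupoid case) to the topological setting. Given a relatively injective $\pi_1(X)$-map $i\colon V_0\longrightarrow W_0$ between Banach $\pi_1(X)$-modules with a norm non-increasing $\R$-split $\sigma\colon W_0\longrightarrow V_0$ and a $\pi_1(X)$-map $\alpha\colon V_0\longrightarrow B(C_n(X),V)$, the plan is to construct the required extension $\beta\colon W_0\longrightarrow B(C_n(X),V)$ explicitly.

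To set up the formula, I would first choose a basepoint $\widetilde{c}\in c$ in each connected component $c$ of $\widetilde{X}$. For any $(\tau,x)\in Q_n(X)_f$ with $x$ in the component $c$, let $\gamma_x:=\gamma_{\widetilde{c},x}\in\Mor_{\pi_1(X)}(p(\widetilde{c}),f)$ denote the unique morphism taking $\widetilde{c}$ to $x$, as in Remark~\ref{r:paths}. Mirroring the conjugation trick from Proposition~\ref{p:boundedrelinj}, I set
\[
\beta_f(w)(\tau,x) := \gamma_x\cdot\alpha_{p(\widetilde{c})}\bigl(\sigma_{p(\widetilde{c})}(\gamma_x^{-1}\cdot w)\bigr)(\tau,\widetilde{c})
\]
for $w\in W_{0,f}$ and extend $\R$-linearly to $C_n(X)_f$. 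Four properties then need to be verified: (a) well-definedness of the formula on the Deck-equivalence classes making up $Q_n(X)_f$; (b) $\pi_1(X)$-equivariance of $\beta$; (c) the extension property $\beta\circ i=\alpha$; and (d) the norm bound $\|\beta\|_\infty\leq\|\alpha\|_\infty$.

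Properties (b), (c) and (d) should be routine: (b) follows from the cocycle identity $\gamma'\cdot\gamma_{\widetilde{c},x}=\gamma_{\widetilde{c},\gamma'\cdot x}$ of Remark~\ref{r:paths} combined with the $\pi_1(X)$-equivariance of $\alpha$; (c) is immediate from $\sigma\circ i=\id_{V_0}$ together with the equivariance of $\alpha$ and $i$; (d) uses the isometry of the $\pi_1(X)$-action, the bound $\|\sigma\|_\infty\leq 1$, and the fact that the basis elements of $C_n(X)_{p(\widetilde{c})}$ have $\ell^1$-norm one.

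The main obstacle I expect is property (a): one must check that replacing $(\tau,x)$ by any Deck-equivalent representative $(\beta\tau,\beta x)$ with $\beta\in\Deck(X)$ leaves the right-hand side unchanged. Carrying this out rests on a careful interplay between Lemma~\ref{l:tgaction} (commutativity of Deck and $\pi_1(X)$-actions on $\widetilde{X}$) and the equivariance of $\alpha$, which has to absorb the non-equivariance of the split $\sigma$. Should a direct verification prove too delicate, a structural alternative I would pursue is to reduce to connected $X$ via Proposition~\ref{p:topadditive}, identify $C_n(X)_f$ as a $\pi_1(X,f)$-free Banach module on the set of orbit representatives in $Q_n(X)_f$ (using that the $\pi_1(X,f)$-action on $Q_n(X)_f$ is free via the freeness of the Deck action on fibers), and then conclude relative injectivity of $B(C_n(X)_f,V_f)$ from the standard fact that Banach modules coinduced from the trivial subgroup are relatively injective and that products of relatively injective modules remain relatively injective.
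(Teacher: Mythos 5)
Your main construction has a genuine gap exactly at the step you flag as the main obstacle: well-definedness on Deck-orbits fails for a formula anchored at a fixed basepoint $\widetilde c$ of the component. Concretely, let $d\in\Deck(X)$ preserve the component of $x$ and let $h_d\in\Mor_{\pi_1(X)}\bigl(p(\widetilde c),p(\widetilde c)\bigr)$ be the unique element with $h_d\cdot\widetilde c=d\cdot\widetilde c$. By Lemma~\ref{l:tgaction} one gets $\gamma_{\widetilde c,\,d\cdot x}=\gamma_{\widetilde c,\,x}\cdot h_d$, and in $Q_n(X)_{p(\widetilde c)}$ one has $(d\cdot\tau,\widetilde c)=(\tau,d^{-1}\cdot\widetilde c)=h_d^{-1}\cdot(\tau,\widetilde c)$. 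Evaluating your formula on the representative $(d\cdot\tau,\,d\cdot x)$ and using the equivariance of $\alpha$ therefore gives $\gamma_x\cdot\alpha\bigl(h_d\cdot\sigma(h_d^{-1}\cdot u)\bigr)(\tau,\widetilde c)$ with $u:=\gamma_x^{-1}\cdot w$, whereas the representative $(\tau,x)$ gives $\gamma_x\cdot\alpha\bigl(\sigma(u)\bigr)(\tau,\widetilde c)$. Since the split $\sigma$ is only $\R$-linear, $h_d\cdot\sigma(h_d^{-1}\cdot u)\neq\sigma(u)$ in general, so the two values disagree: the equivariance of $\alpha$ cannot absorb the non-equivariance of $\sigma$ here. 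The paper avoids this by anchoring the transport at the simplex's own vertex instead of at a chosen basepoint: with split $\tau$ and simplex $\sigma$ (the paper's notation), it sets $\beta_e(w)(\sigma,x)=\alpha_e\bigl(\gamma_{\sigma_0,x}\cdot\tau_{p(\sigma_0)}(\gamma_{x,\sigma_0}\cdot w)\bigr)(\sigma,x)$. Since $\gamma_{d\cdot x,\,d\cdot\sigma_0}=\gamma_{x,\sigma_0}$ by Remark~\ref{r:paths} and the evaluation point $(\sigma,x)$ is the same class, Deck-invariance is manifest, and equivariance, the extension property $\beta\circ j=\alpha$ and the bound $\|\beta\|_\infty\leq\|\alpha\|_\infty$ then follow just as in the group case (Proposition~\ref{p:boundedrelinj}); this is the exact analogue of conjugating by the first entry $g_0$ in the Bar resolution.

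Your structural fallback does not close this gap as stated, because it proves a different statement: freeness of the $\pi_1(X,f)$-action on $Q_n(X)_f$ is correct (a deck transformation fixing a simplex fixes a point, hence is trivial), and the coinduced-module and product arguments do give relative injectivity of each fibre $B(C_n(X)_f,V_f)$ as a Banach module over the \emph{vertex group} $\pi_1(X,f)$. But the Proposition asserts relative injectivity over the \emph{groupoid} $\pi_1(X)$: the extension problem involves $\pi_1(X)$-maps and requires a globally $\pi_1(X)$-equivariant, uniformly bounded $\beta$ defined at every object $e\in X$. Passing from vertex-group relative injectivity of the fibres to groupoid relative injectivity needs an additional transport lemma (for a connected groupoid, spread a vertex-group-equivariant solution to all objects via morphisms $\gamma_f\colon e\longrightarrow f$ and check independence of choices, equivariance and norms); this is true, but it is not established in the paper, and its proof is precisely the equivariant-transport computation you were hoping to sidestep. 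Note also that Proposition~\ref{p:topadditive} concerns additivity of $H^\ast_b$ of spaces and does not by itself reduce relative injectivity to connected components, although that reduction is unproblematic since modules and maps split over components.
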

\begin{proof}
Let $j\colon U \longrightarrow W$ be a relatively injective map between Banach $\pi_1(X)$-modules. Let $\tau \colon W\longrightarrow U$ be a splitting of $j$ as in Definition~\ref{d:relinj}(i). Let \[\alpha\colon U\longrightarrow B(C_n(X),V)\] be a bounded $\pi_1(X)$-map. We define a family of linear maps by setting for each~$e\in X$
\begin{align*}
 \beta_e\colon W_e&\longrightarrow B(C_n(X)_e,V_e)\\
 w&\longmapsto \bigl ( (\sigma,x) \longmapsto \alpha_e (\gamma_{\sigma_0,x}\cdot \tau_{p(\sigma_0)} (\gamma_{x,\sigma_0}\cdot w))(\sigma,x)\bigr).
\end{align*}
Here $\gamma_{x,\sigma_0}$ denotes the Element in $\pi_1(X)$ that corresponds to a path from~$x$ to $\sigma_0$ in $\widetilde X$. 
\begin{itemize}
\item By Remark~\ref{r:paths}, for each $e\in X$ and each $w\in W_e$, the map $\beta_e(w)$ is well-defined.
\item For each $e\in X$ the map $\beta_e$ is bounded and $\|\beta_e\|_\infty\leq \|\alpha_e\|_\infty$, thus in particular $\|\beta\|_\infty\leq \|\alpha\|_\infty$: We have for each $w\in W_e$ and each $g_0\in\pi_1(X)$ with $t(g_0)=e$
\begin{align*}
 \| \alpha_e(g_0\cdot \tau_{s(g_0)}(g_0^{-1}\cdot w))\|_{\infty}&\leq \|\alpha_e\|_{\infty}\cdot \|g_0\cdot \tau_{s(g_0)}(g_0^{-1}\cdot w)\|_{\infty}\\
&\leq  \|\alpha_e\|_{\infty}\cdot \|\tau_{s(g_0)}\|_\infty \cdot\|(g_0^{-1}\cdot w)\|\\
&\leq  \|\alpha_e\|_{\infty} \cdot\| w\|.
\end{align*} 
\item The map $\beta$ is $\pi_1(X)$-equivariant by Remark~\ref{r:paths}.
\item For all $e\in X$, $w\in W_e$ and $(\sigma,x) \in Q_n(X)_e$ we have
\begin{align*}
 (\beta_e\circ j_e) (w)(\sigma,x) &= \alpha_e(\gamma_{\sigma_0,x} \tau_{p(\sigma_0)}(\gamma_{x,\sigma_0} j_e(w)))(\sigma,x)\\
&= \alpha_e(\gamma_{\sigma_0,x} \tau_{p(\sigma_0)}j_e(\gamma_{x,\sigma_0} (w)))(\sigma,x)\\
&= \alpha_e(w)(\sigma,x).
\end{align*}
Hence $\alpha= \beta\circ j$. \qedhere
\end{itemize}
\end{proof}
\begin{cor}\label{c:strongreltop}
 Let $X$ be a CW-complex and $V$ a Banach $\pi_1(X)$-module. Then $((B(C_{\ast}(X),V')_{\ast\in \N},\nu)$ is a strong, relatively injective~$\pi_1(X)$-resolution of~$V'$.
\end{cor}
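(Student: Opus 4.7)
The plan is to verify the two defining properties separately: relative injectivity of each $B(C_n(X), V')$, and the existence of a norm non-increasing $\R$-cochain contraction of the augmented complex $(B(C_\ast(X), V'), \nu)$.

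Relative injectivity is immediate. Since $V'$ is itself a Banach $\pi_1(X)$-module, Proposition~\ref{p:toprelinj} applied to $V'$ yields that $B(C_n(X), V')$ is relatively injective for every $n \in \N$; this disposes of the first property without any additional work.

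For the strong property, the strategy is to produce a pointed equivariant family of cochain contractions over $(X, V')$ and then invoke Proposition~\ref{p:suitabletococontraction}. I would build the family as follows. Decompose $X = \amalg_{\lambda \in \Lambda} X^\lambda$ into connected components. For each $x_0 \in X$, say with $x_0 \in X^\lambda$, fix a lift $\widetilde{x_0} \in \widetilde X$; restricting $V$ makes $V_{x_0}$ a Banach $\pi_1(X^\lambda, x_0)$-module. Theorem~\ref{t:ivaloeh}, applied to the connected CW-complex $X^\lambda$ together with this module, then yields a strong pointed cochain contraction $s^\ast_{\widetilde{x_0}}$ of the augmented complex $B(C^{\sing}_\ast(\widetilde X_{\widetilde{x_0}}; \R), (V')_{p(\widetilde{x_0})})$, using the identification $(V')_{x_0} = (V_{x_0})'$. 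Remark~\ref{r:pointed equivariant} then propagates these chosen contractions by $\Deck(X)$-translation to a pointed equivariant family $(t^\ast_x)_{x \in \widetilde X}$ over $(X, V')$, which remains norm non-increasing because the deck action on the cochain complex (with the trivial action on coefficients) is isometric. Feeding $(t^\ast_x)_{x \in \widetilde X}$ into Proposition~\ref{p:suitabletococontraction} produces the desired norm non-increasing cochain contraction of $(B(C_\ast(X), V'), \nu)$.

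The main subtlety lies in the non-connected setting: Theorem~\ref{t:ivaloeh} is stated only for connected CW-complexes, and the target Banach space $V_{x_0}$ genuinely depends on $x_0 \in X$, so one cannot simply apply the theorem once globally. Applying it one basepoint at a time aligns the targets with the coefficient datum demanded by Proposition~\ref{p:suitabletococontraction}, and then Remark~\ref{r:pointed equivariant} assembles the individual contractions into the required equivariant family. Once this bookkeeping is handled, the remaining steps are direct applications of results already established.
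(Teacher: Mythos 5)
Your proposal is correct and follows essentially the same route as the paper: relative injectivity via Proposition~\ref{p:toprelinj}, and strongness by feeding the pointed contractions from Theorem~\ref{t:ivaloeh} through Remark~\ref{r:pointed equivariant} into Proposition~\ref{p:suitabletococontraction}. Your explicit componentwise bookkeeping (applying Theorem~\ref{t:ivaloeh} per connected component with the vertex-group module $V_{x_0}$ and the identification $(V_{x_0})'=(V')_{x_0}$) is exactly what the paper's phrase ``for each $x\in\widetilde X$'' with coefficients $V'_{p(x)}$ does implicitly, so there is no substantive difference.
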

\begin{proof}
 By Theorem~\ref{t:ivaloeh}, for each $x\in \widetilde X$, there is a strong, pointed cochain contraction 
\[ (s_x^{\ast}\colon B(C^{\sing}_{\ast}(\widetilde X_x;\R),V'_{p(x)})\longrightarrow B(C^{\sing}_{\ast-1}(\widetilde X_x;\R),V'_{p(x)}))_{\ast\in \N}.
\]
Hence, by Remark~\ref{r:pointed equivariant} and Proposition~\ref{p:suitabletococontraction}, $((B(C_{\ast}(X),V')_{\ast\in \N},\nu)$ is a strong resolution of $V'$. By Proposition~\ref{p:toprelinj}, it is also a relatively injective~$\pi_1(X)$-resolution. 
\end{proof}

\begin{prop}\label{p:mappingthminvers}
 Let $X$ be a CW-complex. There is a canonical, norm non-increasing $\pi_1(X)$-chain map~$\Phi_{\ast}\colon C_\ast(X)\longrightarrow C_\ast(\pi_1(X))$ extending the identity on $\R[X]$, given by
\begin{align*}
 \Phi^X_\ast \colon C_\ast(X) &\longrightarrow C_\ast(\pi_1(X))\\
(\sigma,x)&\longmapsto (\gamma_{\sigma_0,x},\gamma_{\sigma_1,\sigma_0},\dots,\gamma_{\sigma_\ast,\sigma_{\ast-1}}).
\end{align*}
\end{prop}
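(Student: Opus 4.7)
The plan is to verify in turn that $\Phi^X_\ast$ is well-defined on the quotient $Q_\ast(X)$, lands in $C_\ast(\pi_1(X))$, is $\pi_1(X)$-equivariant, is norm non-increasing, is a chain map, and augments to the identity on $\R[X]$. Throughout, the key algebraic identities are those recorded in Remark~\ref{r:paths}, together with the composition law $\gamma_{y,z}\cdot \gamma_{x,y}=\gamma_{x,z}$ (which is immediate from $(\gamma_{y,z}\cdot\gamma_{x,y})\cdot x=\gamma_{y,z}\cdot y=z$ and uniqueness).

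First I would check well-definedness: for $\beta\in\Deck(X)$, the identity $\gamma_{\beta\cdot x,\beta\cdot y}=\gamma_{x,y}$ from Remark~\ref{r:paths} ensures that the tuple $\Phi^X_n(\sigma,x)$ depends only on the class of $(\sigma,x)$ in $Q_n(X)$. The target condition $t(\gamma_{\sigma_0,x})=p(x)$ together with the telescoping $s(\gamma_{\sigma_i,\sigma_{i-1}})=p(\sigma_i)=t(\gamma_{\sigma_{i+1},\sigma_i})$ then places $\Phi^X_n(\sigma,x)$ in $P_n(\pi_1(X))_{p(x)}$. Equivariance is again an application of Remark~\ref{r:paths}: for $g\in\pi_1(X)$ with $s(g)=p(x)$, only the first entry of $\Phi^X_n(\sigma,x)$ is affected by the $\G$-action on $C_n(\pi_1(X))$, and $g\cdot\gamma_{\sigma_0,x}=\gamma_{\sigma_0,g\cdot x}$ is exactly the first entry of $\Phi^X_n(\sigma,g\cdot x)=\Phi^X_n(g\cdot(\sigma,x))$. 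Norm non-increasingness is automatic since $\Phi^X_n$ maps basis elements of $C_n(X)_e$ to basis elements of $C_n(\pi_1(X))_e$.

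The main computational step is verifying the chain map property $\partial_n\circ\Phi^X_n=\Phi^X_{n-1}\circ\partial_n$. Setting $g_0:=\gamma_{\sigma_0,x}$ and $g_i:=\gamma_{\sigma_i,\sigma_{i-1}}$ for $i\geq 1$, the composition law gives
\[
g_i\cdot g_{i+1}=\gamma_{\sigma_{i+1},\sigma_{i-1}}\quad (1\leq i\leq n-1),\qquad g_0\cdot g_1=\gamma_{\sigma_1,x}.
\]
Plugging this into the definition of $\partial_n$ on $C_n(\pi_1(X))$, the $i$-th term of $\partial_n\Phi^X_n(\sigma,x)$ matches $\Phi^X_{n-1}(\partial_{i,n}\sigma,x)$ term by term: for $1\leq i\leq n-1$ the entry $\gamma_{\sigma_{i+1},\sigma_{i-1}}$ is exactly what appears when the vertex $\sigma_i$ is deleted; for $i=0$, the collapse produces the tuple $(\gamma_{\sigma_1,x},\gamma_{\sigma_2,\sigma_1},\dots)$; and for $i=n$ the last entry is dropped, matching deletion of the final vertex of $\sigma$. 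Finally, $\varepsilon_{\pi_1(X)}\circ\Phi^X_0(\sigma,x)=t(\gamma_{\sigma_0,x})\cdot 1=p(x)\cdot 1=\varepsilon_X(\sigma,x)$, so $\Phi^X_\ast$ extends the identity on $\R[X]$.

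The only part requiring any care is the chain map calculation, but even this reduces to the single composition identity for the $\gamma_{\cdot,\cdot}$; no geometric input beyond Remark~\ref{r:paths} is needed. The naturality/canonicity claim is built into the construction, since the $\gamma_{\sigma_i,\sigma_{i-1}}$ are intrinsically defined once a representative $(\sigma,x)\in Q_n(X)$ is chosen and are insensitive to that choice by the first step.
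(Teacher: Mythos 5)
Your proposal is correct and follows essentially the same route as the paper: well-definedness and equivariance via Remark~\ref{r:paths}, norm non-increasingness because simplices map to simplices, and the chain map property by the term-by-term computation $\gamma_{\sigma_i,\sigma_{i-1}}\cdot\gamma_{\sigma_{i+1},\sigma_i}=\gamma_{\sigma_{i+1},\sigma_{i-1}}$ (with the boundary cases $i=0$ and $i=n$ handled exactly as in the paper's displayed identity). Your explicit check of the augmentation compatibility and the spelled-out composition law are just slightly more detailed versions of what the paper leaves implicit.
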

\begin{proof}
 The map $\Phi^X_{\ast}$ is well-defined and $\pi_1(X)$-equivariant by Remark~\ref{r:paths}. It maps simplices to simplices and is thus norm non-increasing. It is a chain map since for all $(\sigma,x)\in Q_n(X)$ and all $i\in\{0,\dots,n\}$ we have 
\begin{align*}
 \Phi^X_n(\partial_{i,n} \sigma,x) &= (\gamma_{\sigma_0,x},\gamma_{\sigma_1,\sigma_0},\dots,\gamma_{\sigma_i,\sigma_{i-1}}\cdot \gamma_{\sigma_{i+1},\sigma_i},\dots, \gamma_{\sigma_{\ast},\sigma_{\ast-1}})\\
&= \partial_{i,n}\Phi^X_n(\sigma,x).\qedhere
\end{align*}

\end{proof}
\begin{cor}[Absolute Mapping Theorem for Groupoids]\label{c:absmappingthm}
Let $X$ be a CW-complex and let $V$ be a Banach $\pi_1(X)$-module. Then there is a canonical isometric isomorphism of graded semi-normed $\R$-modules
\begin{align*}
 H_b^{\ast}(X;V')\longrightarrow H_b^{\ast}(\pi_1(X);V')
\end{align*}
\end{cor}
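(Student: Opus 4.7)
The strategy is to apply the abstract machinery of Corollary~\ref{c:strongrelin} to the topological resolution of $V'$ and then use Proposition~\ref{p:mappingthminvers} to exhibit an explicit inverse that is also semi-norm non-increasing. First I would note the tautological identification $B(C_\ast(X),V')^{\pi_1(X)} = B_{\pi_1(X)}(C_\ast(X),V') = C_b^\ast(X;V')$, so that the cohomology of the invariants of the topological resolution is literally $H_b^\ast(X;V')$. Combining this with Corollary~\ref{c:strongreltop}, which says that $(B(C_\ast(X),V'),\nu)$ is a strong, relatively injective $\pi_1(X)$-resolution of $V'$, Corollary~\ref{c:strongrelin} applied to $\G=\pi_1(X)$ and $D^\ast=B(C_\ast(X),V')$ immediately produces a canonical semi-norm non-increasing isomorphism
\[
\Psi_X \colon H_b^\ast(X;V') \longrightarrow H_b^\ast(\pi_1(X);V').
\]

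To promote $\Psi_X$ to an isometry I would construct an inverse $\Theta_X$ that is itself semi-norm non-increasing, using the chain map $\Phi^X_\ast \colon C_\ast(X)\to C_\ast(\pi_1(X))$ from Proposition~\ref{p:mappingthminvers}. Since $\Phi^X_\ast$ is a $\pi_1(X)$-equivariant, norm non-increasing chain map extending the identity on $\R_{\pi_1(X)}$, applying $B(\,\cdot\,,V')$ yields a norm non-increasing $\pi_1(X)$-cochain map $B(C_\ast(\pi_1(X)),V')\to B(C_\ast(X),V')$ that is compatible with the bar augmentations, hence extends $\id_{V'}$. Restricting to $\pi_1(X)$-invariants and passing to cohomology gives a semi-norm non-increasing map
\[
\Theta_X \colon H_b^\ast(\pi_1(X);V') \longrightarrow H_b^\ast(X;V').
\]

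Finally, both compositions $\Theta_X\circ\Psi_X$ and $\Psi_X\circ\Theta_X$ are induced on invariants and then cohomology by $\pi_1(X)$-cochain endomorphisms of strong relatively injective $\pi_1(X)$-resolutions of $V'$ (namely $B(C_\ast(X),V')$ and $B(C_\ast(\pi_1(X)),V')$, respectively) extending $\id_{V'}$. By the uniqueness clause of the fundamental lemma (Proposition~\ref{t:fundamental}), each such endomorphism is $\pi_1(X)$-cochain homotopic to the identity, so each composition induces the identity on bounded cohomology. Thus $\Psi_X$ and $\Theta_X$ are mutually inverse semi-norm non-increasing maps, which automatically forces both to be isometric; canonicity is inherited from the canonicity of Corollary~\ref{c:strongrelin} together with the explicit and natural definition of $\Phi^X_\ast$. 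The genuine conceptual work — constructing the pointed equivariant cochain contractions via Theorem~\ref{t:ivaloeh} and verifying relative injectivity in Proposition~\ref{p:toprelinj} — has already been carried out; the only remaining subtlety here is the book-keeping needed to identify the two cochain-level recipes for $\Psi_X$ and $\Theta_X$ as mutual inverses, and this is precisely what the uniqueness half of the fundamental lemma delivers.
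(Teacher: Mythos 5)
Your proposal is correct and follows essentially the same route as the paper: it uses Corollary~\ref{c:strongreltop} together with the norm non-increasing map to the standard resolution (you invoke this via Corollary~\ref{c:strongrelin}, the paper via Proposition~\ref{p:normnonincr} directly) in one direction, the dualised map $B(\Phi^X_\ast,V')$ from Proposition~\ref{p:mappingthminvers} in the other, and the uniqueness clause of the fundamental lemma, Proposition~\ref{t:fundamental}, to conclude the two norm non-increasing maps are mutually inverse and hence isometric.
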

\begin{proof}
 By Corollary~\ref{c:strongreltop} the cochain complex $ B(C_{\ast}(X),V')$ is a strong, relatively injective resolution of $V'$. Thus by Proposition~\ref{p:normnonincr}, there exists a norm non-increasing $\pi_1(X)$-cochain map $ B(C_{\ast}(X),V')\longrightarrow B(C_{\ast}(\pi_1(X)),V')$ extending $\id_{V'}$. By Proposition~\ref{p:mappingthminvers}, there exists a norm non-increasing~$\pi_1(X)$-cochain map $B(C_{\ast}(\pi_1(X)),V')\longrightarrow B(C_{\ast}(X),V')$ extending $\id_{V'}$. By the fundamental lemma for groupoids, Proposition~\ref{t:fundamental} these two maps induce canonical, mutually inverse, isometric isomorphisms in bounded cohomology. 
\end{proof}
\section{Relative Bounded Cohomology of Topological spaces}
\sectionmark{Relative Bounded Cohomology of Spaces}
In this section, we prove the relative mapping theorem for certain pairs of CW-complexes, extending the result of Frigerio and Pagliantini~\cite[Proposition 4.4]{FP} to groupoids, and in particular, to the non-connected case.
\subsection{Bounded Cohomology of Pairs of Topological spaces}
In this section, we will define bounded cohomology for $\pi_1$-injective pairs of CW-complexes.
\begin{rem}\label{r:pi1inf}
Let $i\colon A\longrightarrow X$ be a CW-pair, i.e., let $X$ be a CW-complex,~$A$ a CW-subcomplex and $i$ the canonical inclusion.
We call such a map \emph{$\pi_1$-injective} if $\pi_1(i)$ is injective as a groupoid map. Here, this is equivalent to saying that for all $a\in A$, the map $\pi_1(i,a)\colon \pi_1(A,a)\longrightarrow \pi_1(X,a)$ between the fundamental groups at $a$ is injective. 
\end{rem}

Let $i\colon A\longrightarrow X$ be a CW-pair and let $i$ be $\pi_1$-injective. In particular, we can assume $\widetilde A\subset \widetilde X$. Consider the groupoid pair~$\pi_1(i)\colon \pi_1(A)\longhookrightarrow \pi_1(X)$ and let $V$ be a $\pi_1(X)$-module. Then 
\begin{equation*}
  C^\ast(X,A;V) := (B(C_\ast(X),V),B(C_{\ast}(A),\pi_1(i)^\ast V), B(C_\ast(i),V))
\end{equation*}
is a $(\pi_1(X),\pi_1(A))$-cochain complex. We write $K^\ast(X,A;V)$ for the kernel of the map $C_b^{\ast}(i;V)\colon C_b^{\ast}(X;V)\longrightarrow C_b^{\ast}(A;\pi_1(i)^{\ast}V)$. Together with the induced norm, this defines a normed $\R$-cochain complex, see also~Section~\ref{ss:relhomalgpairs}.

We define a category $\ToppBanc$ of CW-pairs $(X,A)$ together with $\pi_1(X)$-modules similarly to $\TopBanc$.  

\begin{rem}[Functoriality]
 Let $i\colon A\longrightarrow X$ and $j\colon B\longrightarrow Y $ be $\pi_1$-injec\-tive CW-pairs, $f\colon (X,A) \longrightarrow (Y,B)$ a continuous map and $\varphi\colon \pi_1(f)^\ast W \longrightarrow V$ a~$\pi_1(X)$-map. Then the right-hand side of the following diagram commutes
\begin{center}
\begin{tikzpicture}
\matrix (m) [scale = 0.5,matrix of math nodes, row sep=2.5em,
column sep=4em, text height=2.5ex, text depth=0.25ex]
{K^\ast(Y,B;W)&C_b^\ast(Y;W)&C_b^\ast(B;\pi_1(j)^\ast W)\\
K^\ast(X,A;V)&C_b^\ast(X;V)&C_b^\ast(A;\pi_1(i)^\ast V)
\\ };
\path[->]
(m-1-1) edge node[auto] {$ $}(m-1-2)
(m-2-1) edge node[auto] {$ $}(m-2-2)
(m-1-2) edge node[auto] {$ C_b^{\ast}(f;\varphi)$}(m-2-2)
(m-1-2) edge node[auto] {$C_b^\ast(j;W)$} (m-1-3)
(m-2-2) edge node[below] {$C_b^\ast(i;V) $} (m-2-3)
(m-1-3) edge node[auto] {$C_b^\ast(f|_{A};\pi_1(i)^\ast\varphi)$} (m-2-3)
;
\path[dashed,->]
(m-1-1) edge node[auto] {$C_b^{\ast}(f;\varphi)|_{K^\ast(Y,B;W)}$}  (m-2-1)
;
\end{tikzpicture}
\end{center}
Hence, the cochain map on the left-hand side is defined. We write 
\[K^\ast(f;\varphi) := C_b^\ast(f;\varphi)|_{K^\ast(Y,B;W)}.\]
This defines a contravariant functor $K^\ast\colon \ToppBanc \longrightarrow \RCh^{\|\cdot\|}$.
\end{rem}

\begin{def.}
 We call 
\[
 H_b^\ast(X,A;V) := H^\ast(K^\ast(X,A;V))
\]
endowed with the induced semi-norm, \emph{the bounded cohomology of $X$ relative to~$A$ with coefficients in $V$}. This defines a functor $\ToppBanc\longrightarrow \Modgrn$. 
\end{def.}
\begin{rem}
 Let $i\colon A\longhookrightarrow X$ be a CW-pair. Gromov~\cite{Gr82} defined bounded cohomology of $X$ relative to $A$ with coefficients in $\R$ as the cohomology of the kernel of the map
\begin{align*}
 B(C^{\sing}_{\ast}(i;\R),\R)\colon B(C^{\sing}_\ast(X;\R),\R) &\longrightarrow B(C_\ast^{\sing}(A;\R),\R),
\end{align*}
endowed with the induced semi-norm. If the pair $(X,A)$ is $\pi_1$-injective, our definition coincides with the one of Gromov by Proposition~\ref{p:topcalculatesbc}. 
\end{rem}
\subsection{The Relative Mapping Theorem}\label{ss:relmap}
\begin{prop}\label{p:relinjtoppair}
 Let $i\colon A\longhookrightarrow X$ be a CW-pair and $V$ a Banach $\pi_1(X)$-module. Then for all $n\in \N$, the Banach $(\pi_1(X),\pi_1(A))$-module $C^n(X,A;V)$ is relatively injective. 
\end{prop}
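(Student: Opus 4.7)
The plan is to mimic the absolute case (Proposition~\ref{p:toprelinj}) componentwise, constructing the extension by the same formula on each side, and then verifying that the resulting pair intertwines the structure maps. So let $(j,j')\colon (U,U',\varphi)\longrightarrow (W,W',\psi)$ be a relatively injective $(\pi_1(X),\pi_1(A))$-map with splitting $(\tau,\tau')\colon (W,W',\psi)\longrightarrow (U,U',\varphi)$, and let $(\alpha,\alpha')\colon (U,U',\varphi)\longrightarrow C^n(X,A;V)$ be a $(\pi_1(X),\pi_1(A))$-map. First, I would define $\beta\colon W\longrightarrow B(C_n(X),V)$ via Ivanov's formula as in the absolute proof, using $\tau$; and analogously define $\beta'\colon W'\longrightarrow B(C_n(A),\pi_1(i)^\ast V)$ by
\[
 \beta'_a(w')(\sigma,x) := \alpha'_a\bigl(\gamma^A_{\sigma_0,x}\cdot \tau'_{p_A(\sigma_0)}(\gamma^A_{x,\sigma_0}\cdot w')\bigr)(\sigma,x),
\]
where $\gamma^A_{x,y}$ denotes the morphism in $\pi_1(A)$ associated to a choice of path in $\widetilde A$. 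The arguments already given in Proposition~\ref{p:toprelinj} show that $\beta$ and $\beta'$ are each $\pi_1(X)$- respectively $\pi_1(A)$-equivariant, norm non-increasing bounded by $\|\alpha\|_\infty$ and $\|\alpha'\|_\infty$, and that $\beta\circ j=\alpha$, $\beta'\circ j'=\alpha'$.

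The essential new step is to show that $(\beta,\beta')$ is a morphism in $\GABan$, i.e.\ that $B(C_n(i),V)\circ \pi_1(i)^\ast\beta = \beta'\circ \psi$. For $a\in\ob\pi_1(A)$, $w\in W_{i(a)}$ and $(\sigma,x)\in Q_n(A)_a$, the $\pi_1$-injectivity lets one choose the paths compatibly, so that $\pi_1(i)(\gamma^A_{x,y})=\gamma_{\widetilde i(x),\widetilde i(y)}$ in $\pi_1(X)$. Since $\psi$ is a $\pi_1(A)$-map, one has $\gamma^A_{x,\sigma_0}\cdot \psi_a(w)=\psi_{p_A(\sigma_0)}\bigl(\pi_1(i)(\gamma^A_{x,\sigma_0})\cdot w\bigr)$; applying $\tau'$ and using that $(\tau,\tau')$ is a (not necessarily equivariant) pair morphism, i.e.\ $\varphi\circ i^\ast\tau=\tau'\circ\psi$, yields
\[
 \tau'_{p_A(\sigma_0)}(\gamma^A_{x,\sigma_0}\cdot\psi_a(w)) = \varphi_{p_A(\sigma_0)}\bigl(\tau_{p_X(\widetilde i\sigma_0)}(\gamma_{\widetilde i x,\widetilde i\sigma_0}\cdot w)\bigr).
\]
Transporting by $\gamma^A_{\sigma_0,x}$ and applying $\alpha'_a$, and then using the compatibility $\alpha'\circ\varphi=B(C_n(i),V)\circ i^\ast\alpha$ of $(\alpha,\alpha')$ together with $C_n(i)(\sigma,x)=(\widetilde i\sigma,\widetilde i x)$, one finds both sides equal to $\alpha_{i(a)}\bigl(\gamma_{\widetilde i\sigma_0,\widetilde i x}\cdot \tau_{p_X(\widetilde i\sigma_0)}(\gamma_{\widetilde i x,\widetilde i\sigma_0}\cdot w)\bigr)(\widetilde i\sigma,\widetilde i x)$, as required.

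The main (and essentially only) obstacle is the bookkeeping in this compatibility check: one has to keep track of whether each $\gamma$ lives in $\pi_1(A)$ or $\pi_1(X)$, whether each module element is being acted on via its native groupoid action or via the restricted/induced one, and how $\varphi,\psi$ mediate between these. Once this is laid out correctly, each equality used is just equivariance of $\alpha,\alpha',\psi$, the pair-compatibility of $(\tau,\tau')$ and $(\alpha,\alpha')$, and the naturality of the lift $\widetilde i$ with respect to the path classes $\gamma_{x,y}$ recalled in Remark~\ref{r:paths}.
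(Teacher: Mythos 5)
Your proposal is correct and is essentially the paper's own argument: the paper's proof simply says to solve the two extension problems by the construction of Proposition~\ref{p:toprelinj} and to check that the resulting maps commute, which is exactly the componentwise construction and compatibility verification you carry out. (One small remark: the identity $\pi_1(i)(\gamma^A_{x,y})=\gamma_{\widetilde i(x),\widetilde i(y)}$ is not really a consequence of $\pi_1$-injectivity but of the equivariance of the lift $\widetilde i$ as in the functoriality proposition and Remark~\ref{r:paths}, so this step needs no special choice of paths.)
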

\begin{proof}
One solves the extension problems as in the proof of Proposition~\ref{p:toprelinj} and sees directly that the constructed maps commute. 
\end{proof}
The following result is due to Frigerio and Pagliantini (for trivial coefficients), extending the construction of Ivanov:
\begin{prop}[\cite{FP}]\label{p:fp} Let $i\colon A\longhookrightarrow X$ be a pair of connected CW-comp\-le\-xes, such that $i$ is $\pi_1$-injective and induces an isomorphism between the higher homotopy groups. Let $V$ be a Banach module. Let $\widetilde i\colon \widetilde A\longrightarrow \widetilde X$ be the inclusion map. Then there exists a  family of norm non-increasing, pointed cochain contractions
 \begin{align*}
  \begin{pmatrix}\bigr( s_x^{\ast} \colon B(C_{\ast}^{\sing}(\widetilde X;\R),V')\longrightarrow B(C^{\sing}_{\ast-1}(\widetilde X;\R),V')\bigl)_{\ast \in \N_{>0}}\\ s^0_x\colon B(C_{0}^{\sing}(\widetilde X;\R),V')\longrightarrow V' \end{pmatrix}_{x\in\widetilde X}
 \end{align*}
and a family of norm non-increasing, pointed cochain contractions
 \begin{align*}
  \begin{pmatrix}\bigr(\hat s_a^{\ast} \colon B(C_{\ast}^{\sing}(\widetilde A;\R),V')\longrightarrow B(C^{\sing}_{\ast-1}(\widetilde A;\R),V')\bigl)_{\ast \in \N_{>0}}\\ \hat s^0_a\colon B(C_{0}^{\sing}(\widetilde A;\R),V')\longrightarrow V' \end{pmatrix}_{a\in\widetilde A}
 \end{align*}
that is compatible with the restriction to $\widetilde A$, i.e., the following diagram commutes for all $a\in\widetilde A$:
\begin{center}
\begin{tikzpicture}
\matrix (m) [scale = 0.5,matrix of math nodes, row sep=3.5em,
column sep=3em, text height=1.5ex, text depth=0.25ex]
{B(C_{\ast}^{\sing}(\widetilde X),V')& B(C^{\sing}_{\ast-1}(\widetilde X),V')\\
B(C_{\ast}^{\sing}(\widetilde A),V')& B(C^{\sing}_{\ast-1}(\widetilde A),V')
\\ };
\path[->]
(m-1-1) edge node[auto] {$s_{\widetilde i(a)}^{\ast}$}(m-1-2)
(m-1-1) edge node[left] {$B(C_{\ast}^{\sing}(\widetilde i),V')$}(m-2-1)
(m-2-1) edge node[auto] {$\hat s_a^{\ast}$}(m-2-2)
(m-1-2) edge node[auto] {$B(C_{\ast-1}^{\sing}(\widetilde i),V')$}(m-2-2)
;
\end{tikzpicture}
\end{center}
\end{prop}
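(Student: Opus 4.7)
The plan is to piece together two ingredients already present in the literature: the relative construction of Frigerio--Pagliantini~\cite{FP} for trivial coefficients, and the extension of Ivanov's partial cochain contraction to dual Banach coefficient modules, as in L\"oh~\cite{Loe07} and B\"uhler~\cite{Bue11}. Since $i$ is $\pi_1$-injective and induces isomorphisms on higher homotopy groups, the long exact sequence of the pair yields $\pi_n(X,A)=0$ for all $n\in\N_{\geq 2}$; consequently, after identifying $\widetilde A$ with a component of $p_X^{-1}(A)\subset \widetilde X$, the inclusion $\widetilde i\colon \widetilde A\longhookrightarrow \widetilde X$ is a weak homotopy equivalence on each connected component.

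First I would construct the contraction on $\widetilde A$: by Theorem~\ref{t:ivaloeh}, for each $a\in \widetilde A$ there is a norm non-increasing, pointed cochain contraction $\hat s_a^\ast$ of $(B(C_\ast^{\sing}(\widetilde A;\R),V'),\nu_a)$. Next I would mimic the construction of Ivanov--Frigerio--Pagliantini at the chain level on $\widetilde X$. The essential geometric input is a \emph{partial} $\R$-chain contraction of $C_\ast^{\sing}(\widetilde X;\R)$ obtained cell by cell: one inductively chooses, for every singular simplex $\sigma$ of $\widetilde X$, a geometric cone filling it with respect to a chosen basepoint, with the crucial extra property that if $\im\sigma\subset \widetilde A$ then the filling is already contained in $\widetilde A$ and agrees with the one chosen on $\widetilde A$. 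This is possible because $(\widetilde X,\widetilde A)$ is $n$-connected for every $n$, which lets one solve the lifting problem $\partial \Delta^{n+1}\to \widetilde X$ against a given homotopy rel.\ boundary; the $\pi_1$-injectivity ensures that the construction respects connected components. The family of fillings is taken to be equivariant under $\Deck(X)$ as in Remark~\ref{r:pointed equivariant}, so that dualising and averaging yields the desired $s_x^\ast$.

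The contractions on $V'$-valued bounded cochains are then obtained by dualising this partial chain contraction and passing to a weak-$\ast$ limit, exactly as in L\"oh's argument~\cite[Lemma~B.2]{Loe07}: the transition from $\R$-coefficients to dual Banach coefficients uses only that the unit ball of $V'$ is weak-$\ast$ compact, so the norm-one-bounded bounded nets of evaluations have weak-$\ast$ accumulation points which can be assembled into a norm non-increasing linear operator. Because the geometric contraction agrees on $\widetilde A$ and on the image of $\widetilde i$, the induced dual maps $s_x^\ast$ and $\hat s_a^\ast$ satisfy the required compatibility
\[
 B(C_{\ast-1}^{\sing}(\widetilde i),V')\circ s_{\widetilde i(a)}^\ast = \hat s_a^\ast\circ B(C_\ast^{\sing}(\widetilde i),V')
\]
for every $a\in\widetilde A$, and the pointedness $s_x^0(\varphi)=\varphi(x)$, $\hat s_a^0(\varphi)=\varphi(a)$ is preserved by construction.

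The main obstacle I anticipate is producing the compatible geometric filling: one must simultaneously ensure norm non-increase (hence each cone is a single simplex, with care about signs), $\Deck(X)$-equivariance, pointedness, and that simplices already in $\widetilde A$ are coned inside $\widetilde A$ in a way that matches the independently constructed contraction on $\widetilde A$. This is precisely the content of the cellular approximation argument carried out in~\cite{FP}; the technical weight of the proof lies in verifying that this inductive cellular construction can be performed while respecting all four constraints, after which the passage from $\R$- to $V'$-coefficients via L\"oh's weak-$\ast$ compactness trick is essentially formal.
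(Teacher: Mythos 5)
There is a genuine gap, and it sits at the heart of your construction. You propose to build a (partial) $\R$-chain contraction of $C_\ast^{\sing}(\widetilde X;\R)$ by inductively coning/filling singular simplices, using that $(\widetilde X,\widetilde A)$ is $n$-connected for all $n$, and then to dualise. But the hypotheses only make the \emph{pair} weakly contractible; they do not make $\widetilde X$ itself contractible or even acyclic, and a chain contraction of $C_\ast^{\sing}(\widetilde X;\R)$ (even one defined only in a range of degrees) forces the reduced homology of $\widetilde X$ to vanish in that range. Take $X=S^1\times S^2$ and $A=\{pt\}\times S^2$: the inclusion is $\pi_1$-injective and an isomorphism on all higher homotopy groups, yet $\widetilde X\simeq \R\times S^2$ has $H_2(\widetilde X;\R)\neq 0$, so a $2$-cycle generating $H_2$ admits no filling and your inductive coning cannot get past degree $2$. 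Geometric coning produces the required pointed chain contraction only in the aspherical case (this is exactly the example given in the paper after the definition of pointed equivariant families), and the proposition is emphatically not restricted to aspherical $X$.

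The actual argument of Ivanov and of Frigerio--Pagliantini is of a different nature: the contraction is constructed directly on the \emph{bounded cochain} complex, not as the dual of any chain-level map. One writes $\widetilde X$ (up to the relevant weak equivalences) as a tower of principal fibrations whose fibres are Eilenberg--MacLane spaces of abelian, hence amenable, groups, and one contracts $B(C_\ast^{\sing}(\widetilde X;\R),\R)$ by combining pullbacks along the tower with averaging over these fibres via invariant means---operations which have no predual at the level of $\ell^1$-chains, which is precisely why no chain contraction is needed or available. The hypothesis that $i$ induces isomorphisms on higher homotopy groups is used to choose the towers for $\widetilde A$ and $\widetilde X$ compatibly, so that the two contractions commute with restriction; Pagliantini's Remark 2.29 (cited in the paper) shows Ivanov's argument does not relativise without this assumption, which your sketch does not engage with since in your scheme the assumption only enters through connectivity of the pair. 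Note also that "averaging" over $\Deck(X)$ is not available (the deck group is not assumed amenable); equivariance is arranged by choosing orbit representatives as in the paper's remark on pointed equivariant families. The parts of your proposal that do match the paper are peripheral: the paper itself does not reprove the $\R$-coefficient statement but cites \cite{FP}, and upgrades to dual Banach coefficients and pointedness via L\"oh's weak-$\ast$ argument \cite{Loe07,Bue11} and \cite[Proposition A.8]{Bl14}, which is the one step you correctly identify as essentially formal.
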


\begin{rem}
  The proof of Proposition~\ref{p:fp} is a generalisation of Ivanov's proof of the fact that $B(C_{\ast}^{\sing}(\widetilde X;\R),\R)$ is a strong resolution of $\R$. Frigerio and Pagliantini state Proposition~\ref{p:fp} only for trivial coefficients. The extension to dual space coefficients and pointed cochain contractions is straightforward~\cite[Proposition A.8]{Bl14}. Park~\cite[Lemma 4.2]{Pa03} stated without proof that, based on Ivanov's work,  Proposition~\ref{p:fp} also holds without the assumption on the higher homotopy groups. Pagliantini~\cite[Remark 2.29]{Pa13} demonstrates however, that Ivanov's proof cannot be generalised directly to the relative setting.
\end{rem}

\begin{lemma}\label{l:suitablepair}
 We can assume the family of cochain contractions $(s^\ast_x)_{x\in\widetilde X}$ and~$(\hat s^\ast_a)_{a\in\widetilde A}$ in Proposition~\ref{p:fp} to be pointed equivariant. 
\end{lemma}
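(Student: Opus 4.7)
The plan is to apply the averaging procedure of Remark~\ref{r:pointed equivariant} to both families simultaneously, with a matched choice of base points, and then verify that the resulting equivariant families remain compatible with restriction along $\widetilde i$. The key geometric input I will use is that, by the $\pi_1$-injectivity of $i$, the inclusion $\widetilde i\colon \widetilde A\longhookrightarrow \widetilde X$ is equivariant with respect to a canonical subgroup inclusion $\Deck(A)\longhookrightarrow \Deck(X)$, which is the cover-theoretic counterpart of the injection $\pi_1(i)\colon \pi_1(A)\longhookrightarrow \pi_1(X)$.

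First I would fix a compatible system of representatives. For each $a_0\in A$ choose a lift $\widetilde{a_0}\in \widetilde A$, and then declare $\widetilde{a_0}^X:=\widetilde i(\widetilde{a_0})\in \widetilde X$ to be the chosen lift of $a_0\in A\subset X$ in $\widetilde X$; for $x_0\in X\setminus A$ pick arbitrary lifts $\widetilde{x_0}\in \widetilde X$. Since the $\Deck$-actions are free, every $x\in \widetilde X$ is uniquely of the form $\beta\cdot \widetilde{p_X(x)}$ with $\beta\in \Deck(X)$, and every $a\in\widetilde A$ is uniquely $\gamma\cdot \widetilde{p_A(a)}$ with $\gamma\in \Deck(A)$. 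Following Remark~\ref{r:pointed equivariant}, I would set
\begin{align*}
 t^\ast_{\beta\cdot \widetilde{x_0}}(\varphi) &:= \beta\ast s^\ast_{\widetilde{x_0}}(\beta^{-1}\ast \varphi),\\
 \hat t^\ast_{\gamma\cdot \widetilde{a_0}}(\psi) &:= \gamma\ast \hat s^\ast_{\widetilde{a_0}}(\gamma^{-1}\ast \psi).
\end{align*}
By Remark~\ref{r:pointed equivariant}, both families are still pointed, norm non-increasing cochain contractions, and are pointed equivariant by construction.

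The step requiring care, and really the only nontrivial point, is preserving the restriction compatibility of Proposition~\ref{p:fp}. Writing $R^\ast := B(C^{\sing}_\ast(\widetilde i), V')$, the map $R^\ast$ is $\Deck(A)$-equivariant because $\widetilde i$ is. For $a\in \widetilde A$ with $a=\gamma\cdot \widetilde{a_0}$, the matched base-point choice yields the crucial identity $\widetilde i(a)=\gamma\cdot \widetilde{a_0}^X$, so
\begin{align*}
 t^\ast_{\widetilde i(a)}\circ R^\ast
 &= (\gamma\ast)\circ s^\ast_{\widetilde{a_0}^X}\circ (\gamma^{-1}\ast)\circ R^\ast \\
 &= (\gamma\ast)\circ s^\ast_{\widetilde{a_0}^X}\circ R^\ast\circ (\gamma^{-1}\ast) \\
 &= (\gamma\ast)\circ R^{\ast-1}\circ \hat s^\ast_{\widetilde{a_0}}\circ (\gamma^{-1}\ast) \\
 &= R^{\ast-1}\circ \hat t^\ast_a,
\end{align*}
where the second equality uses $\Deck(A)$-equivariance of $R^\ast$ and the third is the compatibility of the original contractions at the base point $\widetilde{a_0}$ provided by Proposition~\ref{p:fp}. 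Thus the new families are simultaneously pointed equivariant and compatible with $\widetilde i$. The hard part is conceptually contained in one line: arranging the base points for $\widetilde A$ and $\widetilde X$ so that the $\Deck(A)$-action intertwines with $\widetilde i$, which is exactly what setting $\widetilde{a_0}^X:=\widetilde i(\widetilde{a_0})$ achieves; the rest is a formal verification.
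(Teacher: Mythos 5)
Your proof is correct and takes essentially the same route as the paper: identify $\Deck(A)$ with the subgroup of $\Deck(X)$ preserving $\widetilde A$ (using $\pi_1$-injectivity), choose the lifts so that points of $A$ get lifts inside $\widetilde A$, average as in Remark~\ref{r:pointed equivariant}, and check compatibility with $B(C_\ast^{\sing}(\widetilde i),V')$ via its $\Deck(A)$-equivariance --- the paper performs exactly this construction and leaves the last verification as ``easy to see'', which you spell out. One cosmetic remark: in your displayed computation the restriction map $R^\ast$ sits on the wrong side of the contractions for standard composition order (the identities you are proving and using are $R^{\ast-1}\circ t^\ast_{\widetilde i(a)}=\hat t^\ast_a\circ R^\ast$ and $R^{\ast-1}\circ s^\ast_{\widetilde i(\widetilde a_0)}=\hat s^\ast_{\widetilde a_0}\circ R^\ast$), but the steps themselves are the right ones.
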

\begin{proof}
Since $i$ is $\pi_1$-injective, we can identify $\Deck(A)$ with the subgroup of deck transformations in $\Deck(X)$ mapping $\widetilde A$ to itself. We proceed as in Remark~\ref{r:pointed equivariant}: For each $x_0\in X$, choose a lift $\widetilde {x_0}\in \widetilde X$, such that $\widetilde x_0\in \widetilde A$ if~$x_0\in A$. Set for each~$x_0\in X$, $\beta\in \Deck(X)$ and each~$\varphi\in B(C_\ast^{\text{sing}}(\widetilde X),V')$ 
 \begin{align*}
  t_{\beta\cdot \widetilde  x_0}^{\ast}(\varphi) &= \beta \ast s_{\widetilde x_0}^{\ast}(\beta^{-1}\ast\varphi)
\intertext{and similarly for each  $a_0\in A$, $\beta\in \Deck(A)$ and each~$\varphi\in B(C_\ast^{\text{sing}}(\widetilde A),V')$}
 \hat t_{\beta\cdot \widetilde  a_0}^{\ast}(\varphi) &= \beta \ast \hat s_{\widetilde a_0}^{\ast}(\beta^{-1}\ast\varphi)
 \end{align*}
Then $(t_x^{\ast})_{x\in \widetilde X}$ and $(\hat t_a^\ast)_{a\in\widetilde A}$ are  pointed equivariant families of cochain contractions. It is easy to see that  $(t_x^{\ast})_{x\in \widetilde X}$ and $(\hat t_a^\ast)_{a\in\widetilde A}$ are still compatible with the restriction to~ $\widetilde A$.
\end{proof}

\begin{cor}\label{c:strongtoppair}
  Let $i\colon A\longhookrightarrow X$ be a CW-pair, such that $i$ is $\pi_1$-injective and induces an isomorphism between the higher homotopy groups on each connected component of $A$. Furthermore, let $V$ be a Banach $\pi_1(X)$-module. Then~$C^\ast(X, A;V')$ is a strong $(\pi_1(X),\pi_1(A))$-resolution of $V'$
\end{cor}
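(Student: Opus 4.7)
My plan is to construct a norm non-increasing $(\pi_1(X),\pi_1(A))$-cochain contraction of $C^\ast(X,A;V')$ augmented by the obvious constant maps $\varepsilon\colon V'\to B(C_0(X),V')$ and $\varepsilon'\colon \pi_1(i)^\ast V'\to B(C_0(A),\pi_1(i)^\ast V')$, by descending pointed cochain contractions from the universal covers. The compatibility $B(C_0(i),V')\circ \pi_1(i)^\ast \varepsilon = \varepsilon'$ is immediate from the definitions.

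The key inputs are Proposition~\ref{p:fp} and its equivariant refinement Lemma~\ref{l:suitablepair}. I would invoke them to obtain pointed equivariant families $(s_x^\ast)_{x\in\widetilde X}$ and $(\hat s_a^\ast)_{a\in\widetilde A}$ of norm non-increasing cochain contractions of the augmented cochain complexes $(B(C_\ast^{\sing}(\widetilde X_x;\R),V'_{p(x)}),\nu_{p(x)})$ and $(B(C_\ast^{\sing}(\widetilde A_a;\R),V'_{p(a)}),\nu_{p(a)})$ respectively, such that the intertwining relation $\hat s_a^\ast\circ B(C_\ast^{\sing}(\widetilde i),V') = B(C_{\ast-1}^{\sing}(\widetilde i),V')\circ s_{\widetilde i(a)}^\ast$ holds for every $a\in\widetilde A$. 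Applying Proposition~\ref{p:suitabletococontraction} separately to the $X$-side and the $A$-side then yields norm non-increasing $\R$-cochain contractions $s^\ast$ of $(B(C_\ast(X),V'),\varepsilon)$ and $t^\ast$ of $(B(C_\ast(A),\pi_1(i)^\ast V'),\varepsilon')$.

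It remains to check that $(s^\ast,t^\ast)$ commutes with $B(C_\ast(i),V')$, i.e., that $B(C_{\ast-1}(i),V')\circ \pi_1(i)^\ast s^\ast = t^\ast\circ B(C_\ast(i),V')$ in positive degrees together with the analogous identity at degree zero involving the augmentations. Unfolding the explicit formulae for $s^n$ and $t^n$ from Proposition~\ref{p:suitabletococontraction} and plugging in the pointwise intertwining on the universal covers, this becomes a short direct calculation once we have arranged (in the application of Lemma~\ref{l:suitablepair}) that the chosen lift $\widetilde{x_0}\in\widetilde X$ of any $x_0\in A$ actually lies in $\widetilde A$, so that $\widetilde i(\widetilde{x_0})=\widetilde{x_0}$.

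The main obstacle is that Proposition~\ref{p:fp} is formulated only for pairs of connected CW-complexes, whereas both $X$ and $A$ may be disconnected, and in particular a single component $X^\mu$ of $X$ may contain several components of $A$. I would address this by decomposing $X=\coprod_\mu X^\mu$ and $A=\coprod_\lambda A^\lambda$ into connected components, applying Theorem~\ref{t:ivaloeh} to those $X^\mu$ disjoint from $A$ and Proposition~\ref{p:fp} separately to each pair $(X^\mu,A^\lambda)$ with $A^\lambda\subset X^\mu$, and then using the $\pi_1$-injectivity of $i$ (which identifies $\Deck(A)$ with a subgroup of $\Deck(X)$ stabilising $\widetilde A$) together with a coherent choice of lifts across all components, as in the proof of Lemma~\ref{l:suitablepair}, to glue the local cochain contractions into pointed equivariant families on $\widetilde X$ and $\widetilde A$ that simultaneously realise compatibility with every component of $\widetilde A$ sitting inside each $\widetilde X^\mu$.
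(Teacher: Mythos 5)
Your proposal is correct and is essentially the paper's own argument: obtain pointed equivariant families of compatible contractions from Proposition~\ref{p:fp} via Lemma~\ref{l:suitablepair}, descend them to norm non-increasing contractions of $B(C_\ast(X),V')$ and $B(C_\ast(A),\pi_1(i)^\ast V')$ by Proposition~\ref{p:suitabletococontraction}, and verify by a short direct computation that these commute with $B(C_\ast(i),V')$, using that the chosen lift of any point of $A$ lies in $\widetilde A$. Your explicit componentwise treatment (Proposition~\ref{p:fp} applied to each pair of components, Theorem~\ref{t:ivaloeh} on components of $X$ missing $A$, then equivariant gluing) just spells out what the paper leaves implicit when invoking the connected-case statement for a possibly disconnected pair, and it works because the compatibility constraint at a point of $\widetilde X$ only ever involves the single component of $\widetilde A$ mapping there.
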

\begin{proof}

 By Lemma~\ref{l:suitablepair}, there are pointed equivariant families of norm non-increasing cochain contractions for $X$ and $A$. By a short calculation, we see that the corresponding norm non-increasing cochain contractions for $X$ and $A$ constructed in Proposition~\ref{p:suitabletococontraction} commute~\cite[Corollary 5.3.9]{Bl14}.
\end{proof}

\begin{lemma}\label{l:mappinginverspairs}
Let $i\colon A\longrightarrow X$ be a $\pi_1$-injective CW-pair, and let $V$ be a Banach $\pi_1(X)$-module. Then there exists a canonical, norm non-increa\-sing~$(\pi_1(X),\pi_1(A))$-cochain map
\[
 C^\ast(\pi_1(X),\pi_1(A);V) \longrightarrow C^\ast(X,A;V)
\]
extending $(\id_V,\id_{\pi_1(i)^\ast V})$. 
\end{lemma}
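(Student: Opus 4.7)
The plan is to dualise the pair of chain-level maps $\Phi^X_\ast$ and $\Phi^A_\ast$ constructed in Proposition~\ref{p:mappingthminvers}. Recall that $\Phi^X_\ast\colon C_\ast(X)\to C_\ast(\pi_1(X))$ sends $(\sigma,x)\mapsto(\gamma_{\sigma_0,x},\gamma_{\sigma_1,\sigma_0},\dots,\gamma_{\sigma_\ast,\sigma_{\ast-1}})$ and is $\pi_1(X)$-equivariant, norm non-increasing (it sends basis elements to basis elements), and a chain map. Applying the same construction to $A$ yields a norm non-increasing $\pi_1(A)$-chain map $\Phi^A_\ast\colon C_\ast(A)\to C_\ast(\pi_1(A))$.

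First I would verify that the pair $(\Phi^X_\ast,\Phi^A_\ast)$ is compatible with the inclusion, i.e., that the square
\begin{center}
\begin{tikzpicture}
\matrix (m) [matrix of math nodes, row sep=2.5em, column sep=3em, text height=1.5ex, text depth=0.25ex]
{C_\ast(A) & C_\ast(\pi_1(A))\\
C_\ast(X) & C_\ast(\pi_1(X))\\};
\path[->]
(m-1-1) edge node[above] {$\Phi^A_\ast$} (m-1-2)
(m-1-1) edge node[left] {$C_\ast(i)$} (m-2-1)
(m-1-2) edge node[right] {$C_\ast(\pi_1(i))$} (m-2-2)
(m-2-1) edge node[below] {$\Phi^X_\ast$} (m-2-2)
;
\end{tikzpicture}
\end{center}
commutes. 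Since $i$ is $\pi_1$-injective we may (as in Remark~\ref{r:pi1inf}) identify $\widetilde A$ with its image in $\widetilde X$ under a chosen lift $\widetilde i$; then $C_\ast(i)$ is simply $(\sigma,a)\mapsto(\sigma,a)$ on representatives. Given $a,b\in \widetilde A$, a path in $\widetilde A$ from $a$ to $b$ is also a path in $\widetilde X$ from $\widetilde i(a)$ to $\widetilde i(b)$, so $\pi_1(i)(\gamma^A_{a,b}) = \gamma^X_{\widetilde i(a),\widetilde i(b)}$ in the notation of Remark~\ref{r:paths}. Chasing the definitions around the square, both compositions send $(\sigma,a)\in Q_n(A)$ to $(\gamma^X_{\sigma_0,a},\gamma^X_{\sigma_1,\sigma_0},\dots,\gamma^X_{\sigma_n,\sigma_{n-1}})$, which is exactly commutativity.

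Then I would dualise: applying $B(\args,V)$ to $\Phi^X_\ast$ gives a $\pi_1(X)$-cochain map $B(\Phi^X_\ast,V)\colon B(C_\ast(\pi_1(X)),V)\to B(C_\ast(X),V)$, and applying $B(\args,\pi_1(i)^\ast V)$ to $\Phi^A_\ast$ gives a $\pi_1(A)$-cochain map in the other pair of modules. Because $\Phi^X_\ast$ and $\Phi^A_\ast$ map simplices to simplices, both dual maps have operator norm at most $1$. Commutativity of the square above, combined with naturality of $B(\args,\args)$, shows that the pair $(B(\Phi^X_\ast,V),B(\Phi^A_\ast,\pi_1(i)^\ast V))$ is a $(\pi_1(X),\pi_1(A))$-cochain map $C^\ast(\pi_1(X),\pi_1(A);V)\to C^\ast(X,A;V)$. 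Since $\Phi^X_0$ and $\Phi^A_0$ extend the identity on $\R[X]$ and $\R[A]$ respectively and the augmentation maps $\nu$ for both resolutions are defined by ``constant functions'', the induced pair extends $(\id_V,\id_{\pi_1(i)^\ast V})$.

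There is no real obstacle here; the only point deserving care is the identification in the commutative square, which in turn relies on the $\pi_1$-injectivity hypothesis to pin down $\widetilde A\subset \widetilde X$ and to ensure that $\pi_1(i)$ on morphisms is just ``view an $A$-path as an $X$-path''. Once the square commutes, norm non-increase and functoriality of $B(\args,\args)$ do the rest.
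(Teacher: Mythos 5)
Your proposal is correct and follows essentially the same route as the paper: the paper also defines the map as the pair $(B(\Phi^X_\ast,V),B(\Phi^A_\ast,\pi_1(i)^\ast V))$ with $\Phi^X_\ast,\Phi^A_\ast$ from Proposition~\ref{p:mappingthminvers} and leaves the compatibility as "easy to see". Your explicit verification of the chain-level square via $\pi_1(i)(\gamma^A_{a,b})=\gamma^X_{\widetilde i(a),\widetilde i(b)}$ is exactly the detail the paper suppresses, so nothing is missing.
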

\begin{proof}
 The map is given given by $(B(\Phi^X_\ast,V),B(\Phi^A_\ast,\pi_1(i)^\ast V))$, where the morphisms~$\Phi_{\ast}^X$ and~$\Phi_{\ast}^A$ are as in Proposition~\ref{p:mappingthminvers}. Its easy to see that this is a~$(\pi_1(X),\pi_1(A))$-map. 
\end{proof}

\begin{thm}[Relative Mapping Theorem]\label{t:relmap} Let $i\colon A\longhookrightarrow X$ be a CW-pair, such that $i$ is $\pi_1$-injective and induces isomorphisms between the higher homotopy groups on each connected component of $A$. Let $V$ be a Banach $\pi_1(X)$-module. Then there is a canonical isometric isomorphism 
\begin{align*}
 H_b^\ast(X,A;V')&\longrightarrow H_b^\ast(\pi_1(X),\pi_1(A);V').
\end{align*}
\end{thm}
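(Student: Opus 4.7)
The plan is to assemble the theorem from the relative homological algebra machinery developed earlier in the paper. The essential content has already been packaged into the comparison theorem for strong relatively injective $(\G,\A)$-resolutions (Corollary~\ref{c:relviastronginj}), together with the explicit resolutions coming from topology that have been constructed in Section~7 and the preceding part of Section~9. The argument therefore reduces to recognising that $C^\ast(X,A;V')$ is a strong, relatively injective $(\pi_1(X),\pi_1(A))$-resolution of $V'$, and then comparing it with the standard bar resolution in both directions.

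First I would invoke Proposition~\ref{p:relinjtoppair} to see that in every degree the Banach $(\pi_1(X),\pi_1(A))$-module $C^n(X,A;V')$ is relatively injective. Next, using the topological hypotheses on $i\colon A \hookrightarrow X$ (namely that it is $\pi_1$-injective and a weak equivalence on higher homotopy groups of each component of $A$), Corollary~\ref{c:strongtoppair} yields that $C^\ast(X,A;V')$ is a strong $(\pi_1(X),\pi_1(A))$-resolution of $V'$. Applying Corollary~\ref{c:relviastronginj} to this resolution then produces a canonical semi-norm non-increasing isomorphism
\[
H^\ast_b(X,A;V') = H^\ast(C^\ast(X,A;V')) \longrightarrow H^\ast_b(\pi_1(X),\pi_1(A);V').
\]
Concretely, this map is induced by the canonical lift of $(\id_{V'},\id_{\pi_1(i)^\ast V'})$ provided by Proposition~\ref{p:normnonincrpair}, going from the strong resolution $C^\ast(X,A;V')$ to the standard resolution $C^\ast(\pi_1(X),\pi_1(A);V')$.

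To upgrade ``semi-norm non-increasing isomorphism'' to ``isometric isomorphism'', I would exhibit an inverse that is itself semi-norm non-increasing. This is exactly Lemma~\ref{l:mappinginverspairs}, which provides a canonical norm non-increasing $(\pi_1(X),\pi_1(A))$-cochain map
\[
C^\ast(\pi_1(X),\pi_1(A);V') \longrightarrow C^\ast(X,A;V'),
\]
extending $(\id_{V'},\id_{\pi_1(i)^\ast V'})$, given in each degree by pre-composition with the maps $\Phi_{\ast}^{X}$ and $\Phi_{\ast}^{A}$ of Proposition~\ref{p:mappingthminvers}. Passing to cohomology yields a semi-norm non-increasing map $H^\ast_b(\pi_1(X),\pi_1(A);V') \to H^\ast_b(X,A;V')$.

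Finally I would apply the Fundamental Lemma for Pairs (Proposition~\ref{p:fundamentalpairs}): both composites of the two maps above extend the identity on $(V',\pi_1(i)^\ast V')$ and both source complexes are strong relatively injective resolutions, so by uniqueness up to bounded $(\pi_1(X),\pi_1(A))$-cochain homotopy each composite is cochain homotopic to the identity. Hence the two induced maps in bounded cohomology are mutually inverse. A map between semi-normed modules that is a bijection and whose inverse is also semi-norm non-increasing is automatically an isometry, so we obtain the desired canonical isometric isomorphism. The construction is manifestly natural, since all steps (the resolution $C^\ast(X,A;V')$, the lift from Proposition~\ref{p:normnonincrpair}, and the comparison map of Lemma~\ref{l:mappinginverspairs}) are canonical. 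The real work of the proof sits inside Corollary~\ref{c:strongtoppair}, which rests on the Frigerio--Pagliantini construction (Proposition~\ref{p:fp}) of compatible pointed cochain contractions for $\widetilde{X}$ and $\widetilde{A}$; everything else here is a clean application of the formalism.
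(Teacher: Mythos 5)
Your proposal is correct and follows essentially the same route as the paper: establish via Proposition~\ref{p:relinjtoppair} and Corollary~\ref{c:strongtoppair} that $C^\ast(X,A;V')$ is a strong, relatively injective $(\pi_1(X),\pi_1(A))$-resolution, obtain the norm non-increasing comparison map to the standard resolution (the paper builds it directly via Proposition~\ref{p:normnonincrpair}/the fundamental lemma, which is exactly what your appeal to Corollary~\ref{c:relviastronginj} unpacks to), use Lemma~\ref{l:mappinginverspairs} for the norm non-increasing map in the other direction, and conclude by the fundamental lemma for pairs that the induced maps in cohomology are mutually inverse and hence isometric. Your explicit remark that the norm non-increasing lift comes from Proposition~\ref{p:normnonincrpair} is, if anything, slightly more careful than the paper's citation at that step.
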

\begin{proof}
 By Proposition~\ref{p:relinjtoppair} and Corollary~\ref{c:strongtoppair}, the $(\pi_1(X),\pi_1(A))$-cochain complex $C^\ast(X,A;V')$ is a strong, relatively injective $(\pi_1(X),\pi_1(A))$-reso\-lution of $V'$. Therefore, by Proposition~\ref{p:fundamentalpairs}, there exists a norm non-increasing~$(\pi_1(X),\pi_1(A))$-cochain map 
\[
 \alpha^\ast\colon C^\ast(X,A;V')\longrightarrow C^\ast(\pi_1(X),\pi_1(A);V')
\]
extending $(\id_{V'},\id_{\pi_1(i)^\ast V'})$. By Lemma~\ref{l:mappinginverspairs}, there is a norm non-increasing $(\pi_1(X),\pi_1(A))$-cochain map $ C^\ast(\pi_1(X),\pi_1(A);V') \longrightarrow C^\ast(X,A;V')$ extending $(\id_{V'},\id_{\pi_1(i)^\ast V'})$. By the fundamental lemma for pairs, Proposition~\ref{p:fundamentalpairs}, these maps induce mutually inverse, norm non-increasing isomorphisms in bounded cohomology. So the map $H_b^\ast(X,A;V')\longrightarrow H_b^\ast(\pi_1(X),\pi_1(A);V')$ induced by $\alpha^\ast$ is an isometric isomorphism. Also by the fundamental lemma, this isomorphism doesn't depend on the extension of $(\id_{V'},\id_{\pi(i)^\ast V'})$. 
\end{proof}

\begin{cor}\label{c:relmappingthm}
 Let $i\colon A\longhookrightarrow X$ be a CW-pair, such that $i$ is $\pi_1$-injective and induces isomorphisms between the higher homotopy groups on each connected component of $A$. Let $V$ be a Banach $\pi_1(X)$-module. Let  $\pi_1(A)$ be amenable. Then there is a canonical isometric isomorphism 
\begin{align*}
 H_b^\ast(X,A;V')&\longrightarrow H_b^\ast(X;V').
\end{align*}
\end{cor}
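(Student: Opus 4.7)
The plan is to chain together three canonical isometric isomorphisms already established in the article. The topological hypothesis on $i$ — that it is $\pi_1$-injective and induces isomorphisms on higher homotopy groups on each connected component of $A$ — is exactly what is needed to invoke the Relative Mapping Theorem (Theorem~\ref{t:relmap}), which supplies a canonical isometric isomorphism
\[
  H_b^\ast(X,A;V') \longrightarrow H_b^\ast(\pi_1(X),\pi_1(A);V').
\]
Next, since $\pi_1(A)$ is assumed amenable, the Algebraic Mapping Theorem for pairs of groupoids (Corollary~\ref{c:algebraicmappingthm}) provides, for every $n\in\N_{\geq 2}$, a canonical isometric isomorphism
\[
  H_b^n(\pi_1(X),\pi_1(A);V') \longrightarrow H_b^n(\pi_1(X);V'),
\]
induced by the inclusion $j^\ast$ of the relative kernel complex into the absolute one. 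Finally, the Absolute Mapping Theorem for groupoids (Corollary~\ref{c:absmappingthm}) yields a canonical isometric isomorphism $H_b^\ast(X;V')\longrightarrow H_b^\ast(\pi_1(X);V')$; composing with its inverse closes the chain.

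The composite of these three maps is the desired canonical isometric isomorphism. Canonicity and isometry are automatic: each of the three constituent isomorphisms is obtained by invoking the fundamental lemma (for groupoids or for pairs) applied to explicit norm non-increasing cochain maps, so no additional choices enter, and the composition of isometries is an isometry. Naturality in the pair $(X,A)$ and in $V'$ then follows from the naturality properties built into Theorems~\ref{t:relmap},~\ref{c:algebraicmappingthm} and~\ref{c:absmappingthm}.

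The main bookkeeping obstacle is the degree restriction $n\geq 2$ inherited from the algebraic mapping theorem. In degrees $0$ and $1$ one argues via the long exact sequence of the pair $(\pi_1(X),\pi_1(A))$, using that $H_b^n(\pi_1(A);V')=0$ for all $n\geq 1$ by Corollary~\ref{c:amenablevanish}; naturality of the long exact sequence together with the absolute mapping theorem applied to both $X$ and (componentwise) to $A$ — whose hypotheses ensure that the topological and groupoid-theoretic sequences are compatible — then pins down the desired isomorphism in all remaining degrees. No genuinely new argument is required beyond the three mapping theorems cited.
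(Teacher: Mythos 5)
Your main chain of isomorphisms is exactly the paper's argument: the paper proves the corollary by placing the restriction map $H_b^n(X,A;V')\to H_b^n(X;V')$ in a commutative square whose vertical maps are the relative and absolute mapping theorems (Theorem~\ref{t:relmap} and Corollary~\ref{c:absmappingthm}) and whose bottom map is the algebraic mapping theorem (Corollary~\ref{c:algebraicmappingthm}). So in degrees $n\in\N_{\geq 2}$ your composite is correct and is essentially the same proof; the only cosmetic difference is that the paper additionally records commutativity of the square, which identifies the canonical isomorphism with the map induced by the inclusion of the relative complex, whereas your composition only produces \emph{some} canonical isometric isomorphism (which is all the statement literally asks for).

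The genuine problem is your final paragraph. The claim that the long exact sequence ``pins down the desired isomorphism in all remaining degrees'' is false: Corollary~\ref{c:amenablevanish} gives $H_b^n(\pi_1(A);V')=0$ only for $n\geq 1$, while $H_b^0(\pi_1(A);V')$ is a space of invariants and is typically nonzero. In degree $1$ the long exact sequence shows that $H_b^1(\pi_1(X),\pi_1(A);V')\to H_b^1(\pi_1(X);V')$ is surjective, but its kernel is the cokernel of the restriction $H_b^0(\pi_1(X);V')\to H_b^0(\pi_1(A);V')$, which need not vanish for general dual coefficients, so injectivity can fail; in degree $0$ the map $H_b^0(X,A;V')\to H_b^0(X;V')$ is not surjective whenever $A\neq\emptyset$ (relative $0$-cocycles vanish on every component meeting $A$). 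This is precisely the reason the algebraic mapping theorem carries the restriction $n\geq 2$, and the paper's proof makes no claim in degrees $0$ and $1$ (its diagram is degree-wise in $n$ and cites Corollary~\ref{c:algebraicmappingthm}); the corollary should be read with the same implicit restriction. Note also that even where the long exact sequence does give an isomorphism, it would only give a continuous isomorphism, not an isometric one, so it could not supply the isometry you assert. Dropping that last paragraph, or replacing it by the explicit remark that the statement is meant in degrees $\geq 2$, repairs the proposal.
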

Corollary~\ref{c:relmappingthm} was stated by Gromov~\cite{Gr82} without the assumptions on~$i$ to be~$\pi_1$-injective and to induce isomorphisms between the higher homotopy groups, but without a proof for the map to be isometric. This has been one motivation for us to study relative bounded cohomology in the groupoid setting in the first place. There is now, however, a short and beautiful proof of this stronger result by Bucher, Burger, Frigerio, Iozzi, Pagliantini and Pozzetti~\cite[Theorem 2]{Petal13}. This has also been shown independently by Kim and Kuessner~\cite[Theorem 1.2]{KK12} via multicomplexes. 
\begin{proof}The following diagram commutes:
 \begin{center}
\begin{tikzpicture}
\matrix (m) [scale = 0.5,matrix of math nodes, row sep=3.5em,
column sep=3em, text height=1.5ex, text depth=0.25ex]
{H_b^{n}(X,A;V')& H_b^{n}(X;V')\\
H_b^{n}(\pi_1(X),\pi_1(A);V')& H_b^n(\pi_1(X);V')
\\ };
\path[->]
(m-1-1) edge node[auto] {$ $}(m-1-2)
(m-1-1) edge node[left] {$\cong $}(m-2-1)
(m-2-1) edge node[below] {$\cong $}(m-2-2)
(m-1-2) edge node[auto] {$\cong $}(m-2-2)
;
\end{tikzpicture}
\end{center}
Here, the column maps are the isometric isomorphisms induced by the (topological) mapping theorem, Theorem~\ref{t:relmap}, and the row maps are induced by the canonical inclusions. The lower row map is an isometric isomorphisms by the algebraic mapping theorem, Corollary~\ref{c:algebraicmappingthm}.
\end{proof}
\begin{rem}
One important reason to consider relative bounded cohomology  is to study manifolds with boundary relative to the boundary, due to the relation between bounded cohomology and simplicial volume. Let~$(M,\partial M)$ be an aspherical manifold with $\pi_1$-injective aspherical boundary and let $N$ be a manifold without boundary. Then any $N$-bundle over~$M$ satisfies the condition of the relative mapping theorem, Corollary~\ref{c:relmappingthm}.  This gives many new examples for which the relative mapping theorem holds, since there are important examples of such pairs $(M,\partial M)$. We mention just some that are interesting with respect to the simplicial volume:
\begin{enumerate}
\item Compact aspherical 3-manifolds relative to a union of incompressible boundary components~\cite{AFW12}. 
\item The relative hyperbolisation construction of Davis, Januszkiewicz and Weinberger~\cite{DJW01} gives rise to many exotic examples. Let $X$ be a manifold with boundary $Y$ and assume that each connected component of $Y$ is aspherical. Then the relative hyperbolisation $J(X,Y)$ relative to $Y$ satisfies the assumption of Corollary~\ref{c:relmappingthm}.
\item Compact hyperbolic manifolds with totally geodesic boundary relative to the boundary~\cite[Proposition 13.1]{Bele07}.
\end{enumerate}

\end{rem}

\bibliographystyle{plain}
\bibliography{/home/matthias/Documents/Bib/refs}

\end{document}